\pgfplotsset{every tick label/.append style={font=\footnotesize}}
\pgfplotsset{label style={font=\footnotesize}}
\crefname{hypothesis}{Hypothesis}{Hypotheses}
\crefname{assumption}{assumption}{assumptions}
\definecolor{Dcolor}{rgb}{1,.2,.2}
\definecolor{NDcolor}{rgb}{.2,.2,1}
\definecolor{Rcolor}{rgb}{.2,1,.2}
\definecolor{usualcolor}{rgb}{.6,.6,.6}
\definecolor{reference}{rgb}{0,0,0}
\newcommand{\Dcolordesc}{red}
\newcommand{{\NDcolordesc}}{blue}
\newcommand{\Rcolordesc}{green}
\newcommand{\usualcolordesc}{grey}
\newcommand{\usualvariantcolordesc}{grey dashed}
\newcommand{\linlindesc}{circles}
\newcommand{\conlindesc}{squares}
\newcommand{\onedesc}{triangles}
\newcommand{\twodesc}{diamonds}
\newcommand{\threedesc}{pentagons}
\definecolor{red}{rgb}{1,0,0}
\newcommand{\Dlinlinplot  }[1][2]{\addplot [Dcolor, mark=*, thick, mark size=#1, mark options={solid,draw=black}]}
\newcommand{\Doneplot  }[1][3]{\addplot [Dcolor, mark=triangle*, thick, mark size=#1, mark options={solid,draw=black}]}
\newcommand{\Dtwoplot  }[1][3]{\addplot [Dcolor, mark=diamond*, thick, mark size=#1, mark options={solid,draw=black}]}
\newcommand{\Dthreeplot}[1][3]{\addplot [Dcolor, mark=pentagon*, thick, mark size=#1, mark options={solid,draw=black}]}
\newcommand{\Dlinlindesc}{{\Dcolordesc} \linlindesc}
\newcommand{\Donedesc}{{\Dcolordesc} \onedesc}
\newcommand{\Dtwodesc}{{\Dcolordesc} \twodesc}
\newcommand{\Dthreedesc}{{\Dcolordesc} \threedesc}
\newcommand{\NDlinlinplot }[1][2]{\addplot [NDcolor, mark=*,   thick, mark size=#1, mark options={solid,draw=black}]}
\newcommand{\NDconlinplot }[1][2]{\addplot [NDcolor, mark=square*,   thick, mark size=#1, mark options={solid,draw=black}]}
\newcommand{\NDoneplot  }[1][3]{\addplot [NDcolor, mark=triangle*, thick, mark size=#1, mark options={solid,draw=black}]}
\newcommand{\NDtwoplot  }[1][3]{\addplot [NDcolor, mark=diamond*, thick, mark size=#1, mark options={solid,draw=black}]}
\newcommand{\NDthreeplot}[1][3]{\addplot [NDcolor, mark=pentagon*, thick, mark size=#1, mark options={solid,draw=black}]}
\newcommand{\NDlinlindesc}{{\NDcolordesc} \linlindesc}
\newcommand{\NDconlindesc}{{\NDcolordesc} \conlindesc}
\newcommand{\NDonedesc}{{\NDcolordesc} \onedesc}
\newcommand{\NDtwodesc}{{\NDcolordesc} \twodesc}
\newcommand{\NDthreedesc}{{\NDcolordesc} \threedesc}
\newcommand{\RthreeD }{\addplot3[surf,Rcolor,z buffer=reverse y seq,faceted color=reference]}
\newcommand{\Roneplot  }[1][3]{\addplot [Rcolor, mark=triangle*, thick, mark size=#1, mark options={solid,draw=black}]}
\newcommand{\Rtwoplot  }[1][3]{\addplot [Rcolor, mark=diamond*, thick, mark size=#1, mark options={solid,draw=black}]}
\newcommand{\Rthreeplot}[1][3]{\addplot [Rcolor, mark=pentagon*, thick, mark size=#1, mark options={solid,draw=black}]}
\newcommand{\Ronedesc}{{\Rcolordesc} \onedesc}
\newcommand{\Rtwodesc}{{\Rcolordesc} \twodesc}
\newcommand{\Rthreedesc}{{\Rcolordesc} \threedesc}
\newcommand{\usualoneplot}[1][2]{\addplot [usualcolor, mark=triangle*, thick, mark size=#1, mark options={solid,draw=black}]}
\newcommand{\usualtwoplot}[1][2]{\addplot [usualcolor, mark=diamond*, thick, mark size=#1, mark options={solid,draw=black}]}
\newcommand{\usualonedesc}{{\usualcolordesc} \onedesc}
\newcommand{\usualtwodesc}{{\usualcolordesc} \twodesc}
\newcommand{\usualvariantoneplot}[1][2]{\addplot [usualcolor, dashed, mark=triangle*, thick, mark size=#1, mark options={solid,draw=black}]}
\newcommand{\usualvarianttwoplot}[1][2]{\addplot [usualcolor, dashed, mark=diamond*, thick, mark size=#1, mark options={solid,draw=black}]}
\newcommand{\usualvariantthreeplot}[1][2]{\addplot [usualcolor, dashed, mark=pentagon*, thick, mark size=#1, mark options={solid,draw=black}]}
\newcommand{\referenceplot}{\addplot [reference, dashed, very thick]}
\newcommand{\B}[1]{{\bm#1}}
\newcommand{\scurl}[1][]{\textup{curl}\ifthenelse{\equal{#1}{}}{}{_#1}}
\newcommand{\vcurl}[1][]{\textup{\textbf{curl}}\ifthenelse{\equal{#1}{}}{}{_#1}}
\newcommand{\sdiv}[1][]{\textup{div}\ifthenelse{\equal{#1}{}}{}{_#1}}
\newcommand{\vdiv}[1][]{\textup{\textbf{div}}\ifthenelse{\equal{#1}{}}{}{_#1}}
\newcommand{\pop}[1]{\mathcal{#1}}
\newcommand{\bop}[1]{\mathsf{#1}}
\newcommand{\popV}{\pop{V}}
\newcommand{\popK}{\pop{K}}
\newcommand{\bopV}{\bop{V}}
\newcommand{\bopK}{\bop{K}}
\newcommand{\bopKadj}{{\bop{K}'}}
\newcommand{\bopW}{\bop{W}}
\newcommand{\bopI}{\bop{Id}}
\newcommand{\trace}{\gamma}
\newcommand{\betaparam}{\beta}
\newcommand{\average}[1]{\left\{#1\right\}_\Gamma}
\newcommand{\dx}[1][x]{\,\mathrm{d}#1}
\newcommand{\RR}{\mathbb{R}}
\newcommand{\form}[1]{\mathcal{#1}}
\newcommand{\productspace}[1]{\mathbb{#1}}
\newcommand{\Th}{\mathcal{T}_h}
\renewcommand{\vec}[1]{\bm{#1}}
\newcommand{\LL}{L}
\newcommand{\Ltwo}{\LL^2}
\newcommand{\Vee}{\mathrm{V}}
\renewcommand{\emptyset}{\varnothing}
\newcommand{\norm}[1]{\| #1 \|}
\newcommand{\starnorm}[2][]{\norm{#2}_{*#1}}
\newcommand{\Vnorm}[2][]{\norm{#2}_{\productspace{V}#1}}
\newcommand{\Bnorm}[2][]{\norm{#2}_{\form{B}#1}}
\newcommand{\Ltwonorm}[2][\Gamma]{\norm{#2}_{\Ltwo(#1)}}
\newcommand{\Hnorm}[3][\Gamma]{\norm{#3}_{H^{#2}(#1)}}
\newcommand{\Hstarnorm}[3][\Gamma]{\seminorm{#3}_{H_*^{#2}(#1)}}
\newcommand{\seminorm}[1]{| #1 |}
\newcommand{\Hseminorm}[3][\Gamma]{\seminorm{#3}_{H^{#2}(#1)}}
\newcommand\x{\B{x}}
\newcommand\y{\B{y}}
\newcommand\D{_\textup{D}}
\newcommand\N{_\textup{N}}
\newcommand\ND{_\textup{ND}}
\newcommand\R{_\textup{R}}
\newcommand{\param}{\varepsilon}
\newcommand{\betamin}{\betaparam_{\min}}
\newcommand{\robindenom}{\omega}
\newcommand{\zerostar}[1]{\overset{*}{#1}\null}
\newcommand{\TODO}[1][]{{\color{red}TODO\ifthenelse{\equal{#1}{}}{}{: #1}}}
\title{Boundary element methods with weakly imposed boundary conditions.
\thanks{Submitted to the editors June 2018.}}
\author{Timo Betcke\thanks{Department of Mathematics, University College London, WC1E 6BT, UK
  (\email{t.betcke@ucl.ac.uk}).}
\and Erik Burman\footnotemark[2]\thanks{Department of Mathematics, University College London, WC1E 6BT, UK
  (\email{e.burman@ucl.ac.uk}).}
\and Matthew W. Scroggs\thanks{Department of Mathematics, University College London, WC1E 6BT, UK
  (\email{matthew.scroggs.14@ucl.ac.uk}, \url{http://www.mscroggs.co.uk}).}}
\begin{document}

\maketitle

\begin{abstract}
We consider boundary element methods where the Calder\'on projector is
used for the system matrix and boundary conditions are weakly imposed using a
particular variational boundary operator designed using techniques
from augmented Lagrangian methods. Regardless of the boundary conditions,
both the primal trace variable and the flux are approximated. We focus on the imposition of Dirichlet,
mixed Dirichlet--Neumann, and Robin conditions. A salient feature of the
Robin condition is that the conditioning of the system is robust also
for stiff boundary conditions. The theory is illustrated by a series of numerical examples.
\end{abstract}

\begin{keywords}
  boundary element methods, Nitsche's method, Robin boundary conditions, mixed boundary conditions
\end{keywords}

\begin{AMS}
    65N38, 65R20
\end{AMS}

\section{Introduction}
Weak imposition of boundary conditions has been very successful in the
context of finite element methods. In particular, Nitsche's method \cite{Nit71} has
recently received increased interest in the scientific computation
community. 
Our aim in this paper is to discuss how the idea behind this type of
method can be applied in the context of boundary element methods to
impose different types of boundary condition in a unified
framework.

Weak imposition of boundary conditions here means that neither the Dirichlet trace nor the Neumann trace is imposed exactly,
instead an $h$-dependent boundary condition is imposed that is weighted in such a way that optimal error estimates may be derived
and the exact boundary condition is recovered in the asymptotic limit.
Methods based on Nitsche's method have been succesfully utilised for boundary element method domain decomposition problems,
where they have been used to impose interface conditions at 1D interfaces between segments of 2D screens embedded in 3D space \cite{Gatica2009,Chouly2012}.
Our approach instead focusses on imposing
boundary conditions on the 2D boundary of a single domain problem through the addition of penalty terms to a general formulation written in terms of the multitrace
operator, in a similar vein to the method discussed in \cite{Babuska1973} for the finite element method.

The use of systems of boundary integral equations for problems with mixed boundary conditions is quite classical
\cite{Costabel1985,Stephan1987,Petersd1989,Petersd1990}. While these papers require the assembly of boundary operators on subsets of the boundary
mesh, the penalty method proposed in this paper requires only the addition of sparse mass matrices to the multitrace operator assembled on the
entire mesh. In addition to the greater simplicity of the resulting formulation, this method has the advantage that the sparse penalty terms
only affect the entries in the matrix for near interactions: this gives the resulting system a structure that can be utilised when
designing effective preconditioners.

This approach may not be competitive in the simple
case of pure Dirichlet or Neumann conditions due to the increase in the number of unknowns.
Therefore the main focus of this work is on more complex situations.
We will discuss the following four model cases:
\begin{enumerate}
\item non-homogeneous Dirichlet conditions,
\item non-homogeneous Neumann conditions,
\item mixed Dirichlet--Neumann boundary conditions,
\item generalised Robin conditions.
\end{enumerate}

We consider the Laplace equation: Find $u$ such that
\begin{subequations}\label{eq:Laplace}
\begin{align}
-\Delta u &=0       &&\text{in } \Omega,\\
u &= g\D            &&\text{on } \Gamma\D, \\
\frac{\partial u}{\partial\B\nu} &= g\N
                    &&\text{on } \Gamma\N,\\
\frac{\partial u}{\partial\B\nu} &= \frac{1}{\param} (g\D-u) + g\N
                    &&\text{on }\Gamma\R.\label{eq:LaplaceRobin} 
\end{align}\end{subequations}
Here $\Omega \subset \RR^3$ denotes a polyhedral domain with outward pointing normal
$\B\nu$ and boundary $\Gamma:= \Gamma\D \cup \Gamma\N \cup \Gamma\R$.
We assume for simplicity that the boundaries between $\Gamma\D$, $\Gamma\N$ and $\Gamma\R$ coincide with edges between the faces of $\Gamma$.
Whenever it is ambiguous, we will write $\B\nu_\x$ for the outward pointing normal at the point $\x$.
We assume that $g\D \in H^{1/2}(\Gamma\D \cup \Gamma\R)$ and $g\N \in
L^2(\Gamma\N \cup \Gamma\R)$. Observe that, by the Lax--Milgram lemma, there exists a unique solution
to \cref{eq:Laplace}. We assume that $u \in H^{3/2+\epsilon}(\Omega)$ for some $\epsilon>0$.


For the Robin boundary condition, we will use the ideas of Juntunen and
Stenberg \cite{JuSt09}. A salient feature of this type of imposition
of the Robin condition is that it is robust under singular
perturbations. Indeed regardless of the Robin coefficient, the
conditioning of the resulting system matrix is no worse than
for the Neumann or the Dirichlet problem.


The proposed framework is flexible and allows for the design of a range of
different methods depending on the choice of weights and residuals. We
will present a sample of possible methods with the ambition of showing
the versatility of the framework rather than claiming
that for each case the choices are optimal.

An outline of the paper is as follows. First, we review some of the
basic elements of the theory of boundary operators in \cref{sec:b_op}. Then, in \cref{sec:weak} we discuss
the design of formulations for the linear model problems in a formal setting. We propose the
corresponding boundary element methods in \cref{sec:bem} and give an
abstract analysis. The boundary elements obtained using the
formulations from \cref{sec:weak} are then shown to satisfy the
assumptions of the abstract theory.
Finally, we show some computational examples in \cref{sec:numerical}.

While the present paper focuses on weak imposition of boundary conditions through Nitsche type coupling for BEM, ultimately the goal is to 
develop a framework for complex BEM/BEM and FEM/BEM multiphysics coupling situations. Existing approaches here are often built upon FETI 
and BETI type methods \cite{Langer2003, Langer2005}. While BETI is usually formulated in terms of Steklov--Poincar\'e operators, the 
framework proposed in this paper builds directly upon Calder\'on projectors of the subdomains.

For the method proposed in the present work the multi-domain coupling will take a form similar to that using Nitsche's method in the FEM/FEM 
coupling setting of \cite{Becker03}; see also the FEM/BEM coupling of \cite{ChernovUNP} where a Nitsche's method for the coupling was 
proposed, using the Steklov-Poincar\'e operator for the BEM system.

An important application area for the presented weak imposition of boundary conditions are inverse problems with unknown boundary 
conditions. Since the boundary condition only enters through a sparse operator this can be easily updated in each step of a solver 
iteration, while the boundary integral operators only need to be computed once.
In particular, for reconstruction of the coefficient in a Robin condition (see eg \cite{Jin10} for a finite element approach and \cite{Bara16}
for a detailed analysis of the stability of this problem), the robustness with respect to the coefficient of the present method is an 
advantage.


\section{Boundary operators}\label{sec:b_op}
We define the Green's function for the Laplace operator in $\RR^3$ by
\begin{equation}
G(\x,\y) =
\frac{1}{4\uppi|\x-\y|}.
\end{equation}
In this paper, we focus on the problem in
$\RR^3$. Similar analysis can be used for problems in $\RR^2$, in which case this definition should be replaced by
$G(\x,\y) =-\log|\x-\y|/2\uppi$.

In the standard fashion (see eg \cite[chapter 6]{Stein07}), we define
the single layer potential operator, $\pop{V}:H^{-1/2}(\Gamma)\to H^1(\Omega)$,
and
the double layer potential operator, $\pop{K}:H^{1/2}(\Gamma)\to H^1(\Omega)$,
for $v\in H^{1/2}(\Gamma)$, $\mu\in H^{-1/2}(\Gamma)$, and $\x\in\Omega\setminus\Gamma$ by
\begin{align}
(\popV\mu)(\x) &:= \int_{\Gamma} G(\x,\y) \mu(\y)\dx[\y]\label{eq:single},\\
(\popK v)(\x) &:= \int_{\Gamma} \frac{\partial G(\x,\y)}{\partial\B\nu_\y} v(\y)\dx[\y]\label{eq:double}.
\end{align}
We define the space $H^1(\Delta,\Omega):=\{v\in H^1(\Omega):\Delta v\in\Ltwo(\Omega)\}$, and
then we define the Dirichlet and Neumann traces, $\trace\D:H^1(\Omega)\to H^{1/2}(\Gamma)$
and $\trace\N:H^1(\Delta,\Omega)\to H^{-1/2}(\Gamma)$, by
\begin{align}
\trace\D f(\x)&:=\lim_{\Omega\ni\y\to\x\in\Gamma}f(\y),\\
\trace\N f(\x)&:=\lim_{\Omega\ni\y\to\x\in\Gamma}\B\nu_\x\cdot\nabla f(\y).
\end{align}

We recall that if the Dirichlet and Neumann traces of a
harmonic function are known, then the potentials \cref{eq:single} and
\cref{eq:double} may be used to reconstruct the function in $\Omega$ using the
following relation.
\begin{equation}\label{eq:represent}
u = -\popK(\trace\D u) + \popV(\trace\N u).
\end{equation}

It is also known \cite[lemma 6.6]{Stein07} that for all $\mu \in H^{-1/2}(\Gamma)$, the function
\begin{equation}\label{eq:sing_rec}
u^{\popV}_\mu := \popV \mu
\end{equation}
satisfies $-\Delta u^{\popV}_\mu = 0$ and
\begin{equation}\label{eq:sing_rec_stab}
\Hnorm[\Omega]{1}{u^{\popV}_\mu} \leqslant c \Hnorm{-1/2}{\mu}.
\end{equation}
Similarly, for the double layer potential there holds \cite[lemma 6.10]{Stein07} that for all $v \in H^{1/2}(\Gamma)$, the function
\begin{equation}\label{eq:doub_rec}
u^{\popK}_v := \popK v
\end{equation}
satisfies $-\Delta u^{\popK}_v = 0$ and
\begin{equation}\label{eq:doub_rec_stab}
\Hnorm[\Omega]{1}{u^{\popK}_v} \leqslant c \Hnorm{1/2}{v}.
\end{equation}

We define
$\average{\trace\D f}$ and $\average{\trace\N f}$ 
to be the averages of the interior and exterior Dirichlet and Neumann traces of $f$.
We define the single layer, double layer, adjoint double layer, and hypersingular boundary integral operators,
$\bopV:H^{-1/2}(\Gamma)\to H^{1/2}(\Gamma)$,
$\bopK:H^{1/2}(\Gamma)\to H^{1/2}(\Gamma)$,
$\bopKadj:H^{-1/2}(\Gamma)\to H^{-1/2}(\Gamma)$, and
$\bopW:H^{1/2}(\Gamma)\to H^{-1/2}(\Gamma)$,
by
\begin{subequations}
\begin{align}
(\bopK v)(\x)&:=\average{\trace\D\popK v}(\x),&
(\bopV \mu)(\x)&:=\average{\trace\D\popV\mu}(\x),\\
(\bopW v)(\x)&:=-\average{\trace\N\popK v}(\x),&
(\bopKadj\mu)(\x)&:=\average{\trace\N\popV\mu}(\x),
\end{align}
\end{subequations}
where 
$\x\in\Gamma$,
$v\in H^{1/2}(\Gamma)$ and $\mu\in H^{-1/2}(\Gamma)$
\cite[chapter 6]{Stein07}.

The following coercivity results are known for the single layer and hypersingular operators in
$\RR^3$, where $\left\langle  \cdot, \cdot \right\rangle_\Gamma$ denotes the
$H^{1/2}(\Gamma)$--$H^{-1/2}(\Gamma)$ duality pairing.

\begin{lemma}[Coercivity of $\bopV$]\label{lemma:coercivity1}
There exists $\alpha_\bopV>0$ such that
\begin{align*}
\alpha_\bopV \Hnorm{-1/2}{\mu}^2 &\leqslant \left\langle \bopV \mu,\mu\right\rangle_{\Gamma},
&&\forall \mu \in H^{-1/2}(\Gamma).
\end{align*}
\end{lemma}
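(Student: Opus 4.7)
The plan is to represent the bilinear form $\langle \bopV\mu,\mu\rangle_\Gamma$ as the Dirichlet energy of the single layer potential over all of $\RR^3$, and then to exploit a Hardy-type inequality in three dimensions. First, for $\mu \in H^{-1/2}(\Gamma)$ I set $u := u^\popV_\mu = \popV\mu$, as in \cref{eq:sing_rec}. This function is harmonic on both sides of $\Gamma$, its Dirichlet trace is continuous across $\Gamma$ with common value $\bopV\mu$, the interior and exterior Neumann traces satisfy the jump relation
\begin{equation*}
\trace\N^- u - \trace\N^+ u = \mu,
\end{equation*}
and the explicit form of $G$ yields the decay $u(\x) = \bigO(|\x|^{-1})$, $\nabla u(\x) = \bigO(|\x|^{-2})$ as $|\x| \to \infty$.

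Next I would apply Green's first identity separately in $\Omega$ and in $(\RR^3 \setminus \overline{\Omega}) \cap B_R$ for a large ball $B_R$. Harmonicity kills the volume terms, the boundary contribution on $\partial B_R$ is $\bigO(R^{-1})$ by the decay above and so vanishes as $R \to \infty$, and using the jump relation to combine the two remaining $\Gamma$-integrals collapses everything to
\begin{equation*}
\langle \bopV\mu, \mu \rangle_\Gamma = \int_{\RR^3} |\nabla u|^2 \dx,
\end{equation*}
which already establishes nonnegativity of the form.

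The main obstacle is the matching lower bound $\int_{\RR^3} |\nabla u|^2 \dx \geqslant \alpha_\bopV \Hnorm{-1/2}{\mu}^2$. For this I would invoke the fact that in three dimensions the Beppo-Levi space of distributions vanishing at infinity with square-integrable gradient is Hilbertian under the Dirichlet seminorm alone, thanks to a Hardy inequality. Since $u$ belongs to this space, its full $H^1$ norm on $\Omega$ and on every exterior truncation is controlled by $\|\nabla u\|_{\Ltwo(\RR^3)}$, and the Neumann trace theorem applied on each side of $\Gamma$ to the harmonic $u$ yields $\Hnorm{-1/2}{\mu} \leqslant C\|\nabla u\|_{\Ltwo(\RR^3)}$. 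Combining this with the energy identity closes the argument with $\alpha_\bopV = C^{-2}$. The delicate step is controlling the exterior Neumann trace without losing information at infinity; this is precisely where the three-dimensional Hardy inequality is indispensable, and also explains why the analogous $\RR^2$ statement needs to restrict to densities of vanishing mean.
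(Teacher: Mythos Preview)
Your argument is correct and is essentially the classical proof of $H^{-1/2}$-ellipticity of the single layer operator: the energy identity $\langle\bopV\mu,\mu\rangle_\Gamma=\int_{\RR^3}|\nabla u|^2\dx$ together with the trace control of $\mu=[\trace\N u]_\Gamma$ by the global Dirichlet energy, the latter requiring the three-dimensional Hardy/Beppo--Levi machinery that you correctly flag as the crux.

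Note, however, that the paper does not actually prove this lemma; its entire proof is the one-line citation \cite[theorem 6.22]{Stein07}. So there is no ``paper's own proof'' to compare against beyond observing that your sketch is precisely the argument carried out in that reference (and in the related literature by Costabel, N\'ed\'elec, and others). In that sense you have reconstructed what the paper outsources.
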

\begin{proof}\cite[theorem 6.22]{Stein07}.\end{proof}

\begin{lemma}[Coercivity of $\bopW$]\label{lemma:coercivity2}
There exists $\alpha_\bopW>0$ such that
\begin{align*}
\alpha_\bopW \Hnorm{1/2}{v}^2 &\leqslant \left\langle\bopW v,v\right\rangle_{\Gamma},
&&\forall v \in H_*^{1/2}(\Gamma),
\end{align*}
where $H_*^{1/2}(\Gamma)$ denotes the set of functions
$v\in H^{1/2}(\Gamma)$ such that $\overline{v} = 0$,
where $\displaystyle\overline{v}:=\frac{\langle v,1\rangle_\Gamma}{\langle1,1\rangle_\Gamma}$ is the average value of $v$.
From this it follows that
\begin{align*}
\alpha_\bopW \Hstarnorm{1/2}{v}^2 &\leqslant \left\langle\bopW v,v\right\rangle_{\Gamma},
&&\forall v \in H^{1/2}(\Gamma),
\end{align*}
where $\Hstarnorm{1/2}{\cdot}$ is defined, for $v\in H^{1/2}(\Gamma)$, by 
$\displaystyle\Hstarnorm{1/2}{v}:=\Hnorm{1/2}{v-\overline{v}}$.
\end{lemma}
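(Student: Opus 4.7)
The plan is to reduce the second inequality to the first by splitting $v\in H^{1/2}(\Gamma)$ into its mean value and its mean-zero part. Writing $\tilde v:=v-\overline{v}$, one has by construction $\overline{\tilde v}=0$, so $\tilde v\in H_*^{1/2}(\Gamma)$, and moreover $\Hnorm{1/2}{\tilde v}=\Hstarnorm{1/2}{v}$ by the very definition of the starred seminorm. Thus the first inequality applied to $\tilde v$ already gives the right-hand side; the work lies in showing $\langle\bopW v,v\rangle_\Gamma=\langle\bopW \tilde v,\tilde v\rangle_\Gamma$.

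For this I would invoke two standard properties of the hypersingular operator (both found in the same chapter of Steinbach cited throughout the section): first, constants lie in the kernel, $\bopW 1=0$, which immediately yields
\begin{equation*}
\bopW v=\bopW(\tilde v+\overline{v})=\bopW \tilde v;
\end{equation*}
and second, $\bopW$ is symmetric with respect to the duality pairing, so
\begin{equation*}
\langle\bopW v,1\rangle_\Gamma=\langle v,\bopW 1\rangle_\Gamma=0,
\end{equation*}
which gives $\langle\bopW v,v\rangle_\Gamma=\langle\bopW v,v-\overline{v}\rangle_\Gamma=\langle\bopW v,\tilde v\rangle_\Gamma$.

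Combining the two observations,
\begin{equation*}
\langle\bopW v,v\rangle_\Gamma=\langle\bopW \tilde v,\tilde v\rangle_\Gamma\geqslant \alpha_\bopW\Hnorm{1/2}{\tilde v}^2=\alpha_\bopW\Hstarnorm{1/2}{v}^2,
\end{equation*}
where the inequality is the already-established first part applied to $\tilde v\in H_*^{1/2}(\Gamma)$. This closes the argument. The only potential obstacle is the invocation of $\bopW 1=0$ and the symmetry of $\bopW$; both are classical and would simply be cited, so the proof is essentially a two-line reduction rather than a genuinely new estimate.
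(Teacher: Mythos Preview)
Your argument is correct. The paper's own proof is simply a citation to \cite[theorem 6.24]{Stein07} with no further detail, so you have in fact supplied the reduction from the first inequality to the second that the phrase ``From this it follows that'' leaves implicit; your use of $\bopW 1=0$ and the symmetry of $\bopW$ is exactly the standard way to see this.
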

\begin{proof}
\cite[theorem 6.24]{Stein07}.
\end{proof}

The following boundedness results are also known.

\begin{lemma}[Boundedness]\label{lemma:stability}
There exist $C_\bopV,C_\bopK,C_\bopKadj,C_\bopW>0$ such that
\begin{align*}
\text{i)}&&\Hnorm{1/2}{\bopV \mu} &\leqslant C_\bopV \Hnorm{-1/2}{\mu} &&\forall \mu \in H^{-1/2}(\Gamma),\\
\text{ii)}&&\Hnorm{1/2}{\bopK v} &\leqslant C_\bopK \Hnorm{1/2}{v} &&\forall v \in H^{1/2}(\Gamma),\\
\text{iii)}&&\Hnorm{-1/2}{\bopKadj \mu} &\leqslant C_\bopKadj \Hnorm{-1/2}{\mu} &&\forall \mu \in H^{-1/2}(\Gamma),\\
\text{iv)}&&\Hnorm{-1/2}{\bopW v} &\leqslant C_\bopW \Hnorm{1/2}{v} &&\forall v \in H^{1/2}(\Gamma).
\end{align*}
\end{lemma}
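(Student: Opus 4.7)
The plan is to derive each of the four bounds by composing two ingredients already available in the excerpt: the $H^1(\Omega)$-stability of the potential operators recalled in \cref{eq:sing_rec_stab,eq:doub_rec_stab} (together with an exterior analogue), and the continuity of the Dirichlet and Neumann trace maps applied to the single and double layer potentials.

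For (i) and (ii), I would start from the definitions $\bopV\mu = \average{\trace\D\popV\mu}$ and $\bopK v = \average{\trace\D\popK v}$ and use the triangle inequality to reduce the task to bounding the interior and exterior Dirichlet traces separately. For the interior contribution, I would combine continuity of the Dirichlet trace, $\Hnorm{1/2}{\trace\D w} \leqslant c\Hnorm[\Omega]{1}{w}$, with \cref{eq:sing_rec_stab} in the case of (i) and with \cref{eq:doub_rec_stab} in the case of (ii). For the exterior contribution, I would invoke the standard analogue on a bounded shell $B\setminus\overline\Omega$ with $B$ a large ball containing $\Omega$ (or, equivalently, the bound in appropriate weighted Sobolev spaces), which yields the same control of the exterior trace by the $H^{-1/2}$- or $H^{1/2}$-norm of the boundary datum.

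For (iii) and (iv), I would similarly decompose $\bopKadj\mu = \average{\trace\N\popV\mu}$ and $\bopW v = -\average{\trace\N\popK v}$ into averages of interior and exterior Neumann traces. The essential point is that both potentials are harmonic (as recalled after \cref{eq:sing_rec} and \cref{eq:doub_rec}), so they lie in $H^1(\Delta,\Omega)$ with $\Delta u = 0$, and the $H^1(\Delta,\Omega)$-norm reduces to the plain $H^1(\Omega)$-norm. Continuity of the Neumann trace, $\trace\N : H^1(\Delta,\Omega) \to H^{-1/2}(\Gamma)$, combined with \cref{eq:sing_rec_stab} (for \textit{iii}) and \cref{eq:doub_rec_stab} (for \textit{iv}) and their exterior counterparts, then gives the claimed bounds with some constants $C_\bopKadj$ and $C_\bopW$.

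The main technical obstacle is supplying the exterior stability estimate: \cref{eq:sing_rec_stab,eq:doub_rec_stab} as stated are interior bounds, whereas the averaged traces require analogous control of $\popV\mu$ and $\popK v$ in $\RR^3\setminus\overline\Omega$. This is classical; it amounts to working with exterior weighted Sobolev spaces that account for the $\bigO(|\x|^{-1})$ decay of the Laplace kernel at infinity, and is proved in the same chapter of \cite{Stein07} that we have already cited. Once this ingredient is in place, each of the four estimates is a one-line consequence of a trace inequality composed with a potential-stability bound.
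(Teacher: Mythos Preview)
Your argument is correct and is, in fact, more detailed than what the paper itself provides: the paper's ``proof'' of this lemma is merely the citation \cite[sections 6.2--6.5]{Stein07}, with no argument given. Your sketch---compose the $H^1$-stability of the layer potentials with continuity of the Dirichlet and Neumann traces, handling interior and exterior contributions separately---is precisely one of the standard routes to these bounds, and is in the spirit of how Steinbach derives them.

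Two minor remarks. First, for $\bopV$ and $\bopW$ the averaging is not strictly necessary: the Dirichlet trace of $\popV\mu$ and the Neumann trace of $\popK v$ are continuous across $\Gamma$, so a one-sided estimate already suffices. Your uniform treatment via averages is still valid, just slightly more than needed in those two cases. Second, you correctly flag the exterior estimate as the only nontrivial ingredient not explicitly stated in the excerpt; this is indeed proved in the same chapter of \cite{Stein07} that the paper cites, so there is no gap.
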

\begin{proof}
\cite[sections 6.2--6.5]{Stein07}.
\end{proof}

We define the Calder\'on projector by
\begin{equation}\label{eq:calder}
\bop{C}:=
\begin{pmatrix}
(1-\sigma)\bopI-\bopK & \bopV\\
\bopW & \sigma\bopI+\bopKadj
\end{pmatrix},
\end{equation}
where $\sigma$ is defined as in \cite[equation 6.11]{Stein07},
and recall that if $u$ is a solution of \cref{eq:Laplace} then it satisfies
\begin{equation}\label{eq:calder_id}
\bop{C}\begin{pmatrix}\trace\D u\\\trace\N u\end{pmatrix}=\begin{pmatrix}\trace\D u\\\trace\N u\end{pmatrix}.
\end{equation}

Taking the product of \cref{eq:calder_id} with two test functions, and using the fact that $\sigma=\frac12$ almost everywhere,
we arrive at the following equations.
\begin{align}
\left\langle \trace\D u,\mu \right\rangle_{\Gamma} &= \left\langle (\tfrac12\bopI - \bopK)
  \trace\D u,\mu\right\rangle_{\Gamma} + \left\langle \bopV\trace\N
  u,\mu\right\rangle_{\Gamma} &&\forall \mu \in H^{-1/2}(\Gamma),\label{eq:Calderon_1}\\
\left\langle \trace\N u,v \right\rangle_{\Gamma} &= \left\langle (\tfrac12\bopI + \bopKadj)
  \trace\N u,v \right\rangle_{\Gamma} + \left\langle \bopW \trace\D
  u,v\right\rangle_{\Gamma} &&\forall v \in H^{1/2}(\Gamma).\label{eq:Calderon_2}
\end{align}

For a more compact notation, we introduce $\lambda=\trace\N u$ and $u=\trace\D u$ and the Calder\'on form
\begin{multline}\label{eq:compact_form}
\form{C}[(u,\lambda),(v,\mu)]:=
\left\langle(\tfrac12\bopI-\bopK)
 u,\mu\right\rangle_{\Gamma} + \left\langle \bopV\lambda,\mu\right\rangle_{\Gamma}\\
+
\left\langle (\tfrac12\bopI + \bopKadj)
  \lambda,v \right\rangle_{\Gamma} + \left\langle \bopW u,v\right\rangle_{\Gamma}.
\end{multline}
We may then rewrite \cref{eq:Calderon_1} and \cref{eq:Calderon_2} as
\begin{equation}\label{eq:realtion}
\form{C}[(u,\lambda),(v,\mu)]=\left\langle u,\mu\right\rangle_\Gamma+\left\langle\lambda,v\right\rangle_\Gamma.
\end{equation}

We will also frequently use the multitrace form, defined by
\begin{equation}\label{eq:mult_trace}
\form{A}[(u,\lambda),(v,\mu)]:=
-\left\langle\bopK
 u,\mu\right\rangle_{\Gamma} + \left\langle \bopV\lambda,\mu\right\rangle_{\Gamma}
+
\left\langle \bopKadj
  \lambda,v \right\rangle_{\Gamma} + \left\langle \bopW u,v\right\rangle_{\Gamma}.
\end{equation}
Using this, we may rewrite \cref{eq:realtion} as 
\begin{equation}\label{eq:skewsym_relation}
\form{A}[(u,\lambda),(v,\mu)]=\tfrac12\left\langle u,\mu\right\rangle_\Gamma+\tfrac12\left\langle\lambda,v\right\rangle_\Gamma.
\end{equation}

To quantify the two traces we introduce the product space 
\[
\productspace{V} := \begin{cases}
H^{1/2}(\Gamma) \times H^{-1/2}(\Gamma)&\text{if }\Gamma\N\cup\Gamma\R=\emptyset,\\
H^{1/2}(\Gamma) \times \Ltwo(\Gamma)&\text{otherwise.}
\end{cases}
\]
The additional regularity on the flux variable is required later when imposing Neumann and Robin conditions.
We also introduce the associated norm
\[
\Vnorm{(v,\mu)}:= \Hnorm{1/2}{v}+\Hnorm{-1/2}{\mu}.
\]

Using the results in \cref{lemma:coercivity1,lemma:coercivity2,lemma:stability},
we obtain the continuity and coercivity of $\form{A}$.
\begin{lemma}[Continuity]\label{lemma:cont_cald}
There exists $C>0$ such that
\begin{align*}
\left|\form{A}[(w,\eta),(v,\mu)]\right| &\leqslant C \Vnorm{(w,\eta)}\Vnorm{(v,\mu)}&&\forall(w,\eta),(v,\mu)\in\productspace{V}.
\end{align*}
\end{lemma}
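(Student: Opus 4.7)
The plan is to prove continuity term-by-term, treating $\form{A}$ as a sum of four duality pairings and applying the boundedness estimates from \cref{lemma:stability} to each one.

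First, I would note that for any $f \in H^{1/2}(\Gamma)$ and $g \in H^{-1/2}(\Gamma)$, the duality pairing satisfies the standard bound
\[
|\langle f, g\rangle_\Gamma| \leqslant \Hnorm{1/2}{f}\Hnorm{-1/2}{g}.
\]
Applying this to each of the four terms in \cref{eq:mult_trace}, and then using parts (i)--(iv) of \cref{lemma:stability} to control the operator on the left of each pairing, I would obtain
\begin{align*}
|\langle \bopK w, \mu\rangle_\Gamma| &\leqslant C_\bopK \Hnorm{1/2}{w}\Hnorm{-1/2}{\mu}, \\
|\langle \bopV \eta, \mu\rangle_\Gamma| &\leqslant C_\bopV \Hnorm{-1/2}{\eta}\Hnorm{-1/2}{\mu}, \\
|\langle \bopKadj \eta, v\rangle_\Gamma| &\leqslant C_\bopKadj \Hnorm{-1/2}{\eta}\Hnorm{1/2}{v}, \\
|\langle \bopW w, v\rangle_\Gamma| &\leqslant C_\bopW \Hnorm{1/2}{w}\Hnorm{1/2}{v}.
\end{align*}

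Summing these four estimates via the triangle inequality and factoring, the right-hand side is bounded by $C(\Hnorm{1/2}{w}+\Hnorm{-1/2}{\eta})(\Hnorm{1/2}{v}+\Hnorm{-1/2}{\mu}) = C\,\Vnorm{(w,\eta)}\Vnorm{(v,\mu)}$, with $C = \max(C_\bopK,C_\bopV,C_\bopKadj,C_\bopW)$ (or simply the sum of the four constants).

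One small subtlety to check is that the estimate is valid regardless of the case in the definition of $\productspace{V}$: when $\Gamma\N \cup \Gamma\R \neq \emptyset$ the flux lives in $\Ltwo(\Gamma)$, but since $\Ltwo(\Gamma) \hookrightarrow H^{-1/2}(\Gamma)$ continuously, the $H^{-1/2}$ component of $\Vnorm{\cdot}$ is well-defined and the bounds above go through unchanged. I do not expect any real obstacle here: the continuity is essentially a direct corollary of \cref{lemma:stability} combined with the duality pairing, and no density or approximation argument is required.
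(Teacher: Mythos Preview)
Your proof is correct and follows exactly the approach indicated in the paper, which simply says ``Use the stability results from \cref{lemma:stability}.'' Your term-by-term application of the duality bound together with the boundedness constants $C_\bopV,C_\bopK,C_\bopKadj,C_\bopW$ is precisely what is intended, and your remark about the embedding $\Ltwo(\Gamma)\hookrightarrow H^{-1/2}(\Gamma)$ is a correct handling of the two cases in the definition of $\productspace{V}$.
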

\begin{proof}
Use the stability results from \cref{lemma:stability}.
\end{proof}

\begin{lemma}[Coercivity]\label{lemma:coerciv_cald}
There exists $\alpha>0$ such that
\begin{align*}
\alpha\left(
\Hstarnorm{1/2}{v}^2+\Hnorm{-1/2}{\mu}^2
\right)
&\leqslant
\form{A}[(v,\mu),(v,\mu)]
&&\forall(v,\mu)\in\productspace{V}.
\end{align*}
\end{lemma}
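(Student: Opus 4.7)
The plan is to test $\form{A}$ against $(v,\mu)$ itself, exploit the adjointness of $\bopK$ and $\bopKadj$ to kill the off-diagonal terms, and then invoke the coercivity results of \cref{lemma:coercivity1,lemma:coercivity2} on the two remaining diagonal terms.

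Concretely, first I would evaluate
\[
\form{A}[(v,\mu),(v,\mu)] = -\langle\bopK v,\mu\rangle_\Gamma + \langle\bopV\mu,\mu\rangle_\Gamma + \langle\bopKadj\mu,v\rangle_\Gamma + \langle\bopW v,v\rangle_\Gamma.
\]
Since $\bopKadj$ is the formal adjoint of $\bopK$ with respect to the $H^{1/2}(\Gamma)$--$H^{-1/2}(\Gamma)$ duality pairing (this is standard, cf.\ \cite[chapter 6]{Stein07}), we have $\langle\bopK v,\mu\rangle_\Gamma = \langle v,\bopKadj\mu\rangle_\Gamma = \langle\bopKadj\mu,v\rangle_\Gamma$. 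Hence the two double-layer contributions cancel, leaving
\[
\form{A}[(v,\mu),(v,\mu)] = \langle\bopV\mu,\mu\rangle_\Gamma + \langle\bopW v,v\rangle_\Gamma.
\]

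Next I would apply \cref{lemma:coercivity1} to bound $\langle\bopV\mu,\mu\rangle_\Gamma \geqslant \alpha_\bopV\Hnorm{-1/2}{\mu}^2$ and \cref{lemma:coercivity2} (in the form given for the full space $H^{1/2}(\Gamma)$ using the $\Hstarnorm{1/2}{\cdot}$ seminorm) to bound $\langle\bopW v,v\rangle_\Gamma \geqslant \alpha_\bopW\Hstarnorm{1/2}{v}^2$. Setting $\alpha := \min(\alpha_\bopV,\alpha_\bopW)$ yields the claimed inequality.

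There is essentially no obstacle here once the cancellation of the off-diagonal terms is observed; the only point requiring mild care is ensuring that the duality pairing is being used symmetrically (so that $\langle\bopK v,\mu\rangle_\Gamma$ and $\langle\bopKadj\mu,v\rangle_\Gamma$ really do coincide and cancel), and noting that the coercivity of $\bopW$ only controls the $\Hstarnorm{1/2}{\cdot}$ seminorm rather than the full $H^{1/2}$ norm, which is exactly why the statement is phrased with the starred seminorm on the $v$-component.
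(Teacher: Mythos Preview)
Your proof is correct and follows essentially the same approach as the paper's. The paper's proof is a single sentence (``Use the coercivity of $\bopV$ and $\bopW$ from \cref{lemma:coercivity1,lemma:coercivity2} and let $\alpha=\min(\alpha_\bopW,\alpha_\bopV)$'') and leaves the cancellation of the off-diagonal $\bopK$/$\bopKadj$ terms implicit; you have simply made that step explicit, which is appropriate.
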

\begin{proof}
Use the coercivity of $\bopV$ and $\bopW$ from \cref{lemma:coercivity1,lemma:coercivity2} and let $\alpha=\min(\alpha_\bopW,\alpha_\bopV)$.
\end{proof}


\section{Weak Imposition of boundary conditions}\label{sec:weak}
In this section, we will derive boundary integral formulations of the
problem \cref{eq:Laplace}, that we will then use for our boundary element formulations.
We assume that the boundary condition may be written as 
\begin{equation}
R_\Gamma(u,\lambda) = 0.
\end{equation}

The idea that we will exploit in the following is simply to add a suitable weighted weak form of
this constraint to the Calder\'on form
\cref{eq:realtion}. Formally, this leads to an expression of the
form
\begin{equation}\label{eq:cald_bc}
\form{C}[(u,\lambda),(v,\mu)]=\left\langle u,\mu
\right\rangle_\Gamma+\left\langle \lambda, v\right\rangle_{\Gamma} +
\left\langle R_\Gamma(u,\lambda), \betaparam_1 v + \betaparam_2 \mu \right\rangle_{\Gamma},
\end{equation}
or equivalently
\begin{equation}\label{eq:multi_bc}
\form{A}[(u,\lambda),(v,\mu)]=\tfrac12\left\langle u,\mu
\right\rangle_\Gamma+\tfrac12\left\langle \lambda, v\right\rangle_{\Gamma} +
\left\langle R_\Gamma(u,\lambda), \betaparam_1 v + \betaparam_2 \mu \right\rangle_{\Gamma},
\end{equation}
where $\betaparam_1$ and $\betaparam_2$ are problem dependent scaling operators that will be chosen as a
function of the physical parameters in order to obtain robustness of
the method.


\subsection{Dirichlet boundary condition}\label{sec:dirichlet}
In this section, we assume that $\Gamma\D\equiv\Gamma$ and consider the resulting Dirichlet problem.
We choose $\betaparam_1 = \betaparam\D^{1/2}$, $\betaparam_2 = \betaparam\D^{-1/2}$, where $\betaparam\D$ will
be identified with a mesh-dependent penalty parameter, and
\begin{equation}\label{eq:Dir_R}
R_{\Gamma\D}(u,\lambda) := \betaparam\D^{1/2} (g\D-u)
\end{equation}
where $g\D \in H^{1/2}(\Gamma)$ is the Dirichlet data.

Inserting this into \cref{eq:multi_bc}, we obtain the formulation
\begin{multline}\label{eq:multi_D}
\form{A}[(u,\lambda),(v,\mu)] -\tfrac12 \left\langle  \lambda, v
\right\rangle_{\Gamma\D} + \tfrac12
\left\langle u, \mu \right\rangle_{\Gamma\D} +\left\langle \betaparam\D  u, v 
\right\rangle_{\Gamma\D} = \left\langle g\D, \betaparam\D v + \mu
\right\rangle_{\Gamma\D}.
\end{multline}
One can compare the method with the classical (non-symmetric) Nitsche's method
by formally identifying $\lambda$ with $\partial_\B\nu u$ and $\mu$ with
$\partial_\B\nu v$ (up to the multiplicative factor $\tfrac12$).

For a more compact notation, we introduce the boundary operator
associated with the non-homogeneous Dirichlet condition
\begin{equation}\label{eq:operator_Dirichlet}
\form{B}\D[(u,\lambda),(v,\mu)]:=-\tfrac12\left\langle  \lambda, v \right\rangle_{\Gamma\D} +
\tfrac12\left\langle u, \mu \right\rangle_{\Gamma\D} +\left\langle \betaparam\D u, v 
\right\rangle_{\Gamma\D},
\end{equation}
and the operator associated with the right hand side
\begin{equation}\label{eq:LDir} 
\form{L}\D(v,\mu) := \left\langle  g\D,\betaparam\D v + \mu\right\rangle_{\Gamma\D}.
\end{equation}

Using these and \cref{eq:multi_D}, we arrive at the following problem: Find
$(u,\lambda) \in \productspace{V}$ such that
\begin{align}\label{eq:abstract_form_Dir}
\form{A}[(u,\lambda),(v,\mu)]+\form{B}\D[(u,\lambda),(v,\mu)] &=
\form{L}\D(v,\mu)&&\forall(v,\mu) \in \productspace{V}.
\end{align}

If we set $\betaparam\D=0$ in \cref{eq:operator_Dirichlet,eq:LDir}, we obtain a penalty-free formulation for the Dirichlet problem.


\subsection{Neumann boundary condition}\label{sec:neumann}
In this section, we assume that $\Gamma\N\equiv\Gamma$ and consider the resulting Neumann problem.
We choose $\betaparam_1 = \betaparam\N^{-1/2}$, $\betaparam_2 = \betaparam\N^{1/2}$, and define
\begin{equation}\label{eq:Neu_R}
R_{\Gamma\N}(u,\lambda) := \betaparam\N^{1/2} (g\N - \lambda),
\end{equation}
where $g\N \in \Ltwo(\Gamma\N)$, with $\int_\Gamma g\N=0$, is the Neumann data.

Proceeding as in the Dirichlet case, we obtain the formulation
\begin{multline}\label{eq:multi_N}
\form{A}[(u,\lambda),(v,\mu)] -\tfrac12 \left\langle  u, \mu
\right\rangle_{\Gamma\N} + \tfrac12
\left\langle \lambda, v \right\rangle_{\Gamma\N} +\left\langle \betaparam\N  \lambda, \mu 
\right\rangle_{\Gamma\N} = \left\langle g\N, \betaparam\N \mu + v
\right\rangle_{\Gamma\N}.
\end{multline}
Defining
\begin{align}\label{eq:operator_Neumann}
\form{B}\N[(u,\lambda),(v,\mu)]&:=
-\tfrac12\left\langle u, \mu \right\rangle_{\Gamma\N}
+\tfrac12\left\langle  \lambda, v \right\rangle_{\Gamma\N}
+\left\langle \betaparam\N\lambda,\mu\right\rangle_{\Gamma\N},\\
\label{eq:LNeu} 
\form{L}\N(v,\mu) &:= \left\langle  g\N,\betaparam\N \mu + v\right\rangle_{\Gamma\N},
\end{align}
we may write this as the variational problem: 
Find $(u,\lambda) \in \zerostar{\productspace{V}}$ such that
\begin{align}\label{eq:abstract_form_Neu}
\form{A}[(u,\lambda),(v,\mu)]+\form{B}\N[(u,\lambda),(v,\mu)] &=
\form{L}\N(v,\mu)&&\forall(v,\mu) \in \zerostar{\productspace{V}}.
\end{align}
Here, we use the space $\zerostar{\productspace{V}}:=H_*^{1/2}(\Gamma\N)\times\Ltwo(\Gamma\N)$, as the solution to the Neumann problem can only be
determined up to a constant, so we include the extra condition that $\overline{u}=0$.

If we set $\betaparam\N=0$ in \cref{eq:operator_Neumann,eq:LNeu}, we obtain a penalty-free formulation for the Neumann problem.
In this case, we may take
$\zerostar{\productspace{V}}=H_*^{1/2}(\Gamma\N)\times H^{-1/2}(\Gamma\N)$ and
$g\N\in H^{-1/2}(\Gamma\N)$.

When $\betaparam\N>0$, observe that for the terms imposing the Neumann condition to be well
defined, we need $\lambda \in \Ltwo(\Gamma\N)$. This can be avoided by
replacing $\betaparam\N$
with a regularising operator $\bop{R}:H^{-1/2}(\Gamma\N) \to H^{1/2}(\Gamma\N)$. For example, 
we could take $\bop{R}=\betaparam_\bopV \bopV$, where
$\betaparam_\bopV \in \RR$ and $\bopV$ is the single layer boundary operator on $\Gamma\N$.
This formulation with the operator $\bop{R}$ is given in \cite[(3.10) and (3.11)]{Steinbach2011}, where it was derived
using a domain decomposition approach where a Robin condition was used to weakly impose a Neumann condition.

The resulting formulations using $\betaparam\N$ are in general easier to analyse, since
they give control of $\lambda$ on the Neumann boundary in the natural
norm $\Hnorm[\Gamma\N]{-1/2}{\lambda}$.


\subsection{Mixed Dirichlet--Neumann boundary condition}\label{sec:mixed_neumann_dirichlet}
We now consider the case of mixed Dirichlet--Neumann boundary conditions, when $\Gamma = \Gamma\D \cup \Gamma\N$. We note that in this 
case, and in the Robin case, we take $\productspace{V} = H^{1/2}(\Gamma) \times \Ltwo(\Gamma)$.

Let $R_{\Gamma\D}$ and $R_{\Gamma\N}$ be defined by
\cref{eq:Dir_R} and \cref{eq:Neu_R}.
Using the abstract form \cref{eq:multi_bc}, we obtain
\begin{multline}\label{eq:mixed_bc}
\form{A}[(u,\lambda),(v,\mu)]=\tfrac12\left\langle u,\mu
\right\rangle_\Gamma+\tfrac12\left\langle \lambda, v\right\rangle_{\Gamma} \\
+
\left\langle R_{\Gamma\D}(u,\lambda), \betaparam\D^{1/2} v + \betaparam\D^{-1/2} \mu
\right\rangle_{\Gamma\D} + \left\langle R_{\Gamma\N}(u,\lambda), \betaparam\N^{-1/2} v + \betaparam\N^{1/2} \mu
\right\rangle_{\Gamma\N}.
\end{multline}
Developing \cref{eq:mixed_bc}, and defining
\begin{multline}\label{eq:operator_Mixed}
\form{B}\ND[(u,\lambda),(v,\mu)] :=
\tfrac12\left\langle u, \mu \right\rangle_{\Gamma\D}
    - \tfrac12\left\langle\lambda, v \right\rangle_{\Gamma\D}
    + \left\langle\betaparam\D u, v \right\rangle_{\Gamma\D} \\
+
\tfrac12\left\langle\lambda, v \right\rangle_{\Gamma\N}
    - \tfrac12\left\langle u, \mu \right\rangle_{\Gamma\N} 
    + \left\langle\betaparam\N \lambda, \mu \right\rangle_{\Gamma\N},
\end{multline}
\begin{equation}\label{eq:LMix}
\form{L}\ND(v,\mu) := \left\langle g\D, \betaparam\D v + \mu
\right\rangle_{\Gamma\D}+ \left\langle  g\N,\betaparam\N\mu + v\right\rangle_{\Gamma\N},
\end{equation}
we arrive the variational formulation: Find $(u,\lambda) \in
\productspace{V}$
such that
\begin{align}\label{eq:MND_compact}
\form{A}[(u,\lambda),(v,\mu)] +
\form{B}\ND[(u,\lambda),(v,\mu)] &= \form{L}\ND(v,\mu)
&&\forall(v,\mu) \in
\productspace{V}.
\end{align}

If we set $\betaparam\D=0$ and $\betaparam\N=0$ in \cref{eq:operator_Mixed,eq:LMix}, we obtain a penalty-free formulation for the mixed Dirichlet--Neumann problem.
By taking $\Gamma\N=\emptyset$ or $\Gamma\D=\emptyset$, formulations for both Dirichlet and Neumann problems can be obtained from \cref{eq:MND_compact}.


\subsection{Robin conditions}
For simplicity, we consider the case where $\Gamma=\Gamma\R$.
Considering the Robin condition \cref{eq:LaplaceRobin}, we may write, for some $\param>0$,
\begin{equation}
R_{\Gamma\R}(u,\lambda) := \betaparam\R^{1/2}\left(\param^{1/2}(g\N-\lambda) + \param^{-1/2}(g\D-u)\right).
\end{equation}
This function is a linear combination of the Dirichlet and the Neumann conditions.
\begin{equation}\label{eq:Robin_bc}
R_{\Gamma\R}(u,\lambda) = \alpha\D R_{\Gamma\D}(u,\lambda) + \alpha\N R_{\Gamma\N}(u,\lambda),
\end{equation}
where $\alpha\N=\betaparam\R^{1/2}\betaparam\N^{-1/2}\param^{1/2}$ and $\alpha\D=\betaparam\R^{1/2}\betaparam\D^{-1/2}\param^{-1/2}$.

We take $\betaparam_1=\betaparam\R^{1/2}$ and $\betaparam_2=\betaparam\R^{-1/2}$, and look for a term of the form
\begin{equation}
\left\langle \phi R_{\Gamma\R}(u,\lambda) ,
  \betaparam\R^{1/2} v + \betaparam\R^{-1/2} \mu \right\rangle_{\Gamma\R},
\end{equation}
where the $\phi$ and $\betaparam\R$ must have the following properties to ensure that 
the formulation degenerates into the formulation for the Dirichlet and Neumann problems as $\param\to0$ and $\param\to\infty$.
\begin{align*}
&\betaparam\R\to\betaparam\D,
&&\alpha\D \phi \to 1,
&&\text{and}
&&\alpha\N \phi \to 0
&&\ 
&&\text{as }\param\to 0,\\
&\betaparam\R\to\betaparam\N^{-1},
&&\alpha\N \phi \to 1,
&&\text{and}
&&\alpha\D \phi \to 0
&&\ 
&&\text{as }\param\to \infty.
\end{align*}

It is straightforward to verify that these conditions are satisfied
for the choices
\begin{align}
\phi&:= \frac{\param^{1/2}}{\param\betaparam\R + 1},\\
\betaparam\R&:=\frac{\param\betaparam\N^{-1}+\betaparam\D}{\param+1}.
\end{align}
Later, we will use 
$\betaparam\D=\betaparam h^{-1}$
and
$\betaparam\N=\betaparam h$,
where $\betaparam$ is a constant, as in the mixed Dirichlet--Neumann case.

Collecting the above considerations, we arrive at the formulation
\begin{multline}\label{eq:cald_Rob}
\form{A}[(u,\lambda),(v,\mu)] =
\tfrac12\left\langle u,\mu \right\rangle_{\Gamma}+\tfrac12\left\langle \lambda, v\right\rangle_{\Gamma} \\
+ \left\langle
    \param(g\N-\lambda) + (g\D-u),
    \frac{\betaparam\R}{\param \betaparam\R+1} v +  \frac{1}{\param \betaparam\R+1} \mu
\right\rangle_{\Gamma\R}.
\end{multline}
Taking $\param\to0$, we recover the Dirichlet formulation \cref{eq:multi_D};
and taking $\param \to \infty$ results in the Neumann formulation  \cref{eq:multi_N}.

By introducing 
\begin{multline*}
\form{B}\R[(u,\lambda),(v,\mu)]:=
\frac12\left\langle \frac{\param
    \betaparam\R -1}{\param
    \betaparam\R +1} \lambda, v\right\rangle_{\Gamma\R}
- \frac12\left\langle \frac{\param
    \betaparam\R -1}{\param
    \betaparam\R +1} u, \mu\right\rangle_{\Gamma\R} \\
+\left\langle \frac{\param}{\param
    \betaparam\R +1} \lambda, \mu \right\rangle_{\Gamma\R}
+\left\langle \frac{\betaparam\R}{\param
    \betaparam\R +1} u,v\right\rangle_{\Gamma\R} 
\end{multline*}
and
\[
\form{L}\R(v,\mu)
:=
\left\langle g\D+ \param g\N, \frac{\betaparam\R}{\param \betaparam\R +1} v  +
  \frac{1}{\param \betaparam\R +1} \mu\right\rangle_{\Gamma\R},
\]
we may write this as the variational problem: Find $(u,\lambda) \in\productspace{V}$ such that
\begin{align}\label{eq:cald_Rob_compact}
\form{A}[(u,\lambda),(v,\mu)] +
\form{B}\R[(u,\lambda),(v,\mu)] &= \form{L}\R(v,\mu) &&\forall
(v,\mu) \in\productspace V.
\end{align}


\section{Boundary element method for the single domain problem}\label{sec:bem}
All the methods introduced above are written as the sum of the multitrace operator $\form{A}$ and a
boundary condition operator $\form{B}$. We write this generally as: Find
$(u,\lambda) \in \productspace{V}$ such that
\begin{align}\label{eq:abstract_form}
\form{A}[(u,\lambda),(v,\mu)]+\form{B}[(u,\lambda),(v,\mu)] &=
\form{L}(v,\mu) &&\forall (v,\mu) \in \productspace{V}.
\end{align}
In this section, we analyse this general problem, then show that the analysis is applicable to the
boundary conditions discussed in \cref{sec:weak}.

For the sake of example and to fix the ideas, we introduce a family of conforming, shape regular triangulations of
$\Gamma$, $\{\Th\}_{h>0}$, indexed by the largest element
diameter of the mesh, $h$.
We assume that the triangulations are fitted to the different
boundary sets $\Gamma\D$, $\Gamma\R$ and $\Gamma\N$.
We then consider the following finite element spaces.
\begin{align*}
\Vee^k_h &:= \{v_h \in C^0(\Gamma): v_h\vert_{T} \in \mathbb{P}_k(T) \text{, for every }T\in\Th\},\\
\Lambda^l_h &:= \{v_h \in \Ltwo(\Gamma): v_h\vert_{T} \in \mathbb{P}_l(T) \text{, for every }T\in\Th\},\\
\tilde \Lambda^l_h &:= \{v_h \in \Lambda^l_h: v_h\vert_{\Gamma_i} \in
C^0(\Gamma_i)\text{, for }i=1,\hdots,M\},
\end{align*}
where $\mathbb{P}_k(T)$ denotes the space of polynomials of order less
than or equal to $k$,
and $\{\Gamma_i\}_{i=1}^M$ are the polygonal faces of $\Gamma$.

We observe that $\Vee^k_h \subset H^{1/2}(\Gamma)$, $\Lambda^l_h
\subset \Ltwo(\Gamma)$ and $\tilde \Lambda^l_h
\subset \Ltwo(\Gamma)$. We now introduce the discrete product
space $\productspace{V}_h :=  \Vee^k_h \times \Lambda^l_h$. The space $\tilde
\Lambda^l_h$ may be used in the place of $\Lambda^l_h$ without any modifications of the arguments below.

The boundary element formulation of the generic problem \cref{eq:abstract_form} then takes the form: Find
$(u_h,\lambda_h) \in  \productspace{V}_h$ such that
\begin{align}\label{BEMform}
\form{A}[(u_h,\lambda_h),(v_h,\mu_h)] +
\form{B}[(u_h,\lambda_h),(v_h,\mu_h)] &= \form{L}(v_h,\mu_h)
&&\forall (v_h, \mu_h) \in \productspace{V}_h.
\end{align}

If we assume that
$(u,\lambda) \in \productspace{V}$ and $(u_h,\lambda_h) \in \productspace{V}_h$
satisfy
\cref{eq:abstract_form} and \cref{BEMform},
it immediately follows that the following Galerkin orthogonality relation holds.
\begin{multline}\label{galortho}
\form{A}[(u-u_h,\lambda-\lambda_h),(v_h,\mu_h)] +
\form{B}[(u-u_h,\lambda-\lambda_h),(v_h,\mu_h)] = 0
\\
\forall (v_h, \mu_h) \in \productspace{V}_h.
\end{multline}
We also get the following representation formula for the approximation in the
bulk using \cref{eq:represent}.
\begin{equation}\label{eq:rec_disc}
\tilde u_h = -\popK u_h + \popV\lambda_h.
\end{equation}
We will now proceed to derive some estimates for the solution of
\cref{BEMform} and the reconstruction \cref{eq:rec_disc}.

Let $\productspace{W}$ be a product Hilbert space for the primal and flux
variables, such that $\productspace{W} \subset \productspace{V}$.
Let $\Bnorm{\cdot}$ be a norm defined on $\productspace{W}$, such that for all $(v,\mu)\in\productspace{W}$,
$\Bnorm{(v,\mu)} \geqslant \Vnorm{(v,\mu)}$.

To reduce the number of constants that appear, especially when proving that \cref{a:approx} holds, we introduce the following notation.
\begin{itemize}
\item If $\exists C>0$, independent of $h$, such that $a\leqslant Cb$, then we write $a\lesssim b$.
\item If $a\lesssim b$ and $b\lesssim a$, then we write $a\eqsim b$.
\end{itemize}
For the abstract analysis, we will make use of the following standard assumptions.

\begin{assumption}[Weak coercivity]\label{a:coercive}
There exists $\alpha>0$ such that $\forall (v,\mu) \in \productspace{W}$
\[\alpha \Bnorm{(v,\mu)} \leqslant \sup_{(w,\eta) \in \productspace{W}\setminus\{0\}}\frac{\form{A}[(v,\mu),(w,\eta)] +
\form{B}[(v,\mu),(w,\eta)]}{\Bnorm{(w,\eta)}},\]
and $\forall (w,\eta) \in \productspace{W}\setminus\{0\}$
\[\sup_{(v,\mu) \in \productspace{W}}\left|\form{A}[(v,\mu),(w,\eta)] +
\form{B}[(v,\mu),(w,\eta)]\right|>0.\]
\end{assumption}

\begin{assumption}[Discrete coercivity]\label{a:discoercive}
There exists $\alpha>0$ such that $\forall (v_h,\mu_h) \in \productspace{V}_h$
\[\alpha \Bnorm{(v_h,\mu_h)} \leqslant \sup_{(w_h,\eta_h) \in \productspace{V}_h\setminus\{0\}}\frac{\form{A}[(v_h,\mu_h),(w_h,\eta_h)] +
\form{B}[(v_h,\mu_h),(w_h,\eta_h)]}{\Bnorm{(w_h,\eta_h)}},\]
and $\forall (w_h,\eta_h) \in \productspace{V}_h\setminus\{0\}$
\[\sup_{(v_h,\mu_h) \in \productspace{V}_h}\left|\form{A}[(v_h,\mu_h),(w_h,\eta_h)] +
\form{B}[(v_h,\mu_h),(w_h,\eta_h)]\right|>0.\]
\end{assumption}

\begin{assumption}[Continuity]\label{a:continuity}
There exists an auxiliary norm $\starnorm{(v,\mu)}$ defined on $\productspace{W}$, and
there exists
$M>0$ such that $\forall(w,\eta),(v,\mu) \in \productspace{W}$
\[\left|\form{A}[(w,\eta),(v,\mu)] +
\form{B}[(w,\eta),(v,\mu)]\right| \leqslant M\starnorm{(w,\eta)}\Bnorm{(v,\mu)}
\]
\end{assumption}

\begin{assumption}[Approximation]\label{a:approx}
$\forall (v,\mu) \in H^s(\Gamma)\times H^r(\Gamma)$,
\begin{equation*}
\inf_{(w_h,\eta_h) \in \productspace{V}_h}
    \starnorm{(v-w_h,\mu - \eta_h)}
\lesssim h^{\zeta-1/2} \Hseminorm{\zeta}{v} +
h^{\xi+1/2} \Hseminorm{\xi}{\mu},
\end{equation*}
where $\zeta = \min(k+1,s)$, $\xi = \min(l+1,r)$,
$s\geqslant\frac12$ and $r\geqslant-\frac12$.
\end{assumption}

Typically, we use approximation spaces with $k=l+1$, where the polynomial spaces used for $\lambda$ are one order lower than those for $u$,
or spaces with $k=l$, where equal order spaces are used for both variables.

We note that if the form $\form{A}+\form{B}$ is coercive,
that is there exists $\alpha>0$ such that $\forall (v,\mu) \in \productspace{W}$
\[\alpha \Bnorm{(v,\mu)}^2 \leqslant \form{A}[(v,\mu),(v,\mu)] + \form{B}[(v,\mu),(v,\mu)],\]
then \cref{a:coercive,a:discoercive} hold.

We now proceed to prove some results about the abstract problem.
\begin{proposition}\label{prop:exist_uniqueness}
Assume that \cref{a:coercive} holds, then the linear system defined by \cref{BEMform} is invertible.
If, in addition, we assume that
\begin{itemize}
\item\cref{a:continuity} holds,
\item there exists $L>0$ such that $\form{L}(w,\eta) \leqslant L \Bnorm{(w,\eta)}\quad\forall(w,\eta) \in \productspace{W}$,
\item and $\starnorm{\cdot}$ is equivalent to $\Bnorm{\cdot}$,
\end{itemize}
then the formulation \cref{eq:abstract_form} admits a unique solution
in $\productspace{W}$.
\end{proposition}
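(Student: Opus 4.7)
The plan is to recognise this proposition as a standard Banach--Nečas--Babuška (BNB) result, split into a discrete invertibility claim and a continuous unique solvability claim. Both parts rest on viewing $\form{A}+\form{B}$ as a bilinear form satisfying inf-sup conditions and continuity, with $\form{L}$ as a bounded linear functional.

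For the first statement, since \cref{BEMform} is a square linear system in the finite-dimensional space $\productspace{V}_h$, it suffices to prove injectivity of the associated matrix. I would take $(u_h,\lambda_h) \in \productspace{V}_h$ satisfying the homogeneous equation
\begin{equation*}
\form{A}[(u_h,\lambda_h),(v_h,\mu_h)] + \form{B}[(u_h,\lambda_h),(v_h,\mu_h)] = 0 \quad \forall (v_h,\mu_h) \in \productspace{V}_h,
\end{equation*}
and invoke the discrete inf-sup inequality (i.e.\ \cref{a:discoercive}, which is the finite-dimensional counterpart of \cref{a:coercive} needed for the discrete system) to deduce $\alpha \Bnorm{(u_h,\lambda_h)} \leqslant 0$ and hence $(u_h,\lambda_h) = 0$. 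Since injectivity of a square matrix is equivalent to invertibility, the claim follows.

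For the second statement, I would apply the BNB theorem on the Hilbert space $\productspace{W}$ equipped with the norm $\Bnorm{\cdot}$. Three ingredients must be checked. First, continuity of $\form{A}+\form{B}$ on $\productspace{W}\times\productspace{W}$: \cref{a:continuity} gives the bound $M\starnorm{(w,\eta)}\Bnorm{(v,\mu)}$, and the assumed equivalence $\starnorm{\cdot}\eqsim\Bnorm{\cdot}$ upgrades this to continuity in $\Bnorm{\cdot}$ in both arguments. Second, the inf-sup condition in the first argument, which is precisely the first inequality in \cref{a:coercive}. Third, the non-degeneracy in the second argument, which is the second condition of \cref{a:coercive}. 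Combined with the boundedness of $\form{L}$ on $(\productspace{W},\Bnorm{\cdot})$, BNB then yields a unique $(u,\lambda)\in\productspace{W}$ solving \cref{eq:abstract_form}.

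The argument is essentially bookkeeping and there is no serious technical obstacle; the only subtle point is to handle carefully the role of the auxiliary norm $\starnorm{\cdot}$ in \cref{a:continuity} and to make explicit the (tacit) fact that, in finite dimensions, the discrete weak coercivity is what is actually needed for the first part. Once the norm equivalence is used to absorb $\starnorm{\cdot}$ into $\Bnorm{\cdot}$, the hypotheses line up one-for-one with those of the BNB theorem, so both existence and uniqueness follow immediately.
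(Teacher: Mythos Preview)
Your proposal is correct and follows essentially the same route as the paper: both reduce the result to the Babu\v{s}ka--Lax--Milgram (BNB) theorem, using \cref{a:coercive} for the inf-sup and non-degeneracy conditions, the norm equivalence to turn \cref{a:continuity} into continuity in $\Bnorm{\cdot}$, and the assumed bound on $\form{L}$. Your write-up is in fact more careful than the paper's, which compresses everything into a one-line citation; in particular your observation that the \emph{discrete} inf-sup (\cref{a:discoercive}) is what actually gives invertibility of \cref{BEMform} is a point the paper's statement and proof leave implicit.
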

\begin{proof}
Note that \cref{a:coercive} implies the inf-sup condition,
\begin{align}
\inf_{(v,\mu)\in\productspace{W}\setminus\{0\}}\sup_{(w,\eta)\in\productspace{W}\setminus\{0\}}
\frac{\form{A}[(v,\mu),(w,\eta)] + \form{B}[(v,\mu),(w,\eta)]}{
\Bnorm{(v,\mu)}\Bnorm{(w,\eta)}
}>0
.
\end{align}
Therefore we may apply the Babu\v{s}ka--Lax--Milgram theorem \cite[theorem 5.2.1]{AzizBab}.
\end{proof}
\begin{proposition}\label{prop:best_approximation}
Assume that $(u,\lambda) \in \productspace{V}$ is the solution to a boundary
value problem of the form \cref{eq:Laplace} satisfying the
abstract form \cref{eq:abstract_form}. Let $(u_h,\lambda_h) \in
\productspace{V}_h$ be the solution of \cref{BEMform}.
If \cref{a:discoercive,a:continuity} are
satisfied
then 
\begin{equation}\label{eq:best}
\Bnorm{(u-u_h,\lambda-\lambda_h)} \leqslant\frac{M}\alpha
\inf_{(v_h,\mu_h)\in \productspace{V}_h}
\starnorm{(u-v_h,\lambda-\mu_h)}.
\end{equation}
\end{proposition}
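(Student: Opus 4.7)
The plan is to run a standard Strang-type argument: control the discrete part of the error via the discrete inf--sup condition of Assumption \ref{a:discoercive}, transfer responsibility from the exact error to an arbitrary interpolant via Galerkin orthogonality \eqref{galortho}, apply continuity, and conclude with the triangle inequality before taking an infimum.

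More concretely, I would first fix an arbitrary $(v_h,\mu_h) \in \productspace{V}_h$ and split
\[
(u-u_h,\lambda-\lambda_h) = (u-v_h,\lambda-\mu_h) + (v_h-u_h,\mu_h-\lambda_h),
\]
so that the second summand lies in the discrete space. Applying Assumption \ref{a:discoercive} to $(v_h-u_h,\mu_h-\lambda_h)$, there exists $(w_h,\eta_h) \in \productspace{V}_h \setminus \{0\}$ with
\[
\alpha\,\Bnorm{(v_h-u_h,\mu_h-\lambda_h)} \leqslant \frac{\form{A}[(v_h-u_h,\mu_h-\lambda_h),(w_h,\eta_h)] + \form{B}[(v_h-u_h,\mu_h-\lambda_h),(w_h,\eta_h)]}{\Bnorm{(w_h,\eta_h)}}.
\]
By Galerkin orthogonality \eqref{galortho}, we may replace $(v_h-u_h,\mu_h-\lambda_h)$ in the first argument by $(v_h-u,\mu_h-\lambda)$ without changing the value of the numerator, and then the continuity assumption \ref{a:continuity} gives
\[
\alpha\,\Bnorm{(v_h-u_h,\mu_h-\lambda_h)} \leqslant M\,\starnorm{(u-v_h,\lambda-\mu_h)}.
\]

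Finally, I would combine this with the triangle inequality on the decomposition above to bound $\Bnorm{(u-u_h,\lambda-\lambda_h)}$ in terms of $\starnorm{(u-v_h,\lambda-\mu_h)}$ (using that $\starnorm{\cdot}$ dominates $\Bnorm{\cdot}$ on $\productspace{W}$, which is implicit in the way continuity is stated), and then pass to the infimum over $(v_h,\mu_h) \in \productspace{V}_h$ to obtain \eqref{eq:best}. There is no real obstacle here beyond bookkeeping; the only subtle point is checking that the constant $M/\alpha$ in \eqref{eq:best} is attainable (as opposed to $1+M/\alpha$), which relies on the relationship between the two norms $\Bnorm{\cdot}$ and $\starnorm{\cdot}$ and is the one place to be careful when translating the generic Strang estimate into the precise constant stated.
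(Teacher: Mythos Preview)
The paper does not actually give an argument here; it simply cites \textit{[Xu, theorem 2]}, the Xu--Zikatanov quasi-optimality result. Your Strang-type argument is structurally correct and would be a perfectly acceptable proof, but the point you flag at the end is exactly where it diverges from the cited result: the triangle inequality step yields the constant $1 + M/\alpha$, and no mere relationship between $\Bnorm{\cdot}$ and $\starnorm{\cdot}$ will absorb the extra $1$. The sharp constant $M/\alpha$ comes instead from rewriting $u - u_h = (I - P_h)(u - v_h)$ for the Galerkin projection $P_h$ and invoking Kato's identity $\|I - P_h\| = \|P_h\|$ for non-trivial idempotents together with the bound $\|P_h\| \leqslant M/\alpha$; this bypasses the triangle inequality entirely. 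Note that this projection-norm argument requires $\starnorm{\cdot} = \Bnorm{\cdot}$, which is indeed how $\starnorm{\cdot}$ is chosen in every application in the paper except the mixed Dirichlet--Neumann case; there the two norms are explicitly inequivalent, so strictly speaking only your $1 + M/\alpha$ bound is available, though only the order of the estimate matters for the subsequent corollaries.
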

\begin{proof}
See \cite[theorem 2]{Xu}.
\end{proof}

\begin{corollary}\label{col:error_est}
Let $(u,\lambda) \in H^s(\Gamma) \times H^r(\Gamma)$, for some $s\geqslant\frac12$ and $r\geqslant-\frac12$, satisfy the
abstract form \cref{eq:abstract_form}.
Under the assumptions of \cref{prop:best_approximation} and \cref{a:approx},
\[
\Bnorm{(u-u_h,\lambda-\lambda_h)} \lesssim
h^{\zeta-1/2} \Hseminorm{\zeta}{u}
+ h^{\xi+1/2}\Hseminorm{\xi}{\lambda},
\]
where $\zeta = \min(k+1,s)$ and $\xi = \min(l+1,r)$.
\end{corollary}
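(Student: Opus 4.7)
The plan is that this corollary is a direct composition of the two preceding results: the quasi-best approximation estimate from Proposition~\ref{prop:best_approximation} and the approximation property of Assumption~\ref{a:approx}. No new geometric or analytic idea is required; the argument reduces to chaining two inequalities and absorbing the constants $M/\alpha$ into the $\lesssim$ notation.

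First I would invoke Proposition~\ref{prop:best_approximation}, which under Assumptions~\ref{a:discoercive} and~\ref{a:continuity} yields
\[
\Bnorm{(u-u_h,\lambda-\lambda_h)} \leqslant \frac{M}{\alpha}\inf_{(v_h,\mu_h)\in\productspace{V}_h}\starnorm{(u-v_h,\lambda-\mu_h)}.
\]
Since the hypotheses of \cref{prop:best_approximation} are stated as part of the corollary, this step applies verbatim.

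Next I would bound the infimum on the right-hand side using Assumption~\ref{a:approx}, which is valid because $(u,\lambda)\in H^s(\Gamma)\times H^r(\Gamma)$ with $s\geqslant\tfrac12$ and $r\geqslant-\tfrac12$ as required. This gives
\[
\inf_{(v_h,\mu_h)\in\productspace{V}_h}\starnorm{(u-v_h,\lambda-\mu_h)} \lesssim h^{\zeta-1/2}\Hseminorm{\zeta}{u} + h^{\xi+1/2}\Hseminorm{\xi}{\lambda},
\]
with the prescribed exponents $\zeta=\min(k+1,s)$ and $\xi=\min(l+1,r)$. Combining the two displays and absorbing $M/\alpha$ into the hidden constant (since both $M$ and $\alpha$ are independent of $h$) yields the claimed estimate.

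There is no real obstacle here; the step that required genuine work was already carried out in \cref{prop:best_approximation}, namely establishing the quasi-best approximation bound from discrete coercivity and continuity. The present corollary is simply the concrete instantiation of that bound once one inserts the algebraic rate of convergence provided by the finite element spaces $\Vee^k_h$ and $\Lambda^l_h$.
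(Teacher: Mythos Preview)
Your proposal is correct and matches the paper's own proof exactly: the paper simply says to apply \cref{a:approx} to the right-hand side of the best-approximation bound \cref{eq:best} from \cref{prop:best_approximation}. There is nothing to add.
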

\begin{proof}
Apply \cref{a:approx} to the right hand side of \cref{eq:best}.
\end{proof}

\begin{proposition}\label{prop:bulk_error}
Assume that $(u,\lambda) \in \productspace{V}$ is the solution to a boundary
value problem of the form \cref{eq:Laplace} satisfying the
abstract form \cref{eq:abstract_form} and that the assumptions of
\cref{prop:best_approximation} are satisfied. Let $(u_h,\lambda_h) \in
\productspace{V}_h$. Let $\tilde u:\Omega \to \RR$ be the
reconstruction obtained using \cref{eq:represent}, with $\trace\N
u=\lambda$ and $\trace\D u=u$; and $\tilde u_h:\Omega \to \RR$
be the reconstruction obtained using \cref{eq:rec_disc}. Then there
holds
\[
\Hnorm[\Omega]{1}{\tilde u-\tilde u_h} \lesssim \frac{M}{\alpha} \inf_{v_h,\mu_h
  \in \productspace{V}_h} \starnorm{(u-v_h,\lambda-\mu_h)}.
\]
\end{proposition}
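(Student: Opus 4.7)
The plan is to combine the linearity of the representation formula with stability of the single and double layer potentials, and then apply the best-approximation estimate already proved in \cref{prop:best_approximation}.

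First I would write the error in the bulk reconstruction directly. Using \cref{eq:represent} and \cref{eq:rec_disc}, and the linearity of the layer potentials, I would note that
\[
\tilde u - \tilde u_h = -\popK(u-u_h) + \popV(\lambda-\lambda_h).
\]
Applying the triangle inequality in $H^1(\Omega)$ together with the potential stability bounds \cref{eq:sing_rec_stab} and \cref{eq:doub_rec_stab} (which give $\Hnorm[\Omega]{1}{\popV\mu}\lesssim \Hnorm{-1/2}{\mu}$ and $\Hnorm[\Omega]{1}{\popK v}\lesssim \Hnorm{1/2}{v}$), yields
\[
\Hnorm[\Omega]{1}{\tilde u - \tilde u_h}
\lesssim \Hnorm{1/2}{u-u_h} + \Hnorm{-1/2}{\lambda-\lambda_h}
= \Vnorm{(u-u_h,\lambda-\lambda_h)}.
\]

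Next, I would use the hypothesis on the $\Bnorm{\cdot}$-norm from the beginning of the section, namely $\Vnorm{(v,\mu)} \leqslant \Bnorm{(v,\mu)}$ for all $(v,\mu)\in\productspace{W}$, to pass from the $\productspace{V}$-norm of the trace error to the $\form{B}$-norm. This gives
\[
\Hnorm[\Omega]{1}{\tilde u - \tilde u_h}
\lesssim \Bnorm{(u-u_h,\lambda-\lambda_h)}.
\]
The result then follows immediately by invoking \cref{prop:best_approximation}, whose hypotheses \cref{a:discoercive,a:continuity} are in force, to bound the right-hand side by $(M/\alpha)\inf_{(v_h,\mu_h)\in\productspace{V}_h}\starnorm{(u-v_h,\lambda-\mu_h)}$.

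There is no real obstacle here: the argument is purely a chain of inequalities once one has the potential bounds \cref{eq:sing_rec_stab} and \cref{eq:doub_rec_stab}, the comparison of $\Vnorm{\cdot}$ and $\Bnorm{\cdot}$, and the abstract best-approximation result. The only point requiring mild care is that the trace error $(u-u_h,\lambda-\lambda_h)$ needs to lie in $\productspace{W}$ so that the norm comparison applies; this is implicit in the hypotheses of \cref{prop:best_approximation} which assume that both $(u,\lambda)$ and $(u_h,\lambda_h)$ live in that space.
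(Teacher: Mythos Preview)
Your proposal is correct and follows essentially the same route as the paper: write the bulk error as a difference of layer potentials via linearity, apply the triangle inequality and the stability bounds \cref{eq:sing_rec_stab} and \cref{eq:doub_rec_stab} to obtain $\Hnorm[\Omega]{1}{\tilde u-\tilde u_h}\lesssim\Vnorm{(u-u_h,\lambda-\lambda_h)}\leqslant\Bnorm{(u-u_h,\lambda-\lambda_h)}$, and then invoke \cref{prop:best_approximation}. The paper's proof is identical up to notation (it writes $u^{\popV}_\lambda-u^{\popV}_{\lambda_h}$ and $u^{\popK}_u-u^{\popK}_{u_h}$ for the two pieces rather than appealing to linearity explicitly).
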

\begin{proof}
Using \cref{eq:sing_rec} and \cref{eq:doub_rec}, we may write
\[
\tilde u - \tilde u_h = (u^\popV_\lambda - u^\popV_{\lambda_h})+(u^\popK_u - u^\popK_{u_h}).
\]
Using the triangle inequality, we have
\begin{equation}\label{eq:triangle}
\Hnorm[\Omega]{1}{\tilde u-\tilde u_h} \leqslant \Hnorm[\Omega]{1}{u^\popV_\lambda - u^\popV_{\lambda_h}}+\Hnorm[\Omega]{1}{u^\popK_u - u^\popK_{u_h}}.
\end{equation}
By \cref{eq:sing_rec_stab} and \cref{eq:doub_rec_stab}, there exist $c_1,c_2>0$ such that
\begin{align}
\Hnorm[\Omega]{1}{u^\popV_\lambda - u^\popV_{\lambda_h}} &\leqslant c_1\Hnorm{-1/2}{\lambda - \lambda_h},\label{eq:staberr1}\\
\Hnorm[\Omega]{1}{u^\popK_u- u^\popK_{u_h}} &\leqslant c_2\Hnorm{1/2}{u - u_h}.\label{eq:staberr2}
\end{align}
Collecting \cref{eq:triangle,eq:staberr1,eq:staberr2}, we see that there exists $C>0$ such that
\begin{equation}\label{eq:final_line_of_proof}
\Hnorm[\Omega]{1}{\tilde u-\tilde u_h} \leqslant C\Vnorm{\lambda - \lambda_h,u-u_h}\leqslant C\Bnorm{\lambda - \lambda_h,u-u_h}.
\end{equation}
The statement now follows from \cref{prop:best_approximation}.
\end{proof}

\begin{corollary}\label{cor:bulk_est}
Under the same assumptions of \cref{prop:bulk_error} and \cref{a:approx},
\[
\Hnorm[\Omega]{1}{\tilde u-\tilde u_h} \lesssim
h^{\zeta-1/2} \Hseminorm{\zeta}{u}
+ h^{\xi+1/2}\Hseminorm{\xi}{\lambda},
\]
where $\zeta = \min(k+1,s)$ and $\xi = \min(l+1,r)$.
\end{corollary}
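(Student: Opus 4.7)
The statement is a direct corollary obtained by chaining two results already established in the abstract framework, so the plan is simply to compose them. The plan is to start from the conclusion of \cref{prop:bulk_error}, which under the stated hypotheses gives
\[
\Hnorm[\Omega]{1}{\tilde u-\tilde u_h} \lesssim \inf_{(v_h,\mu_h)\in \productspace{V}_h}\starnorm{(u-v_h,\lambda-\mu_h)},
\]
where the constant absorbs the ratio $M/\alpha$ from continuity and discrete coercivity. This reduces the problem entirely to estimating the best-approximation quantity on the right in the auxiliary norm $\starnorm{\cdot}$.

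Next I would invoke \cref{a:approx} with $(v,\mu)=(u,\lambda)\in H^{s}(\Gamma)\times H^{r}(\Gamma)$, which yields
\[
\inf_{(w_h,\eta_h)\in \productspace{V}_h}\starnorm{(u-w_h,\lambda-\eta_h)}
\lesssim h^{\zeta-1/2}\Hseminorm{\zeta}{u} + h^{\xi+1/2}\Hseminorm{\xi}{\lambda},
\]
with $\zeta=\min(k+1,s)$ and $\xi=\min(l+1,r)$, which are exactly the exponents stated in the corollary. Chaining these two inequalities and absorbing all $h$-independent constants into the $\lesssim$ symbol gives the claimed bound.

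There is no real obstacle here: the hypotheses of \cref{prop:bulk_error} already package the continuity/coercivity machinery, and \cref{a:approx} is assumed directly, so the argument is mechanical. The only thing to be mildly careful about is that the regularity indices $s,r$ for $(u,\lambda)$ are at least $\tfrac12$ and $-\tfrac12$ respectively, which is guaranteed by the standing assumption $u\in H^{3/2+\epsilon}(\Omega)$ (so $\trace\D u\in H^{1+\epsilon}(\Gamma)\subset H^{1/2}(\Gamma)$ and $\trace\N u\in H^{\epsilon}(\Gamma)\subset H^{-1/2}(\Gamma)$), ensuring \cref{a:approx} is applicable. The proof therefore reduces to a one-line composition which can be stated as: \emph{apply \cref{a:approx} to the right-hand side of the estimate provided by \cref{prop:bulk_error}}.
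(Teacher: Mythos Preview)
Your proposal is correct and follows essentially the same route as the paper: the paper's proof is the one-liner ``apply \cref{a:approx} to \cref{eq:final_line_of_proof} in the proof of \cref{prop:bulk_error}'', which amounts precisely to chaining the best-approximation bound from \cref{prop:bulk_error} with \cref{a:approx}, exactly as you do. Your added remark on the regularity indices is a helpful sanity check but not strictly needed, since the corollary already inherits the hypotheses of \cref{prop:bulk_error} and \cref{a:approx}.
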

\begin{proof}
Apply \cref{a:approx} to \cref{eq:final_line_of_proof} in the proof of \cref{prop:bulk_error}.
\end{proof}

\subsection{Application of the theory to the Dirichlet problem}\label{sec:bem_dirichlet}
For the finite element spaces defined above, the Dirichlet problem
takes the form: Find
$(u_h,\lambda_h) \in\productspace{V}_h$ such that
\begin{align}\label{BEMform_Dir}
\form{A}[(u_h,\lambda_h),(v_h,\mu_h)] +
\form{B}\D[(u_h,\lambda_h),(v_h,\mu_h)] &= \form{L}\D(v_h,\mu_h)
&&\forall(v_h,\mu_h) \in \productspace{V}_h.
\end{align}

We introduce the following $\form{B}\D$-norm.
\[
\Bnorm[\D]{(v,\mu)} := \Vnorm{(v,\mu)} + \betaparam\D^{1/2} \Ltwonorm[\Gamma\D]{v},
\]
and we let $\starnorm{\cdot}=\Bnorm[\D]{\cdot}$.
We now proceed to verify that \cref{a:coercive,a:discoercive,a:continuity,a:approx} hold.

\begin{proposition}[Coercivity]\label{diri_coercive}
\Cref{a:coercive,a:discoercive} are satisfied for the Dirichlet problem if $\exists\betamin>0$, independent of $h$,
such that $\betaparam\D>\betamin$.
\end{proposition}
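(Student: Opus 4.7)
The plan is to establish full coercivity of $\form{A}+\form{B}\D$ with respect to $\Bnorm[\D]{\cdot}$, since as the paper remarks just before the proposition, full coercivity implies both \cref{a:coercive,a:discoercive}. In particular, the argument is identical for the continuous and discrete settings because it only relies on algebraic manipulations plus the coercivity lemmas for $\bopV$ and $\bopW$, not on any inf-sup pairing between distinct spaces.

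First, I would test the form on a diagonal element $(v,\mu)$ and collect terms. In $\form{A}[(v,\mu),(v,\mu)]$ the off-diagonal contributions are $-\langle \bopK v,\mu\rangle_\Gamma+\langle \bopKadj\mu,v\rangle_\Gamma$, which vanish because $\bopKadj$ is by definition the adjoint of $\bopK$ with respect to the $H^{1/2}$--$H^{-1/2}$ duality pairing. Similarly, in $\form{B}\D[(v,\mu),(v,\mu)]$ the skew-symmetric pair $-\tfrac12\langle \mu,v\rangle_{\Gamma\D}+\tfrac12\langle v,\mu\rangle_{\Gamma\D}$ cancels. What remains is the clean identity
\[
\form{A}[(v,\mu),(v,\mu)] + \form{B}\D[(v,\mu),(v,\mu)] = \langle \bopV\mu,\mu\rangle_\Gamma + \langle \bopW v,v\rangle_\Gamma + \betaparam\D \Ltwonorm[\Gamma]{v}^2.
\]

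Next, I would apply \cref{lemma:coercivity1,lemma:coercivity2} (recalling $\Gamma\D\equiv\Gamma$) to bound this below by $\alpha_\bopV \Hnorm{-1/2}{\mu}^2 + \alpha_\bopW \Hstarnorm{1/2}{v}^2 + \betaparam\D \Ltwonorm[\Gamma]{v}^2$. The one delicate point — what I would flag as the main obstacle — is that \cref{lemma:coercivity2} only controls the $H_*^{1/2}$ semi-norm of $v$, not the full $H^{1/2}(\Gamma)$ norm, because $\bopW$ annihilates constants. I would handle this by splitting $v=(v-\overline{v})+\overline{v}$ and observing that $\Hnorm{1/2}{v}^2\lesssim \Hstarnorm{1/2}{v}^2+|\overline{v}|^2\lesssim \Hstarnorm{1/2}{v}^2 + \Ltwonorm[\Gamma]{v}^2$ by Cauchy--Schwarz applied to $\overline{v}=\langle v,1\rangle_\Gamma/\langle 1,1\rangle_\Gamma$. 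The $\Ltwo(\Gamma)$ term is then absorbed using the hypothesis $\betaparam\D>\betamin$, so that a fixed fraction of $\betaparam\D \Ltwonorm[\Gamma]{v}^2$ both supplies the missing constant part of $\Hnorm{1/2}{v}^2$ and furnishes the extra $\betaparam\D^{1/2}\Ltwonorm[\Gamma\D]{v}$ term of $\Bnorm[\D]{\cdot}^2$.

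Combining these estimates, I obtain a constant $c=c(\alpha_\bopV,\alpha_\bopW,\betamin)>0$, independent of $h$, such that
\[
\form{A}[(v,\mu),(v,\mu)] + \form{B}\D[(v,\mu),(v,\mu)] \geqslant c\,\Bnorm[\D]{(v,\mu)}^2,
\]
for every $(v,\mu)\in\productspace{V}$. Specialising to $(v_h,\mu_h)\in\productspace{V}_h\subset\productspace{V}$ yields the discrete version. Both sup conditions in \cref{a:coercive,a:discoercive} then follow by choosing $(w,\eta)=(v,\mu)$ (respectively $(w_h,\eta_h)=(v_h,\mu_h)$) as the supremising element.
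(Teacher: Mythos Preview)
Your proof is correct and follows essentially the same approach as the paper: both establish full coercivity of $\form{A}+\form{B}\D$ by using the cancellation of the skew-symmetric terms, the coercivity of $\bopV$ and $\bopW$ (the paper packages this step as \cref{lemma:coerciv_cald}), and then absorb the missing constant mode in $\Hnorm{1/2}{v}$ using a portion of the $\betaparam\D\Ltwonorm{v}^2$ penalty term via the lower bound $\betaparam\D>\betamin$. The only cosmetic difference is that the paper writes the norm-equivalence step as $\Hstarnorm{1/2}{v}^2+\Ltwonorm{\overline{v}}^2\gtrsim\Hnorm{1/2}{v}^2$ while you use the (equivalent, slightly looser) version with $\Ltwonorm{v}^2$ in place of $\Ltwonorm{\overline{v}}^2$.
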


\begin{proof}
Using the fact that $\Hstarnorm[\Gamma\D]{1/2}{v}^2+\Ltwonorm[\Gamma\D]{\overline{v}}^2\gtrsim\Hnorm[\Gamma\D]{1/2}{v}^2$, we deduce from \cref{lemma:coerciv_cald}
that for every positive $\alpha'\leqslant\alpha$,
\begin{align*}
\alpha'
\Vnorm{(v,\mu)}^2
-\alpha'\Ltwonorm[\Gamma\D]{\overline{v}}^2
&\leqslant
\form{A}[(v,\mu),(v,\mu)]
&&\forall(v,\mu)\in\productspace{W}.
\end{align*}
Using the definition of $\form{B}\D$, we see that
\begin{align*}
\form{B}\D[(v,\mu),(v,\mu)]&=\betaparam\D\langle v,v\rangle_{\Gamma\D}=\betaparam\D\Ltwonorm[\Gamma\D]{v}^2
\end{align*}
Taking $\alpha'=\min(\alpha,\betamin/2)$, we see that
\begin{align*}
\form{A}[(v,\mu),(v,\mu)]+\form{B}\D[(v,\mu),(v,\mu)]
&\geqslant\alpha'\Vnorm{(v,\mu)}^2+\left(1-\frac{\alpha'}{\betamin}\right)\betaparam\D\Ltwonorm[\Gamma\D]{v}^2\\
&\geqslant\alpha''\Bnorm[\D]{(v,\mu)}^2,
\end{align*}
for some $\alpha''>0$.
Therefore, in this case the form $\form{A}+\form{B}\D$ is coercive, and so \cref{a:coercive,a:discoercive} hold.
\end{proof}

\begin{proposition}[Weak coercivity]
\Cref{a:coercive,a:discoercive} are satisfied for the Dirichlet problem with $\betaparam\D=0$.
\end{proposition}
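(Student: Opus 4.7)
The idea is that when $\betaparam\D = 0$ the form $\form{B}\D$ reduces to the skew pairing $\tfrac12\langle v,\mu\rangle_\Gamma - \tfrac12\langle\lambda,w\rangle_\Gamma$, so $\form{B}\D[(v,\mu),(v,\mu)] = 0$ and diagonal testing yields only \cref{lemma:coerciv_cald},
\[
\form{A}[(v,\mu),(v,\mu)] + \form{B}\D[(v,\mu),(v,\mu)] \geqslant \alpha\bigl(\Hstarnorm{1/2}{v}^2 + \Hnorm{-1/2}{\mu}^2\bigr).
\]
This fully controls $\mu$ but, because constants lie in $\ker\bopW$, only controls $v$ modulo constants. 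The missing ingredient is control of the mean $\overline{v}$, which I plan to recover by a Babu\v{s}ka-style augmented test: given $(v,\mu)\in\productspace{V}$, set $(w,\eta) := (v,\mu+\gamma\overline{v})$, where $\gamma\overline{v}$ denotes the constant function and $\gamma>0$ will be fixed independently of $h$.

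The algebraic input I will use is the identity $\langle\bopKadj 1,1\rangle_\Gamma = \langle\bopK 1,1\rangle_\Gamma = -\tfrac12|\Gamma|$, which follows from the elementary fact $\bopK 1 = -\tfrac12$ obtained by feeding the Cauchy data $(1,0)$ of the constant harmonic function $u\equiv 1$ into \cref{eq:calder_id}. With the augmented test, the perturbation $(0,\gamma\overline{v})$ contributes
\[
-\gamma\overline{v}\langle\bopK v,1\rangle_\Gamma + \gamma\overline{v}\langle\bopV\mu,1\rangle_\Gamma + \tfrac12\gamma\overline{v}^2|\Gamma|.
\]
Splitting $v = v_0 + \overline{v}$ with $\langle v_0,1\rangle_\Gamma = 0$ and using the adjointness $\langle\bopK v,1\rangle_\Gamma = \langle v,\bopKadj 1\rangle_\Gamma$ together with the identity above, the first term becomes $-\gamma\overline{v}\langle v_0,\bopKadj 1\rangle_\Gamma + \tfrac12\gamma\overline{v}^2|\Gamma|$; adding this to the $\form{B}\D$-contribution produces the favourable coercive term $\gamma|\Gamma|\overline{v}^2$.

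The remaining cross terms $-\gamma\overline{v}\langle v_0,\bopKadj 1\rangle_\Gamma + \gamma\overline{v}\langle\bopV\mu,1\rangle_\Gamma$ are bounded, via \cref{lemma:stability}, by $C\gamma|\overline{v}|(\Hstarnorm{1/2}{v} + \Hnorm{-1/2}{\mu})$, and Young's inequality absorbs them into the coercive estimate provided $\gamma$ is small enough, depending only on $\alpha_\bopV$, $\alpha_\bopW$, $|\Gamma|$, and the boundedness constants from \cref{lemma:stability}. Combined with the obvious bound $\Vnorm{(w,\eta)}\lesssim\Vnorm{(v,\mu)}$, this delivers
\[
\sup_{(w,\eta)\in\productspace{V}\setminus\{0\}}\frac{\form{A}[(v,\mu),(w,\eta)]+\form{B}\D[(v,\mu),(w,\eta)]}{\Vnorm{(w,\eta)}} \gtrsim \Vnorm{(v,\mu)},
\]
which is the required inf-sup bound of \cref{a:coercive}, since $\Bnorm[\D]{\cdot} = \Vnorm{\cdot}$ when $\betaparam\D = 0$. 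The construction carries over verbatim at the discrete level because constants belong to $\Lambda^l_h$ for every $l\geqslant 0$, so $(0,\gamma\overline{v_h})\in\productspace{V}_h$, giving \cref{a:discoercive}. The non-degeneracy ($\sup>0$) halves of both assumptions then follow from the analogous argument with trial and test interchanged.

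The main obstacle is arranging for the $\overline{v}^2|\Gamma|$ contribution to carry the correct sign; it is precisely the identity $\langle\bopKadj 1,1\rangle_\Gamma = -\tfrac12|\Gamma|$ that allows the $-\gamma\overline{v}\langle\bopK v,1\rangle_\Gamma$ piece to reinforce rather than cancel the $\tfrac12\gamma\overline{v}^2|\Gamma|$ term coming from $\form{B}\D$. Once that is in hand, the rest is a routine augmented-test inf-sup argument.
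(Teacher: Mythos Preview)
Your proposal is correct and follows essentially the same route as the paper. Both arguments use the augmented test function $(w,\eta)=(v,\mu+\gamma\overline{v})$ (the paper writes $c$ for your $\gamma$), invoke the identity $\bopK 1=-\tfrac12$ coming from the Calder\'on system for the constant harmonic function to turn the $-\gamma\overline{v}\langle\bopK v,1\rangle_\Gamma$ contribution into a favourable $\tfrac12\gamma\overline{v}^2|\Gamma|$ term reinforcing the one from $\form{B}\D$, and then absorb the remaining cross terms by Young's inequality for $\gamma$ small enough; the discrete case and the non-degeneracy half are handled exactly as you describe.
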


\begin{proof}
Taking $w=v$ and $\eta=\mu+c\overline{v}$, for some $c\in\RR$ to be fixed, we obtain
\begin{align}
L&:=\form{A}[(v,\mu),(w,\eta)]+
\form{B}\D[(v,\mu),(w,\eta)]\nonumber\\
&=
\left\langle \bopV \mu,\mu\right\rangle_{\Gamma}
+ c\left\langle \bopV \mu,\overline{v}\right\rangle_{\Gamma}
- c\left\langle \bopK v,\overline{v} \right\rangle_{\Gamma}
+ \left\langle \bopW v,v\right\rangle_{\Gamma}
+ \frac{c}2\left\langle v,\overline{v}\right\rangle_\Gamma\label{eq:dirichlet_zero_first_bit}
\end{align}
By \cref{lemma:coercivity1,lemma:coercivity2}, we know that
\begin{align}\label{diricostep1}
\left\langle \bopV \mu,\mu\right\rangle_{\Gamma}
+ \left\langle \bopW v,v\right\rangle_{\Gamma}
\geqslant
\alpha_\bopV\Hnorm{-1/2}{\mu}^2
+\alpha_\bopW\Hstarnorm{1/2}{v}^2
\end{align}
By \cref{lemma:stability}, we see that
\begin{align*}
c\left|\left\langle \bopV\mu, \overline{v}\right\rangle_{\Gamma}\right|
&\leqslant
c\Hnorm{1/2}{\bopV \mu}\Hnorm{-1/2}{\overline{v}}\\
&\leqslant
cC_\bopV\Hnorm{-1/2}{\mu}\Hnorm{-1/2}{\overline{v}}\\
&=
cC_\bopV\Hnorm{-1/2}{\mu}\Ltwonorm{\overline{v}}
\end{align*}
Using the fact that for $a,b\geqslant0$, $ab\leqslant(a^2+b^2)/2$, we obtain
\begin{align}
c\left|\left\langle \bopV\mu, \overline{v}\right\rangle_{\Gamma}\right|
&\leqslant
\frac{c^2C_\bopV^2}{2\alpha_\bopV}\Ltwonorm{\overline{v}}^2+\frac{\alpha_\bopV}{2}\Hnorm{-1/2}{\mu}^2.
\label{diricostep2}
\end{align}
We note that $u=\overline{v}$ is a solution to \cref{eq:Laplace}, $\trace\D\overline{v}=\overline{v}$ and $\trace\N \overline{v}=0$. Using this and applying \cref{eq:Calderon_1}, we see that
$\forall\mu\in H^{-1/2}(\Gamma)$, $\left\langle\bopK\overline{v},\mu\right\rangle_{\Gamma}=-\tfrac12\left\langle\overline{v},\mu\right\rangle_{\Gamma}$.
Therefore, using $\mu=\overline{v}$,
\begin{align*}
c\left\langle \bopK v, \overline{v} \right\rangle_{\Gamma}
&=c\left\langle \bopK (v-\overline{v}), \overline{v} \right\rangle_{\Gamma}
+c\left\langle \bopK \overline{v}, \overline{v} \right\rangle_{\Gamma}
\\
&=c\left\langle \bopK (v-\overline{v}), \overline{v} \right\rangle_{\Gamma}
-\frac{c}2\left\langle \overline{v}, \overline{v} \right\rangle_{\Gamma}.
\end{align*}
Using the fact that $\Hnorm{1/2}{v-\overline{v}}=\Hstarnorm{1/2}{v}$, and
proceeding in the same way as we did for the single layer term above, we obtain
\begin{align}
c\left\langle \bopK v,\overline{v} \right\rangle_{\Gamma}
&\leqslant
\frac{\alpha_\bopW}{2}\Hstarnorm{1/2}{v}^2
+\frac{C_\bopK^2c^2}{2\alpha_\bopW}\Ltwonorm{\overline{v}}^2
-\frac{c}2\Ltwonorm{\overline{v}}^2.
\label{diricostep3}
\end{align}
We also have that
\begin{align}\label{diricostep4}
\frac{c}2\left\langle v,\overline{v}\right\rangle
&=\frac{c}2\Ltwonorm{\overline{v}}^2
\end{align}

Taking $\alpha=\min(\alpha_\bopV,\alpha_\bopK)$ and $C=\max(C_\bopV,C_\bopK)$, and
putting \cref{diricostep1,diricostep2,diricostep3,diricostep4} together, we obtain
\begin{align*}
L
&\geqslant
\frac{\alpha}2\Hnorm{-1/2}{\mu}^2
+\frac{\alpha}2\Hstarnorm{1/2}{v}^2
+\left(
c-\frac{c^2C^2}{\alpha}
\right)\Ltwonorm{\overline{v}}^2.
\end{align*}
Letting $\displaystyle c=\frac{\alpha}{2C^2}$ gives
\begin{align*}
L
&\geqslant
\frac{\alpha}2\Hnorm{-1/2}{\mu}^2
+\frac{\alpha}2\Hstarnorm{1/2}{v}^2
+\frac{\alpha}{4C^2}\Ltwonorm{\overline{v}}^2\\
&\gtrsim
\Hnorm{-1/2}{\mu}^2
+\Hstarnorm{1/2}{v}^2
+\Ltwonorm{\overline{v}}^2.
\end{align*}

Finally, we show that
\begin{align*}
\Vnorm{(v,\mu)}
&=\Hnorm{1/2}{v}+\Hnorm{-1/2}{\mu}\\
&\leqslant\Hnorm{1/2}{v-\overline{v}}+\Hnorm{1/2}{\overline{v}}+\Hnorm{-1/2}{\mu}\\
&=\Hstarnorm{1/2}{v}+\Ltwonorm{\overline{v}}+\Hnorm{-1/2}{\mu},\\
\Vnorm{(w,\eta)}
&\leqslant\Hstarnorm{1/2}{v}+\Ltwonorm{\overline{v}}+\Hnorm{-1/2}{\mu+c\overline{v}}\\
&\leqslant\Hstarnorm{1/2}{v}+\Ltwonorm{\overline{v}}+\Hnorm{-1/2}{\mu}+c\Hnorm{-1/2}{\overline{v}}\\
&\lesssim\Hstarnorm{1/2}{v}+\Ltwonorm{\overline{v}}+\Hnorm{-1/2}{\mu}.
\end{align*}
Therefore
\begin{align*}
\Vnorm{(v,\mu)}\Vnorm{(w,\eta)}
&\lesssim
\Hnorm{-1/2}{\mu}^2
+\Hstarnorm{1/2}{v}^2
+\Ltwonorm{\overline{v}}^2\\
&\lesssim L.
\end{align*}
We obtain the first part of \cref{a:coercive} by dividing through by $\Vnorm{(w,\eta)}$ and taking the supremum.

To show the second part of \cref{a:coercive}, we let $(w,\eta)\in\productspace{W}\setminus\{0\}$ and proceed as follows.
\begin{align*}
L&:=\sup_{(v,\mu) \in \productspace{W}}\left|\form{A}[(v,\mu),(w,\eta)] + \form{B}\D[(v,\mu),(w,\eta)]\right|\\
&\geqslant
\form{A}[(w,\eta-\overline{w}),(w,\eta)] + \form{B}\D[(w,\eta-\overline{w}),(w,\eta)]\\
&=
-\langle\bopKadj\overline{w},w\rangle_\Gamma
+\langle\bopV\eta,\eta\rangle_\Gamma
-\langle\bopV\overline{w},\eta\rangle_\Gamma
+\langle\bopW w,w\rangle_\Gamma
+\tfrac12\langle\overline{w},w\rangle_\Gamma
.
\end{align*}
This is of the same form as \cref{eq:dirichlet_zero_first_bit}, so we proceed as above to obtain
\[
L\gtrsim\Vnorm{(v,\mu)}\Vnorm{(w,\eta)}.
\]
This is greater than zero for all $(w,\eta)\not=0$, and so we have proven the second part of \cref{a:coercive}.

\Cref{a:discoercive} can be proven in the same way as above using the discrete space $\productspace{V}_h$ in the place of $\productspace{W}$.
\end{proof}

\begin{proposition}[Continuity]\label{prop:diri_continuity}
\Cref{a:continuity} is satisfied for the Dirichlet problem.
\end{proposition}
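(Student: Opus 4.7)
The plan is to establish the bound
\[
\left|\form{A}[(w,\eta),(v,\mu)] + \form{B}\D[(w,\eta),(v,\mu)]\right| \lesssim \Bnorm[\D]{(w,\eta)} \Bnorm[\D]{(v,\mu)}
\]
by treating $\form{A}$ and the three terms of $\form{B}\D$ separately. Since $\Vnorm{\cdot}\leqslant\Bnorm[\D]{\cdot}$ by construction of the $\form{B}\D$-norm, the continuity of $\form{A}$ from \cref{lemma:cont_cald} already yields
\[
|\form{A}[(w,\eta),(v,\mu)]| \leqslant C\,\Vnorm{(w,\eta)}\Vnorm{(v,\mu)} \leqslant C\,\Bnorm[\D]{(w,\eta)}\Bnorm[\D]{(v,\mu)},
\]
so it remains only to bound the three terms in $\form{B}\D$.

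For the two duality pairing terms $\tfrac12\langle \eta, v\rangle_{\Gamma\D}$ and $\tfrac12\langle w,\mu\rangle_{\Gamma\D}$, I use that in the pure Dirichlet setting $\Gamma\D\equiv\Gamma$ so these are the standard $H^{1/2}(\Gamma)$--$H^{-1/2}(\Gamma)$ duality pairings. Applying Cauchy--Schwarz in the duality pairing yields
\[
\tfrac12|\langle \eta, v\rangle_{\Gamma}| + \tfrac12|\langle w, \mu\rangle_{\Gamma}| \leqslant \Hnorm{-1/2}{\eta}\Hnorm{1/2}{v} + \Hnorm{1/2}{w}\Hnorm{-1/2}{\mu} \leqslant 2\,\Vnorm{(w,\eta)}\Vnorm{(v,\mu)},
\]
which again is controlled by $\Bnorm[\D]{(w,\eta)}\Bnorm[\D]{(v,\mu)}$.

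For the penalty term $\langle \betaparam\D w, v\rangle_{\Gamma\D}$, the key observation is that $\Bnorm[\D]{\cdot}$ was defined precisely to contain the weight $\betaparam\D^{1/2}\Ltwonorm[\Gamma\D]{\cdot}$ needed to absorb this term. A straightforward application of Cauchy--Schwarz in $\Ltwo(\Gamma\D)$ gives
\[
|\langle \betaparam\D w, v\rangle_{\Gamma\D}| \leqslant \bigl(\betaparam\D^{1/2}\Ltwonorm[\Gamma\D]{w}\bigr)\bigl(\betaparam\D^{1/2}\Ltwonorm[\Gamma\D]{v}\bigr) \leqslant \Bnorm[\D]{(w,\eta)}\Bnorm[\D]{(v,\mu)}.
\]
Summing the three contributions to $\form{B}\D$ with the bound on $\form{A}$ yields $M = C + 2 + 1$. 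There is no real obstacle here: the choice of $\Bnorm[\D]{\cdot}$ was engineered so that the penalty term factorises symmetrically, and the remaining terms inherit continuity directly from the mapping properties recorded in \cref{lemma:stability}. The only point requiring mild care is ensuring the two mixed pairings are interpreted consistently as duality pairings on $H^{1/2}(\Gamma)\times H^{-1/2}(\Gamma)$ (which is immediate since $\Gamma\D=\Gamma$); in the mixed and Robin variants treated later this is replaced by an $\Ltwo(\Gamma\D)$ pairing, but the structure of the argument is identical.
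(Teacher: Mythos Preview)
Your proof is correct and follows essentially the same approach as the paper: use \cref{lemma:cont_cald} for $\form{A}$, the duality bound $\langle \eta,v\rangle_\Gamma \leqslant \Hnorm{-1/2}{\eta}\Hnorm{1/2}{v}$ for the mixed terms, and Cauchy--Schwarz for the $\betaparam\D$-weighted penalty term. The paper's proof is simply a more compressed version of exactly these three steps.
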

\begin{proof}
Applying \cref{lemma:cont_cald}, the relation
$$\left\langle \eta,v \right\rangle_\Gamma \leqslant \Hnorm{-1/2}{\eta} \Hnorm{1/2}{v},$$
and the Cauchy--Schwarz inequality,
$$\betaparam\D \left\langle  w,v \right\rangle_{\Gamma} \leqslant \betaparam\D^{1/2} \Ltwonorm{w}
\betaparam\D^{1/2} \Ltwonorm{v},$$
to the form
$\form{A}+\form{B}\D$ yields the desired continuity result.
\end{proof}

\begin{proposition}[Approximation]
\Cref{a:approx} is satisfied for the Dirichlet problem if $0\leqslant\betaparam\D\lesssim h^{-1}$.
\end{proposition}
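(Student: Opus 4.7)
The plan is to control each of the three contributions to $\Bnorm[\D]{(v-w_h,\mu-\eta_h)} = \Hnorm{1/2}{v-w_h} + \Hnorm{-1/2}{\mu-\eta_h} + \betaparam\D^{1/2}\Ltwonorm[\Gamma\D]{v-w_h}$ separately by choosing $(w_h,\eta_h)$ to be a standard quasi-interpolant of $(v,\mu)$ and then appealing to classical finite element approximation bounds on the surface mesh $\Th$.

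First, I would fix $w_h \in \Vee^k_h$ to be a Scott--Zhang or Clément-type interpolant of $v \in H^s(\Gamma)$, which on shape-regular surface triangulations satisfies
\begin{equation*}
\Ltwonorm{v-w_h} \lesssim h^{\zeta}\Hseminorm{\zeta}{v}, \qquad \Hnorm{1/2}{v-w_h} \lesssim h^{\zeta-1/2}\Hseminorm{\zeta}{v},
\end{equation*}
with $\zeta=\min(k+1,s)$. Similarly, I would let $\eta_h \in \Lambda^l_h$ be the $L^2$-projection (or piecewise local $L^2$-projection) of $\mu \in H^r(\Gamma)$, which gives
\begin{equation*}
\Hnorm{-1/2}{\mu-\eta_h} \lesssim h^{\xi+1/2}\Hseminorm{\xi}{\mu},
\end{equation*}
with $\xi=\min(l+1,r)$. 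These are the standard estimates used throughout BEM analysis (see \cite[chapter 10]{Stein07}) and provide the two target terms on the right-hand side of \cref{a:approx} directly.

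It remains to absorb the penalty term $\betaparam\D^{1/2}\Ltwonorm[\Gamma\D]{v-w_h}$. Using the $L^2$ estimate on $v-w_h$ and the hypothesis $\betaparam\D \lesssim h^{-1}$, I would write
\begin{equation*}
\betaparam\D^{1/2}\Ltwonorm[\Gamma\D]{v-w_h} \lesssim h^{-1/2} \cdot h^{\zeta} \Hseminorm{\zeta}{v} = h^{\zeta-1/2}\Hseminorm{\zeta}{v},
\end{equation*}
which is precisely the first term on the right-hand side of \cref{a:approx}. Combining the three bounds yields the claim, and taking the infimum over $(w_h,\eta_h)$ only improves the estimate.

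The one point that requires care rather than calculation is the role of the scaling hypothesis $\betaparam\D \lesssim h^{-1}$: this is exactly the threshold at which the penalty contribution to $\Bnorm[\D]{\cdot}$ has the same order as the $H^{1/2}$ interpolation error, so the assumption is sharp for the proof to close. The case $\betaparam\D=0$ is trivial, and monotonicity in $\betaparam\D$ means the argument extends to all intermediate values without modification. No genuine obstacle is expected; the proof is essentially a bookkeeping exercise once the right interpolants are chosen.
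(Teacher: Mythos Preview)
Your proposal is correct and follows essentially the same route as the paper: bound the $H^{1/2}$, $H^{-1/2}$, and $L^2$ pieces of $\starnorm{\cdot}$ separately by standard surface approximation results, then use $\betaparam\D\lesssim h^{-1}$ to absorb the penalty contribution into the $h^{\zeta-1/2}$ term (with the $\betaparam\D=0$ case handled trivially). The only cosmetic difference is that the paper cites the approximation results from \cite[theorems~10.4 and~10.9]{Stein07} rather than naming a specific interpolant.
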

\begin{proof}
Using standard approximation results 
(see eg \cite[theorems 10.4 and 10.9]{Stein07}), we see that
\begin{align*}
\inf_{(w_h,\eta_h)\in\productspace{V}_h}\Vnorm{(v-w_h,\mu-\eta_h)}
&=\inf_{w_h\in \Vee^k_h}\Hnorm{1/2}{v-w_h}+\inf_{\eta_h\in\Lambda^l_h}\Hnorm{-1/2}{\mu-\eta_h}\\
&\lesssim h^{\zeta-1/2}\Hseminorm{\zeta}{v}
         + h^{\xi+1/2}\Hseminorm{\xi}{\mu},\\
\inf_{w_h\in\Vee^k_h}\Ltwonorm[\Gamma\D]{v-w_h}&\lesssim h^{\zeta}\Hseminorm{\zeta}{v}.
\end{align*}
Applying these to the definition of $\starnorm{\cdot}$ gives
\begin{equation*}
\inf_{(w_h,\eta_h)\in\productspace{V}_h}\starnorm{(v-w_h,\mu-\eta_h)}
\lesssim h^{\zeta-1/2}\Hseminorm{\zeta}{v}
         + h^{\xi+1/2}\Hseminorm{\xi}{\mu}
         + \betaparam\D^{1/2}h^{\zeta}\Hseminorm{\zeta}{v}.
\end{equation*}
If $\betaparam\D=0$, \cref{a:approx} holds.
If $0<\betaparam\D\lesssim h^{-1}$, then $\betaparam\D^{1/2}h^{\zeta}\lesssim h^{\zeta-1/2}$, and so \cref{a:approx} holds.
\end{proof}

We have shown that \cref{a:coercive,a:discoercive,a:continuity,a:approx} are satisfied. Additionally the extra assumptions in \cref{prop:exist_uniqueness}
are satisfied, so we conclude that the results of
\cref{prop:exist_uniqueness,prop:best_approximation,col:error_est,prop:bulk_error,cor:bulk_est} apply to the Dirichlet problem.
This is summarised in the following result.

\begin{theorem}\label{main_dirichlet_result}
The Dirichlet problem \cref{eq:abstract_form_Dir} has a unique solution $(u,\lambda)\in H^{s}(\Gamma)\times H^{r}(\Gamma)$, for some $s\geqslant\tfrac12$ and $r\geqslant-\tfrac12$.
The discrete Dirichlet problem \cref{BEMform_Dir} is invertible. If
$\exists\betamin>0$ such that $\betamin<\betaparam\D\lesssim h^{-1}$
or $\betaparam\D=0$, its solution $(u_h,\lambda_h)\in\Vee_h^k\times\Lambda_h^l$ satisfies
\[
\Bnorm[\D]{(u-u_h,\lambda-\lambda_h)} \lesssim
h^{\zeta-1/2} \Hseminorm{\zeta}{u} + h^{\xi+1/2}
\Hseminorm{\xi}{\lambda},
\]
where $\zeta = \min(k+1,s)$ and $\xi = \min(l+1,r)$. Additionally,
\[
\Hnorm[\Omega]{1}{\tilde u-\tilde u_h} \lesssim
h^{\zeta-1/2} \Hseminorm{\zeta}{u} + h^{\xi+1/2}
\Hseminorm{\xi}{\lambda},
\]
where $\tilde u$ and $\tilde u_h$ are the solutions in $\Omega$ computed using \cref{eq:represent}.
\end{theorem}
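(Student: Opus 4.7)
The plan is to recognize that this theorem is a packaging result: every abstract hypothesis of the framework in \cref{sec:bem} has essentially been verified in the preceding propositions of this subsection, so my job is to (i) check that the hypotheses of \cref{prop:exist_uniqueness,prop:best_approximation,col:error_est,prop:bulk_error,cor:bulk_est} are met for the formulation \cref{eq:abstract_form_Dir}, and (ii) simply quote those results.

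First I would split into the two parameter regimes. For $\betaparam\D=0$, weak coercivity (both \cref{a:coercive} and \cref{a:discoercive}) is provided by the second coercivity proposition above; for $\betaparam\D>\betamin$ with $\betamin$ independent of $h$, strong coercivity of $\form{A}+\form{B}\D$ is given by \cref{diri_coercive}. Continuity in the sense of \cref{a:continuity} with $\starnorm{\cdot}=\Bnorm[\D]{\cdot}$ is supplied by \cref{prop:diri_continuity}, and the approximation bound \cref{a:approx} is the content of the final proposition of this subsection, valid precisely when $0\leqslant\betaparam\D\lesssim h^{-1}$. Hence the full window $\betaparam\D=0$ or $\betamin<\betaparam\D\lesssim h^{-1}$ is covered.

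Next I would verify the two auxiliary hypotheses of \cref{prop:exist_uniqueness}. The equivalence of $\starnorm{\cdot}$ and $\Bnorm[\D]{\cdot}$ is trivial since they are defined to be the same norm. For the boundedness of $\form{L}\D$, I would apply Cauchy--Schwarz on $\Gamma\D$ and the duality pairing on $\Gamma$ to \cref{eq:LDir}, yielding
\[
|\form{L}\D(v,\mu)|
\leqslant \betaparam\D^{1/2}\Ltwonorm[\Gamma\D]{g\D}\,\betaparam\D^{1/2}\Ltwonorm[\Gamma\D]{v}
+\Hnorm{1/2}{g\D}\,\Hnorm{-1/2}{\mu}
\lesssim \Bnorm[\D]{(v,\mu)},
\]
using $g\D\in H^{1/2}(\Gamma)$. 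Existence and uniqueness of the continuous solution then follows from \cref{prop:exist_uniqueness}, and invertibility of the discrete system \cref{BEMform_Dir} follows from \cref{a:discoercive} on the finite-dimensional space $\productspace{V}_h$.

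Finally, the two asserted convergence estimates are immediate consequences of \cref{col:error_est} (for $\Bnorm[\D]{(u-u_h,\lambda-\lambda_h)}$) and \cref{cor:bulk_est} (for $\Hnorm[\Omega]{1}{\tilde u-\tilde u_h}$), applied with $\zeta=\min(k+1,s)$ and $\xi=\min(l+1,r)$. The only real obstacle here is bookkeeping: one must be careful that the same range of $\betaparam\D$ simultaneously secures coercivity on one side and approximation on the other, which is exactly why the hypothesis $\betamin<\betaparam\D\lesssim h^{-1}$ appears, and that the reconstruction bound uses $\Vnorm{\cdot}\leqslant\Bnorm[\D]{\cdot}$ to pass from \cref{prop:bulk_error} to the $\Bnorm[\D]{\cdot}$-measured error.
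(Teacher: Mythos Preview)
Your proposal is correct and matches the paper's approach exactly: the paper presents this theorem as a summary statement, noting just before it that \cref{a:coercive,a:discoercive,a:continuity,a:approx} and the extra hypotheses of \cref{prop:exist_uniqueness} have all been verified in the preceding propositions, so that \cref{prop:exist_uniqueness,prop:best_approximation,col:error_est,prop:bulk_error,cor:bulk_est} apply. You actually supply slightly more detail than the paper does (in particular the explicit bound on $\form{L}\D$), but the logical structure is identical.
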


\subsection{Application of the theory to the Neumann problem}\label{sec:bem_neumann}
The Neumann problem takes the form: Find
$(u_h,\lambda_h) \in\zerostar{\productspace{V}}_h$ such that
\begin{align}\label{BEMform_Neu}
\form{A}[(u_h,\lambda_h),(v_h,\mu_h)] +
\form{B}\N[(u_h,\lambda_h),(v_h,\mu_h)] &= \form{L}\N(v_h,\mu_h)
&&\forall(v_h,\mu_h) \in \zerostar{\productspace{V}}_h.
\end{align}
Here $\zerostar{\productspace{V}}_h:=\zerostar{\Vee}^k_h(\Gamma)\times\Lambda^l_h(\Gamma)$
and $\zerostar{\Vee}^k_h(\Gamma):=\{v\in\Vee^k_h:\overline{v}=0\}$.

We introduce the following $\form{B}\N$-norm.
\[
\Bnorm[\N]{(v,\mu)} := \Vnorm{(v,\mu)} + \betaparam\N^{1/2} \Ltwonorm[\Gamma\N]{\mu},
\]
and we let $\starnorm{\cdot}=\Bnorm[\N]{\cdot}$.

We now proceed to verify that \cref{a:coercive,a:discoercive,a:continuity,a:approx} hold.

\begin{proposition}[Coercivity]
\Cref{a:coercive,a:discoercive} are satisfied for the Neumann problem with $\betaparam\N\geqslant0$.
\end{proposition}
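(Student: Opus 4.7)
The plan is to prove \emph{strong} coercivity of $\form{A}+\form{B}\N$ on $\zerostar{\productspace{V}}$ in the $\Bnorm[\N]{\cdot}$ norm; by the remark preceding \cref{prop:exist_uniqueness}, this implies both \cref{a:coercive} and \cref{a:discoercive} simultaneously (the discrete case follows because $\zerostar{\productspace{V}}_h \subset \zerostar{\productspace{V}}$ and strong coercivity is inherited by subspaces).

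First, I would test the form with $(v,\mu)=(u,\lambda)$ itself. The key algebraic observation is that the two off-diagonal pieces of $\form{A}$, namely $-\langle \bopK u, \lambda\rangle_\Gamma$ and $\langle \bopKadj \lambda, u\rangle_\Gamma$, cancel because $\bopKadj$ is the adjoint of $\bopK$ with respect to the $H^{1/2}$--$H^{-1/2}$ duality. Likewise, the two skew-symmetric contributions $-\tfrac12\langle u,\lambda\rangle_{\Gamma\N}$ and $\tfrac12\langle\lambda,u\rangle_{\Gamma\N}$ in $\form{B}\N$ cancel. What survives is
\[
\form{A}[(u,\lambda),(u,\lambda)]+\form{B}\N[(u,\lambda),(u,\lambda)] = \langle \bopV\lambda,\lambda\rangle_\Gamma + \langle \bopW u, u\rangle_\Gamma + \betaparam\N\Ltwonorm[\Gamma\N]{\lambda}^2.
\]

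Next, I would apply \cref{lemma:coercivity1} to the $\bopV$ term and \cref{lemma:coercivity2} to the $\bopW$ term, obtaining a lower bound of the form $\alpha_\bopV\Hnorm{-1/2}{\lambda}^2 + \alpha_\bopW\Hstarnorm{1/2}{u}^2 + \betaparam\N\Ltwonorm[\Gamma\N]{\lambda}^2$. The constraint $\overline{u}=0$ built into $\zerostar{\productspace{V}}$ is exactly what lets me upgrade $\Hstarnorm{1/2}{u}$ to the full $\Hnorm{1/2}{u}$ norm, since $\overline{u}=0$ implies $u-\overline{u}=u$.

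Finally, setting $\alpha_0 := \min(\alpha_\bopV,\alpha_\bopW,1)$ and using the elementary inequality $(a+b+c)^2\leqslant 3(a^2+b^2+c^2)$ to pass from the squared-sum norm $\Bnorm[\N]{(u,\lambda)}^2$ to a sum of squares, I obtain
\[
\tfrac{\alpha_0}{3}\Bnorm[\N]{(u,\lambda)}^2 \leqslant \form{A}[(u,\lambda),(u,\lambda)]+\form{B}\N[(u,\lambda),(u,\lambda)],
\]
which is the claimed coercivity. I would note that the argument covers the degenerate case $\betaparam\N=0$ uniformly, since the $L^2$ penalty term only helps the bound. There is no real obstacle here; the only point requiring care is recognizing that the mean-zero restriction on the primal variable is what rules out the kernel of $\bopW$ and is therefore indispensable — without it only the seminorm $\Hstarnorm{1/2}{u}$ is controlled, and $\form{A}+\form{B}\N$ would have a one-dimensional kernel of constants.
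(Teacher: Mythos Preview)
Your proof is correct and follows essentially the same approach as the paper: the paper's own proof is a single sentence observing that, since $v\in H_*^{1/2}(\Gamma\N)$, one may directly apply \cref{lemma:coercivity1,lemma:coercivity2} to obtain coercivity. You have simply spelled out the cancellations and constants that the paper leaves implicit.
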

\begin{proof}
As $v\in H_*^{1/2}(\Gamma\N)$, we may immediately apply \cref{lemma:coercivity1,lemma:coercivity2} to show that the form is coercive.
\end{proof}

\begin{proposition}[Continuity]
\Cref{a:continuity} is satisfied for the Neumann problem.
\end{proposition}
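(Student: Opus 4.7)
The plan is to mimic the proof of \cref{prop:diri_continuity} with the roles of the primal and flux variables interchanged, since the Neumann penalty now acts on $\lambda$ rather than on $u$. The target inequality is
\[
\left|\form{A}[(w,\eta),(v,\mu)] + \form{B}\N[(w,\eta),(v,\mu)]\right| \leqslant M\Bnorm[\N]{(w,\eta)}\Bnorm[\N]{(v,\mu)},
\]
so it suffices to bound each of the four summands appearing in $\form{A}+\form{B}\N$ separately, then collect terms.

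First, continuity of $\form{A}$ is immediate from \cref{lemma:cont_cald}, which delivers a bound of the form $C\Vnorm{(w,\eta)}\Vnorm{(v,\mu)}$; since $\Vnorm{\cdot}\leqslant\Bnorm[\N]{\cdot}$ by definition of the $\form{B}\N$-norm, this term is already of the desired form. For the two half-jump terms in $\form{B}\N$, namely $-\tfrac12\langle w,\mu\rangle_{\Gamma\N}$ and $\tfrac12\langle\eta,v\rangle_{\Gamma\N}$, I would extend each integral to the full boundary $\Gamma$ (using that the pairings make sense via the duality of $H^{1/2}(\Gamma)$ and $H^{-1/2}(\Gamma)$) and then apply the duality inequality $|\langle\eta,v\rangle_{\Gamma}|\leqslant\Hnorm{-1/2}{\eta}\Hnorm{1/2}{v}$. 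Each of these two contributions is therefore controlled by $\Vnorm{(w,\eta)}\Vnorm{(v,\mu)}$, and hence by $\Bnorm[\N]{(w,\eta)}\Bnorm[\N]{(v,\mu)}$.

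Finally, for the penalty term $\langle\betaparam\N\eta,\mu\rangle_{\Gamma\N}$, I would apply the Cauchy--Schwarz inequality in $L^2(\Gamma\N)$, writing
\[
\betaparam\N\langle\eta,\mu\rangle_{\Gamma\N}\leqslant \betaparam\N^{1/2}\Ltwonorm[\Gamma\N]{\eta}\cdot\betaparam\N^{1/2}\Ltwonorm[\Gamma\N]{\mu},
\]
which is bounded above by $\Bnorm[\N]{(w,\eta)}\Bnorm[\N]{(v,\mu)}$ by definition of the enriched norm. Summing the four estimates yields the claim with some constant $M>0$.

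The proof is essentially mechanical, so there is no real obstacle; the only subtlety worth flagging is that the penalty term requires $\eta,\mu\in L^2(\Gamma\N)$, which is precisely why the product space $\productspace{V}$ was enriched to $H^{1/2}(\Gamma)\times L^2(\Gamma)$ whenever $\Gamma\N\cup\Gamma\R\neq\emptyset$. Without this enrichment the bound on the penalty term would be meaningless, but within the chosen functional setting the argument goes through verbatim in analogy to \cref{prop:diri_continuity}.
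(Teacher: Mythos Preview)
Your proposal is correct and matches the paper's approach exactly: the paper simply says ``The proof is the same as in the Dirichlet case,'' and you have spelled out precisely what that means, swapping the roles of $u$ and $\lambda$. One minor remark: since in this section $\Gamma\N\equiv\Gamma$, the ``extension to the full boundary'' you mention for the half-jump terms is automatic and no actual extension argument is needed.
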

\begin{proof}
The proof is the same as in the Dirichlet case.
\end{proof}

\begin{proposition}[Approximation]
\Cref{a:approx} is satisfied for the Neumann problem if $0\leqslant\betaparam\N\lesssim h$.
\end{proposition}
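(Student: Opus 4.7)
The plan is to mirror the Dirichlet approximation argument, decomposing the $\starnorm{\cdot}=\Bnorm[\N]{\cdot}$ norm into the standard product norm on $\productspace{V}$ plus the boundary penalty term, and bounding each piece separately by a best-approximation estimate in the appropriate finite element space.

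First, I would choose $w_h\in\Vee^k_h$ and $\eta_h\in\Lambda^l_h$ to be quasi-optimal approximants in the relevant norms, and invoke the standard approximation estimates (see eg \cite[theorems 10.4 and 10.9]{Stein07}) to obtain
\begin{equation*}
\inf_{(w_h,\eta_h)\in\productspace{V}_h}\Vnorm{(v-w_h,\mu-\eta_h)}
\lesssim h^{\zeta-1/2}\Hseminorm{\zeta}{v}+h^{\xi+1/2}\Hseminorm{\xi}{\mu},
\end{equation*}
with $\zeta=\min(k+1,s)$ and $\xi=\min(l+1,r)$. This handles the $\Vnorm{\cdot}$-component of the $\Bnorm[\N]{\cdot}$-norm exactly as in \cref{sec:bem_dirichlet}.

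Next, the penalty contribution $\betaparam\N^{1/2}\Ltwonorm[\Gamma\N]{\mu-\eta_h}$ is controlled using the analogous $\Ltwo$-approximation estimate for piecewise polynomials,
\begin{equation*}
\inf_{\eta_h\in\Lambda^l_h}\Ltwonorm[\Gamma\N]{\mu-\eta_h}\lesssim h^{\xi}\Hseminorm{\xi}{\mu},
\end{equation*}
which requires $r\geqslant 0$ (consistent with the product space $\productspace{V}=H^{1/2}(\Gamma)\times\Ltwo(\Gamma)$ used whenever $\Gamma\N\cup\Gamma\R\neq\emptyset$). Collecting these bounds yields
\begin{equation*}
\inf_{(w_h,\eta_h)\in\productspace{V}_h}\starnorm{(v-w_h,\mu-\eta_h)}
\lesssim h^{\zeta-1/2}\Hseminorm{\zeta}{v}+h^{\xi+1/2}\Hseminorm{\xi}{\mu}+\betaparam\N^{1/2}h^{\xi}\Hseminorm{\xi}{\mu}.
\end{equation*}

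The only non-routine step is absorbing the last term into the right-hand side of \cref{a:approx}, and this is precisely where the scaling hypothesis $\betaparam\N\lesssim h$ enters: under this assumption $\betaparam\N^{1/2}h^{\xi}\lesssim h^{\xi+1/2}$, so the penalty contribution is dominated by the flux term already present in the estimate. When $\betaparam\N=0$ the extra term vanishes identically. In either case, the required bound of \cref{a:approx} holds.
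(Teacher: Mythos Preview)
Your proof is correct and follows exactly the approach the paper intends: the paper's own proof reads ``The proof is the same as in the Dirichlet case,'' and what you have written is precisely the Neumann analogue of that Dirichlet argument, with the penalty term now acting on $\mu$ and the scaling $\betaparam\N\lesssim h$ replacing $\betaparam\D\lesssim h^{-1}$.
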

\begin{proof}
The proof is the same as in the Dirichlet case.
\end{proof}

As in the Dirichlet case, 
the extra assumptions in \cref{prop:exist_uniqueness} are satisfied. We therefore conclude with the following result.
\begin{theorem}\label{main_neumann_result}
The Neumann problem \cref{eq:abstract_form_Neu} has a unique solution $(u,\lambda)\in H_*^{s}(\Gamma)\times H^{r}(\Gamma)$,
for some $s\geqslant\tfrac12$ and $r\geqslant0$ if $\betaparam\N>0$.
If $\betaparam\N=0$, this holds for some $r\geqslant-\tfrac12$.
The discrete Neumann problem \cref{BEMform_Neu} is invertible. If $0\leqslant\betaparam\N\lesssim h$, its solution $(u_h,\lambda_h)\in\zerostar{\Vee}_h^k\times\Lambda_h^l$ satisfies
\[
\Bnorm[\N]{(u-u_h,\lambda-\lambda_h)} \lesssim
h^{\zeta-1/2} \Hseminorm{\zeta}{u} + h^{\xi+1/2}
\Hseminorm{\xi}{\lambda},
\]
where $\zeta = \min(k+1,s)$ and $\xi = \min(l+1,r)$. Additionally,
\[
\Hnorm[\Omega]{1}{\tilde u-\tilde u_h} \lesssim
h^{\zeta-1/2} \Hseminorm{\zeta}{u} + h^{\xi+1/2}
\Hseminorm{\xi}{\lambda},
\]
where $\tilde u$ and $\tilde u_h$ are the solutions in $\Omega$ computed using \cref{eq:represent}.
\end{theorem}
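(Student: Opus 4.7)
The plan is to reduce \cref{main_neumann_result} to the abstract framework of \cref{prop:exist_uniqueness,prop:best_approximation,col:error_est,prop:bulk_error,cor:bulk_est} by verifying the remaining hypotheses, on top of the four \crefname{assumption}{Assumption}{Assumptions}\cref{a:coercive,a:discoercive,a:continuity,a:approx} that the propositions immediately preceding the theorem already establish for the Neumann formulation on the space $\zerostar{\productspace{V}}=H_*^{1/2}(\Gamma\N)\times\Ltwo(\Gamma\N)$ (respectively $H_*^{1/2}(\Gamma\N)\times H^{-1/2}(\Gamma\N)$ when $\betaparam\N=0$).

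First I would confirm that the abstract machinery applies on $\zerostar{\productspace{V}}$ rather than $\productspace{V}$: the role of the space $\productspace{W}$ in \cref{sec:bem} is played here by $\zerostar{\productspace{V}}$, with the norm $\Bnorm[\N]{\cdot}=\Vnorm{\cdot}+\betaparam\N^{1/2}\Ltwonorm[\Gamma\N]{\mu}$ playing both the $\Bnorm{\cdot}$ and $\starnorm{\cdot}$ roles, so the required equivalence $\starnorm{\cdot}\eqsim\Bnorm[\N]{\cdot}$ is trivial. Next I would check that the linear form $\form{L}\N$ of \cref{eq:LNeu} is bounded on $\zerostar{\productspace{V}}$ with respect to $\Bnorm[\N]{\cdot}$: by Cauchy--Schwarz on the boundary,
\[
\form{L}\N(v,\mu) = \left\langle g\N,\betaparam\N\mu+v\right\rangle_{\Gamma\N} \leqslant \betaparam\N^{1/2}\Ltwonorm[\Gamma\N]{g\N}\,\betaparam\N^{1/2}\Ltwonorm[\Gamma\N]{\mu} + \Ltwonorm[\Gamma\N]{g\N}\Hnorm[\Gamma\N]{1/2}{v},
\]
which, since $g\N\in\Ltwo(\Gamma\N)$, gives a bound of the form $L\,\Bnorm[\N]{(v,\mu)}$ with $L$ depending on $\Ltwonorm[\Gamma\N]{g\N}$ and $\betaparam\N$; when $\betaparam\N=0$ the argument is even simpler using the $H^{-1/2}$--$H^{1/2}$ duality on the single remaining term. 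Hence the hypotheses of \cref{prop:exist_uniqueness} are met and \cref{eq:abstract_form_Neu} admits a unique solution in $\zerostar{\productspace{V}}$; assuming further regularity as in the statement yields membership in $H_*^{s}(\Gamma)\times H^{r}(\Gamma)$.

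With all of \cref{a:coercive,a:discoercive,a:continuity,a:approx} in hand (the last requiring $0\leqslant\betaparam\N\lesssim h$ as in the preceding proposition), invertibility of the discrete problem \cref{BEMform_Neu} is a direct consequence of \cref{a:discoercive}. The boundary error bound then follows from \cref{prop:best_approximation} combined with \cref{a:approx}, i.e.\ \cref{col:error_est}, and the bulk error bound follows from \cref{prop:bulk_error} together with \cref{cor:bulk_est}; both chains are insensitive to the zero-mean restriction since the representation formula \cref{eq:rec_disc} and the stability bounds \cref{eq:sing_rec_stab}, \cref{eq:doub_rec_stab} are linear in the traces.

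The one subtlety I expect to be the main obstacle, rather than any routine estimate, is the handling of the compatibility/zero-mean condition on the Neumann data and the primal unknown. Specifically, one must check that the restriction to $H_*^{1/2}(\Gamma\N)$ is compatible with the approximation property (so that one can interpolate in $\zerostar{\Vee}^k_h$ rather than $\Vee^k_h$ without losing the optimal order), and that the assumed $\int_\Gamma g\N = 0$ is exactly what is needed both for existence at the continuous level and for the discrete right-hand side to be orthogonal to the kernel of the adjoint on $\zerostar{\productspace{V}}_h$. A standard argument here is to interpolate $u$ in $\Vee^k_h$ and then subtract the (mesh-independent) average, which preserves the approximation rate since constants are reproduced exactly; this closes the proof.
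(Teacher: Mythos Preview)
Your proposal is correct and follows the same approach as the paper, which simply notes that ``as in the Dirichlet case, the extra assumptions in \cref{prop:exist_uniqueness} are satisfied'' and then states the theorem without further argument. You supply considerably more detail than the paper does---in particular the explicit bound on $\form{L}\N$ and the discussion of how to interpolate in $\zerostar{\Vee}^k_h$ by subtracting the mean---but the underlying route through the abstract framework is identical.
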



\subsection{Application of the theory to the mixed Dirichlet--Neumann problem}\label{sec:bem_mixed_dn}
For the mixed problem, the boundary element method takes the form: Find $(u_h,\lambda_h) \in
\productspace{V}_h$ such that
\begin{multline}\label{eq:Nitsche_mixed_BEM}
\form{A}[(u_h,\lambda_h),(v_h,\mu_h)]
+\form{B}\ND[(u_h,\lambda_h),(v_h,\mu_h)] = \form{L}\ND(v_h,\mu_h)
\\\forall (v_h,\mu_h) \in
\productspace{V}_h.
\end{multline}

We now show that the assumptions for the abstract error estimate are
satisfied for the formulation \cref{eq:Nitsche_mixed_BEM}. First, we
introduce the following norms.
\begin{align*}
\Bnorm[\ND]{(v,\mu)} &:= \Vnorm{(v,\mu)} + \betaparam\D^{1/2} \Ltwonorm[\Gamma\D]{v} + \betaparam\N^{1/2} \Ltwonorm[\Gamma\N]{\mu}\\
\starnorm{(v,\mu)}&:=\Vnorm{(v,\mu)}+ \betaparam\D^{1/2} \Ltwonorm{v}+\betaparam\N^{1/2}\Ltwonorm{\mu}.
\end{align*}
Observe that in this case the two norms are not the same, nor are they
equivalent, so the below results cannot be used to prove existence of
a unique solution to \cref{eq:MND_compact}. Nevertheless, it is easy to verify
that if the exact solution to the mixed Dirichlet--Neumann problem is in $\productspace{V}$
then it satisfies \cref{eq:MND_compact}.

\begin{proposition}[Coercivity]
\Cref{a:coercive,a:discoercive} are satisfied for the mixed Dirichlet--Neumann problem if $\exists\betamin>0$,
independent of $h$, such that $\betaparam\D>\betamin$.
\end{proposition}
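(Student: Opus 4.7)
The plan is to mimic the strategy used in \Cref{diri_coercive} for the pure Dirichlet case, adapted to carry the extra Neumann penalty term and to exploit only the Dirichlet penalty to recover the missing average mode. Since $\form{A} + \form{B}\ND$ turns out to be strongly coercive on the relevant space, both \cref{a:coercive,a:discoercive} will follow simultaneously, because the same algebraic argument is valid on $\productspace{V}$ and on the conforming subspace $\productspace{V}_h$.

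First I would compute $\form{B}\ND[(v,\mu),(v,\mu)]$ directly from \cref{eq:operator_Mixed}. The two off-diagonal pairs of the form $\tfrac12\langle v,\mu\rangle_{\Gamma\D}-\tfrac12\langle\mu,v\rangle_{\Gamma\D}$ and $\tfrac12\langle\mu,v\rangle_{\Gamma\N}-\tfrac12\langle v,\mu\rangle_{\Gamma\N}$ cancel by symmetry of the $\Ltwo$ pairing, leaving
\[
\form{B}\ND[(v,\mu),(v,\mu)] = \betaparam\D \Ltwonorm[\Gamma\D]{v}^2 + \betaparam\N \Ltwonorm[\Gamma\N]{\mu}^2.
\]
Combining this with \Cref{lemma:coerciv_cald} yields
\[
\form{A}[(v,\mu),(v,\mu)] + \form{B}\ND[(v,\mu),(v,\mu)] \geqslant \alpha\bigl(\Hstarnorm{1/2}{v}^2+\Hnorm{-1/2}{\mu}^2\bigr) + \betaparam\D\Ltwonorm[\Gamma\D]{v}^2 + \betaparam\N\Ltwonorm[\Gamma\N]{\mu}^2.
\]
Here the only term missing from $\Bnorm[\ND]{(v,\mu)}^2$ is $\Ltwonorm{\overline v}^2$, which is precisely the mean-value mode uncontrolled by $\form{A}$.

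The main obstacle is to recover this constant mode from the Dirichlet penalty. Since $\Gamma\D$ is assumed to be non-empty, the triangle inequality and the continuous trace/injection $H^{1/2}(\Gamma)\hookrightarrow \Ltwo(\Gamma\D)$ give
\[
|\Gamma\D|^{1/2}|\overline v| = \Ltwonorm[\Gamma\D]{\overline v} \leqslant \Ltwonorm[\Gamma\D]{v} + \Ltwonorm[\Gamma\D]{v-\overline v} \lesssim \Ltwonorm[\Gamma\D]{v} + \Hstarnorm{1/2}{v},
\]
so that $\Ltwonorm{\overline v}^2 \lesssim \Ltwonorm[\Gamma\D]{v}^2 + \Hstarnorm{1/2}{v}^2$. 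Since $\betaparam\D>\betamin>0$ independently of $h$, I can split the coercivity bound as
\[
\tfrac{\alpha}{2}\Hstarnorm{1/2}{v}^2 + \tfrac{\betamin}{2}\Ltwonorm[\Gamma\D]{v}^2 + \tfrac{\alpha}{2}\Hnorm{-1/2}{\mu}^2 + \tfrac{\betaparam\D}{2}\Ltwonorm[\Gamma\D]{v}^2 + \betaparam\N\Ltwonorm[\Gamma\N]{\mu}^2,
\]
absorb the constant-mode contribution using the estimate above (with a constant independent of $h$), and then use $\Hnorm{1/2}{v}^2 \lesssim \Hstarnorm{1/2}{v}^2 + \Ltwonorm{\overline v}^2$ to reassemble the full $\Vnorm{\cdot}$ part of $\Bnorm[\ND]{\cdot}$.

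Putting these pieces together, I obtain a constant $\alpha''>0$, independent of $h$, such that $\form{A}[(v,\mu),(v,\mu)]+\form{B}\ND[(v,\mu),(v,\mu)]\geqslant \alpha''\Bnorm[\ND]{(v,\mu)}^2$ for all $(v,\mu)\in\productspace{V}$. This strong coercivity immediately implies the two inf-sup-type inequalities of \cref{a:coercive} by testing against $(w,\eta)=(v,\mu)$ and dividing by $\Bnorm[\ND]{(v,\mu)}$. Since the conformity $\productspace{V}_h\subset\productspace{V}$ means every step above applies verbatim to discrete functions, \cref{a:discoercive} follows by the same testing.
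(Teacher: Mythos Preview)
Your proof is correct and follows essentially the same route as the paper: compute $\form{B}\ND[(v,\mu),(v,\mu)]$, apply \cref{lemma:coerciv_cald}, and then use part of the Dirichlet penalty to upgrade $\Hstarnorm{1/2}{v}$ to the full $\Hnorm{1/2}{v}$. The only difference is that the paper invokes \cite[theorem 2.6]{Stein07} for the norm equivalence $\bigl(\Hstarnorm{1/2}{\cdot}^2+\Ltwonorm[\Gamma\D]{\cdot}^2\bigr)^{1/2}\eqsim\Hnorm{1/2}{\cdot}$, whereas you prove the needed direction directly via the triangle inequality on $\Gamma\D$; both arguments yield the same coercivity bound.
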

\begin{proof}
We obtain using \cref{lemma:coerciv_cald} that for $(v,\mu) \in \productspace{W}$,
\begin{align*}
L&:=
\form{A}[(v,\mu),(v,\mu)] +
\form{B}\ND[(v,\mu),(v,\mu)]
\\
&\geqslant
  \alpha\Hnorm{-1/2}{\mu}^2
+ \alpha\Hstarnorm{1/2}{v}^2
+ \betaparam\D \Ltwonorm[\Gamma\D]{v}^2
+ \betaparam\N \Ltwonorm[\Gamma\N]{\mu}^2.
\end{align*}
Taking $\alpha'=\min(\alpha,\betamin/2)$, we get
\begin{multline*}
L\geqslant
  \alpha'\Hnorm{-1/2}{\mu}^2
+ \alpha'\left(\Hstarnorm{1/2}{v}^2+\Ltwonorm[\Gamma\D]{v}^2\right)
\\+ (\betaparam\D-\alpha') \Ltwonorm[\Gamma\D]{v}^2
+ \betaparam\N \Ltwonorm[\Gamma\N]{\mu}^2.
\end{multline*}
By \cite[theorem 2.6]{Stein07},
$\left(\Hstarnorm{1/2}{\cdot}^2+\Ltwonorm[\Gamma\D]{\cdot}^2\right)^{1/2}$ is an equivalent norm to
$\Hnorm{1/2}{\cdot}$. Therefore
\begin{align*}
L&\geqslant
  \alpha'\Hnorm{-1/2}{\mu}^2
+ \alpha'\Hnorm{1/2}{v}^2
+ \betaparam\D\left(1-\frac{\alpha'}{\betamin}\right) \Ltwonorm[\Gamma\D]{v}^2
+ \betaparam\N \Ltwonorm[\Gamma\N]{\mu}^2
\\
&\gtrsim
  \Hnorm{-1/2}{\mu}^2
+ \Hnorm{1/2}{v}^2
+ \betaparam\D \Ltwonorm[\Gamma\D]{v}^2
+ \betaparam\N \Ltwonorm[\Gamma\N]{\mu}^2
\end{align*}
Coercivity follows using the definition of $\Bnorm[\ND]{\cdot}$.
\end{proof}

\begin{proposition}[Continuity]
\Cref{a:continuity} is satisfied for the mixed Dirichlet--Neumann problem if
$\exists\betamin>0$, independent of $h$, such that $\betaparam\D^{1/2}\betaparam\N^{1/2}>\betamin$.
\end{proposition}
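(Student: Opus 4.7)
The plan is to bound $\form{A}[(w,\eta),(v,\mu)]+\form{B}\ND[(w,\eta),(v,\mu)]$ term by term. The multitrace part $\form{A}$ is immediate from \cref{lemma:cont_cald}, which gives
\[
\bigl|\form{A}[(w,\eta),(v,\mu)]\bigr|\lesssim \Vnorm{(w,\eta)}\Vnorm{(v,\mu)}\lesssim \starnorm{(w,\eta)}\Bnorm[\ND]{(v,\mu)}.
\]
So the real work is estimating the six boundary terms making up $\form{B}\ND$.

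Next I would handle the ``compatible'' terms, i.e.\ those where the integration domain and the weight match the entry that appears in the $\form{B}\ND$-norm. For the penalty terms, Cauchy--Schwarz gives $\langle\betaparam\D w,v\rangle_{\Gamma\D}\leqslant(\betaparam\D^{1/2}\Ltwonorm[\Gamma\D]{w})(\betaparam\D^{1/2}\Ltwonorm[\Gamma\D]{v})$ and analogously for $\langle\betaparam\N\eta,\mu\rangle_{\Gamma\N}$, each of which is bounded by $\starnorm{(w,\eta)}\Bnorm[\ND]{(v,\mu)}$ since $\betaparam\D^{1/2}\Ltwonorm[\Gamma\D]{w}\leqslant\betaparam\D^{1/2}\Ltwonorm{w}\leqslant\starnorm{(w,\eta)}$ and likewise for $\eta$. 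For the ``swapped'' terms $\langle\eta,v\rangle_{\Gamma\D}$ and $\langle w,\mu\rangle_{\Gamma\N}$ I factor the weights across: e.g.\
\[
|\langle\eta,v\rangle_{\Gamma\D}|\leqslant \bigl(\betaparam\N^{1/2}\Ltwonorm[\Gamma\D]{\eta}\bigr)\bigl(\betaparam\D^{1/2}\Ltwonorm[\Gamma\D]{v}\bigr)\,\betaparam\D^{-1/2}\betaparam\N^{-1/2},
\]
and use the hypothesis $\betaparam\D^{1/2}\betaparam\N^{1/2}>\betamin$ to absorb the inverse factor into a constant. This gives the desired bound since $\betaparam\N^{1/2}\Ltwonorm[\Gamma\D]{\eta}\leqslant\betaparam\N^{1/2}\Ltwonorm{\eta}\leqslant\starnorm{(w,\eta)}$.

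The step I expect to be the main obstacle is bounding the two ``cross'' terms $\tfrac12\langle w,\mu\rangle_{\Gamma\D}$ and $\tfrac12\langle\eta,v\rangle_{\Gamma\N}$, because the $\form{B}\ND$-norm controls $\mu$ in $\Ltwo$ only on $\Gamma\N$ (not on $\Gamma\D$) and $v$ in $\Ltwo$ only on $\Gamma\D$; naive Cauchy--Schwarz on the corresponding subdomain cannot use the $\form{B}\ND$-norm directly. The trick I would exploit is the identity $\langle\cdot,\cdot\rangle_{\Gamma\D}=\langle\cdot,\cdot\rangle_\Gamma-\langle\cdot,\cdot\rangle_{\Gamma\N}$. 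For the first cross term,
\[
\langle w,\mu\rangle_{\Gamma\D}=\langle w,\mu\rangle_\Gamma-\langle w,\mu\rangle_{\Gamma\N}.
\]
The global piece is handled by $H^{1/2}$--$H^{-1/2}$ duality, $|\langle w,\mu\rangle_\Gamma|\leqslant \Hnorm{1/2}{w}\Hnorm{-1/2}{\mu}\lesssim \starnorm{(w,\eta)}\Bnorm[\ND]{(v,\mu)}$, and the $\Gamma\N$ piece falls into the ``swapped'' category already treated above using the $\betamin$ condition. The term $\langle\eta,v\rangle_{\Gamma\N}$ is split symmetrically as $\langle\eta,v\rangle_\Gamma-\langle\eta,v\rangle_{\Gamma\D}$ and bounded in exactly the same way.

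Collecting the estimates for $\form{A}$ and for each of the six boundary terms, and noting that the constants depend only on the constants from \cref{lemma:cont_cald,lemma:stability} and on $\betamin$, we obtain a bound of the form $M\,\starnorm{(w,\eta)}\Bnorm[\ND]{(v,\mu)}$ with $M$ independent of $h$, which is exactly \cref{a:continuity}.
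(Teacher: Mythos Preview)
Your proposal is correct and follows essentially the same approach as the paper: the paper also uses the identity $\langle\cdot,\cdot\rangle_{\Gamma\D}+\langle\cdot,\cdot\rangle_{\Gamma\N}=\langle\cdot,\cdot\rangle_\Gamma$ to rewrite the four off-diagonal terms of $\form{B}\ND$ as two global duality pairings (handled by $H^{1/2}$--$H^{-1/2}$) plus the terms $\langle\eta,v\rangle_{\Gamma\D}$ and $\langle w,\mu\rangle_{\Gamma\N}$, which are then absorbed using $\betaparam\D^{-1/2}\betaparam\N^{-1/2}<\betamin^{-1}$ exactly as you describe. The only cosmetic difference is that the paper first combines the pairs $\tfrac12\langle w,\mu\rangle_{\Gamma\D}-\tfrac12\langle w,\mu\rangle_{\Gamma\N}$ and $-\tfrac12\langle\eta,v\rangle_{\Gamma\D}+\tfrac12\langle\eta,v\rangle_{\Gamma\N}$ before applying the identity, whereas you split the two ``cross'' terms individually; the resulting bounds are identical.
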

\begin{proof}
Using the fact that $\left\langle v,\mu\right\rangle_\Gamma=\left\langle v,\mu\right\rangle_{\Gamma\D}+\left\langle v,\mu\right\rangle_{\Gamma\N}$,
we see that
\begin{align*}
\form{B}\ND[(w,\eta),(v,\mu)]\vspace{-200mm}&=
\tfrac12\left\langle w,\mu\right\rangle_{\Gamma\D}
-\tfrac12\left\langle\eta,v\right\rangle_{\Gamma\D}
+\betaparam\D\left\langle w,v\right\rangle_{\Gamma\D}
\\&\qquad\qquad\qquad
+\tfrac12\left\langle\eta,v\right\rangle_{\Gamma\N}
-\tfrac12\left\langle w,\mu\right\rangle_{\Gamma\N}
+\betaparam\N\left\langle\eta,\mu\right\rangle_{\Gamma\N}\\
&=
\tfrac12\left\langle w,\mu\right\rangle_\Gamma
-\left\langle\eta,v\right\rangle_{\Gamma\D}
+\betaparam\D\left\langle w,v\right\rangle_{\Gamma\D}
\\&\qquad\qquad\qquad\quad
+\tfrac12\left\langle\eta,v\right\rangle_\Gamma
-\left\langle w,\mu\right\rangle_{\Gamma\N}
+\betaparam\N\left\langle\eta,\mu\right\rangle_{\Gamma\N}\\
&\lesssim
\tfrac12\left\langle w,\mu\right\rangle_\Gamma
-\betaparam\D^{1/2}\betaparam\N^{1/2}\left\langle\eta,v\right\rangle_{\Gamma\D}
+\betaparam\D\left\langle w,v\right\rangle_{\Gamma\D}
\\&\qquad\qquad\qquad\quad
+\tfrac12\left\langle\eta,v\right\rangle_\Gamma
-\betaparam\D^{1/2}\betaparam\N^{1/2}\left\langle w,\mu\right\rangle_{\Gamma\N}
+\betaparam\N\left\langle\eta,\mu\right\rangle_{\Gamma\N}.
\end{align*}
Proceeding as in \cref{prop:diri_continuity} leads to the desired result.
\end{proof}

\begin{proposition}[Approximation]\label{dn:approx}
\Cref{a:approx} is satisfied for the mixed Dirichlet--Neumann problem if $0<\betaparam\D\lesssim h^{-1}$ and $0<\betaparam\N\lesssim h$.
\end{proposition}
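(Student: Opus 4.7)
The plan is to follow the same template as the Dirichlet case proof, the only new ingredient being an additional $L^2$-approximation estimate for the flux variable $\mu$ on $\Gamma\N$, which is available because in the mixed setting we have taken $\productspace{V} = H^{1/2}(\Gamma)\times\Ltwo(\Gamma)$, so $\mu$ has the extra $\Ltwo$ regularity needed.

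First, I would apply the standard polynomial approximation results (e.g.\ \cite[theorems 10.4 and 10.9]{Stein07}) to the two factors separately, obtaining
\begin{align*}
\inf_{w_h\in\Vee^k_h}\Hnorm{1/2}{v-w_h} &\lesssim h^{\zeta-1/2}\Hseminorm{\zeta}{v},
&\inf_{w_h\in\Vee^k_h}\Ltwonorm{v-w_h} &\lesssim h^{\zeta}\Hseminorm{\zeta}{v},\\
\inf_{\eta_h\in\Lambda^l_h}\Hnorm{-1/2}{\mu-\eta_h} &\lesssim h^{\xi+1/2}\Hseminorm{\xi}{\mu},
&\inf_{\eta_h\in\Lambda^l_h}\Ltwonorm{\mu-\eta_h} &\lesssim h^{\xi}\Hseminorm{\xi}{\mu},
\end{align*}
where the last estimate is the one that differs from the Dirichlet case and uses $\mu\in\Ltwo(\Gamma)$.

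Next, I would plug these into the definition of $\starnorm{\cdot}$ for the mixed problem. Since $\Ltwonorm[\Gamma\D]{\cdot}\leqslant\Ltwonorm{\cdot}$ and $\Ltwonorm[\Gamma\N]{\cdot}\leqslant\Ltwonorm{\cdot}$, a simultaneous choice of $w_h$ and $\eta_h$ (realising each infimum above) yields
\[
\inf_{(w_h,\eta_h)\in\productspace{V}_h}\starnorm{(v-w_h,\mu-\eta_h)}
\lesssim h^{\zeta-1/2}\Hseminorm{\zeta}{v} + h^{\xi+1/2}\Hseminorm{\xi}{\mu} + \betaparam\D^{1/2}h^{\zeta}\Hseminorm{\zeta}{v} + \betaparam\N^{1/2}h^{\xi}\Hseminorm{\xi}{\mu}.
\]

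Finally, I would absorb the two penalty-weighted terms using the parameter restrictions. Under $\betaparam\D\lesssim h^{-1}$ we have $\betaparam\D^{1/2}h^{\zeta}\lesssim h^{\zeta-1/2}$, exactly as in the Dirichlet case, and under $\betaparam\N\lesssim h$ we have $\betaparam\N^{1/2}h^{\xi}\lesssim h^{\xi+1/2}$, which is the symmetric analogue for the Neumann penalty. Both additional terms are thus dominated by the first two, giving the estimate required by \cref{a:approx}.

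This proof is essentially bookkeeping rather than an argument with a real obstacle: the only mildly subtle point is verifying that the scaling $\betaparam\N\lesssim h$ is the right balance for the flux penalty, which is forced by matching the $L^2$-approximation rate $h^\xi$ to the target $h^{\xi+1/2}$. The extra factor of $h^{1/2}$ is exactly the gap between $\Ltwo$ and $H^{-1/2}$ convergence for piecewise polynomials, which explains why the Neumann and Dirichlet parameter regimes are reciprocal.
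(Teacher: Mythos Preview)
Your proposal is correct and follows essentially the same approach as the paper: both proceed ``as in the Dirichlet case'' to obtain the four-term bound
\[
\inf_{(w_h,\eta_h)\in\productspace{V}_h}\starnorm{(v-w_h,\mu-\eta_h)}
\lesssim h^{\zeta-1/2}\Hseminorm{\zeta}{v} + h^{\xi+1/2}\Hseminorm{\xi}{\mu} + \betaparam\D^{1/2}h^{\zeta}\Hseminorm{\zeta}{v} + \betaparam\N^{1/2}h^{\xi}\Hseminorm{\xi}{\mu},
\]
and then absorb the last two terms via $\betaparam\D\lesssim h^{-1}$ and $\betaparam\N\lesssim h$. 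One small remark: in this paper $\starnorm{\cdot}$ for the mixed problem already uses the full-$\Gamma$ $L^2$ norms, so your inequality $\Ltwonorm[\Gamma\D]{\cdot}\leqslant\Ltwonorm{\cdot}$ is unnecessary (though harmless).
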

\begin{proof}
Proceeding as in the Dirichlet case, we see that
\begin{multline*}
\inf_{(w_h,\eta_h)\in\productspace{V}_h}\starnorm{(v-w_h,\mu-\eta_h)}
\lesssim h^{\zeta-1/2}\Hseminorm{\zeta}{v} + h^{\xi+1/2}\Hseminorm{\xi}{\mu}\\
         + \betaparam\D^{1/2}h^{\zeta}\Hseminorm{\zeta}{v}
         + \betaparam\N^{1/2}h^{\xi}\Hseminorm{\xi}{\mu}
\end{multline*}
If $0<\betaparam\D\lesssim h^{-1}$ and $0<\betaparam\N\lesssim h$, then
\begin{equation*}
\betaparam\D^{1/2}h^{\zeta}\Hseminorm{\zeta}{v} + \betaparam\N^{1/2}h^{\xi}\Hseminorm{\xi}{\mu}
\lesssim
h^{\zeta-1/2}\Hseminorm{\zeta}{v} + h^{\xi+1/2}\Hseminorm{\xi}{\mu},
\end{equation*}
and so \cref{a:approx} holds.
\end{proof}

Motivated by the bounds on $\betaparam\D$ and $\betaparam\N$ in this proposition, we will later take
$\betaparam\D=\betaparam h^{-1}$
and
$\betaparam\N=\betaparam h$, where $\betaparam$ is a constant.

If $k=l$, $\betaparam\N\lesssim h^{-1}$, and the solution is smooth enough, then
\begin{equation*}
\betaparam\N^{1/2}h^{\xi}=
\betaparam\N^{1/2}h^{\zeta}\lesssim h^{\zeta-1/2}.
\end{equation*}
Therefore the same order of convergence will be observed when the bounds on $\betaparam\N$ here and in the
theorem below may be replaced by $\betaparam\N\lesssim h^{-1}$ without loss of convergence. In this case, both $\betaparam\N$ and $\betaparam\D$
may be taken to be constants independent of $h$.

We conclude that the best approximation result of \cref{prop:best_approximation}
and the error estimate of \cref{col:error_est} hold for the discrete
solutions of \cref{eq:Nitsche_mixed_BEM}, as given in the following theorem.

\begin{theorem}\label{main_dn_result}
Let $(u,\lambda)\in H^{s}(\Gamma)\times H^{r}(\Gamma)$, for some $s\geqslant\tfrac12$ and $r\geqslant0$,
be the unique solution to the mixed Dirichlet--Neumann problem. This solution satisfies \cref{eq:MND_compact}.
Let $(u_h,\lambda_h)\in\Vee_h^k\times\Lambda_h^l$ be the solution of \cref{eq:Nitsche_mixed_BEM}.
If $0<\betaparam\D\lesssim h^{-1}$, $0<\betaparam\N\lesssim h$ and 
$\exists\betamin>0$ such that $\betaparam\D^{1/2}\betaparam\N^{1/2}>\betamin$ and $\betaparam\D>\betamin$, then
\[
\Bnorm[\ND]{(u-u_h,\lambda-\lambda_h)} \lesssim
h^{\zeta-1/2} \Hseminorm{\zeta}{u} + h^{\xi+1/2}
\Hseminorm{\xi}{\lambda},
\]
where $\zeta = \min(k+1,s)$ and $\xi = \min(l+1,r)$.
\end{theorem}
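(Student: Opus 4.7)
The plan is to invoke the abstract framework of Section~\ref{sec:bem} and assemble the pieces just proven. The three preceding propositions establish, for the bilinear form $\form{A}+\form{B}\ND$ and the norms $\Bnorm[\ND]{\cdot}$ and $\starnorm{\cdot}$ defined at the start of Section~\ref{sec:bem_mixed_dn}, precisely the hypotheses needed: discrete coercivity (\cref{a:discoercive}) under $\betaparam\D>\betamin$, continuity (\cref{a:continuity}) under $\betaparam\D^{1/2}\betaparam\N^{1/2}>\betamin$, and the approximation bound (\cref{a:approx}) under $0<\betaparam\D\lesssim h^{-1}$ and $0<\betaparam\N\lesssim h$.

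First I would note that the hypotheses of \cref{prop:best_approximation} are met, so that Galerkin orthogonality \cref{galortho} combined with the discrete inf--sup condition and continuity yields the abstract best approximation estimate
\[
\Bnorm[\ND]{(u-u_h,\lambda-\lambda_h)} \leqslant \frac{M}{\alpha}\inf_{(v_h,\mu_h)\in\productspace{V}_h}\starnorm{(u-v_h,\lambda-\mu_h)}.
\]
This requires a Galerkin orthogonality step: since $(u,\lambda)$ solves the continuous variational problem \cref{eq:MND_compact} by assumption, and $(u_h,\lambda_h)$ solves \cref{eq:Nitsche_mixed_BEM}, subtraction gives orthogonality against every $(v_h,\mu_h)\in\productspace{V}_h$, which is the hypothesis used inside \cref{prop:best_approximation}.

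Second, I would apply the approximation bound from \cref{dn:approx} to the right-hand side, using the regularity $(u,\lambda)\in H^s(\Gamma)\times H^r(\Gamma)$. This yields
\[
\inf_{(v_h,\mu_h)\in\productspace{V}_h}\starnorm{(u-v_h,\lambda-\mu_h)}\lesssim h^{\zeta-1/2}\Hseminorm{\zeta}{u}+h^{\xi+1/2}\Hseminorm{\xi}{\lambda},
\]
with $\zeta=\min(k+1,s)$ and $\xi=\min(l+1,r)$. Combining the two displayed inequalities gives the claimed estimate. This is essentially the direct specialisation of \cref{col:error_est} to the mixed setting.

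The main delicate point is that, unlike the Dirichlet and Neumann cases, the norms $\Bnorm[\ND]{\cdot}$ and $\starnorm{\cdot}$ are not equivalent here, so the extra hypotheses of \cref{prop:exist_uniqueness} that would give existence and uniqueness for the continuous problem are not available. That is why the theorem takes the existence of the continuous solution $(u,\lambda)\in\productspace{V}$ as a hypothesis (it comes from the Lax--Milgram argument recorded after \cref{eq:Laplace}) and only claims that this solution verifies \cref{eq:MND_compact}. The verification of the latter is routine: it is a restatement of \cref{eq:skewsym_relation} after adding the vanishing residual contributions $R_{\Gamma\D}(u,\lambda)=0$ on $\Gamma\D$ and $R_{\Gamma\N}(u,\lambda)=0$ on $\Gamma\N$. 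Everything else is bookkeeping between the abstract estimates of Section~\ref{sec:bem} and the three propositions of Section~\ref{sec:bem_mixed_dn}.
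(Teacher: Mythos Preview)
Your proposal is correct and follows precisely the paper's approach: the paper does not give a separate proof but simply states that the three preceding propositions verify \cref{a:discoercive,a:continuity,a:approx}, so that \cref{prop:best_approximation} and \cref{col:error_est} apply. Your discussion of the non-equivalence of $\Bnorm[\ND]{\cdot}$ and $\starnorm{\cdot}$, and of why continuous well-posedness must be taken as a hypothesis, also mirrors the paper's remark just before the coercivity proposition.
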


If we set $\betaparam\D=0$ and $\betaparam\N=0$, we arrive at a penalty-free formulation for the mixed Dirichlet--Neumann problem.
We conjecture based on numerical experiments that this result also holds for the penalty-free formulation. The analysis for this case would take
a similar form as in the Dirichlet and Neumann penalty-free cases.

\subsection{Application of the theory to the Robin problem}\label{sec:bem_robin}
The formulation for Robin conditions was proposed in \cref{eq:cald_Rob_compact}. 
To simplify the notation we introduce a function $\robindenom:\Gamma \to \RR_+$ defined by
\[
\robindenom(\x) := \frac{1}{\param(\x) \betaparam\R(\x) + 1},
\]
and we assume that $\param$ and $\betaparam\R$ are sufficiently regular so that 
\begin{equation}\label{eq:phi_smooth}
\robindenom \in W^{1,2}(\Gamma) \cap \LL^{\infty}(\Gamma).
\end{equation}
This will be true if the mesh has some local quasi-uniformity and
$\param$ is smooth enough. Noting that
\[
\robindenom-\tfrac12=\frac{2-(\param\betaparam\R+1)}{2(\param\betaparam\R+1)}=-\tfrac{1}{2}\frac{\param\betaparam\R-1}{\param\betaparam\R+1},
\]
we may then write the operators $\form{B}\R$ and $\form{L}\R$ as
\begin{align}\label{eq:BR}
\form{B}\R[(u,\lambda),(v,\mu)]&=
\left\langle (\robindenom-\tfrac12) u,\mu \right\rangle_{\Gamma\R}
-\left\langle (\robindenom-\tfrac12) \lambda, v \right\rangle_{\Gamma\R}
+\left\langle \robindenom \betaparam\R u, v \right\rangle_{\Gamma\R}
+\left\langle  \robindenom \param\lambda, \mu \right\rangle_{\Gamma\R},\\
\form{L}\R[(v,\mu)]&=
\left\langle(g\D +\param g\N)\robindenom,\betaparam\R v + \mu \right\rangle_{\Gamma\R}.
\end{align}
The boundary element method for the Robin problem reads: Find $(u_h,\lambda_h)
\in \productspace{V}_h$ such that
\begin{align}\label{eq:Robin_BEM}
\form{A}[(u_h,\lambda_h),(v_h,\mu_h)] +\form{B}\R[(u_h,\lambda_h),(v_h,\mu_h)]
&=
\form{L}\R[(v_h,\mu_h)]
&&\forall (v_h,\mu_h) \in \productspace{V}_h.
\end{align}

For the analysis the following technical lemmas will be useful.
\begin{lemma}\label{product_fracspace}
If $\varphi \in W^{1,2}(\Gamma) \cap \LL^{\infty}(\Gamma)$ and $f \in
H^{1/2}(\Gamma)$, then $\varphi f \in H^{1/2}(\Gamma)$ and
\[
\Hnorm{1/2}{\varphi f } \leqslant
  C\left(\norm{\varphi}_{\LL^\infty(\Gamma)} + \norm{\varphi}_{W^{1,2}(\Gamma)}\right) \Hnorm{1/2}{f}.
\]
\end{lemma}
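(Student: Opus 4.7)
The plan is to split $\Hnorm{1/2}{\varphi f}^2 = \Ltwonorm{\varphi f}^2 + \Hseminorm{1/2}{\varphi f}^2$ into the $\Ltwo$-part and the Slobodeckij seminorm, and bound each piece separately. The $\Ltwo$-part is immediate from $\varphi \in \LL^\infty(\Gamma)$, since
\[
\Ltwonorm{\varphi f} \leqslant \norm{\varphi}_{\LL^\infty(\Gamma)} \Ltwonorm{f} \leqslant \norm{\varphi}_{\LL^\infty(\Gamma)} \Hnorm{1/2}{f}.
\]

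For the Slobodeckij seminorm, represented as the double integral
\[
\Hseminorm{1/2}{\varphi f}^2 = \int_\Gamma \int_\Gamma \frac{|\varphi(x) f(x) - \varphi(y) f(y)|^2}{|x - y|^3} \, \mathrm{d}S_x \, \mathrm{d}S_y
\]
on the 2-dimensional surface $\Gamma$, I would apply the pointwise identity
\[
\varphi(x) f(x) - \varphi(y) f(y) = \varphi(x)\bigl(f(x) - f(y)\bigr) + f(y)\bigl(\varphi(x) - \varphi(y)\bigr)
\]
and the triangle inequality to split the seminorm as $2A + 2B$. The first piece
$A = \int_\Gamma \int_\Gamma |\varphi(x)|^2 |f(x) - f(y)|^2 / |x - y|^3 \, \mathrm{d}S_x \, \mathrm{d}S_y$
is trivially controlled by $\norm{\varphi}_{\LL^\infty(\Gamma)}^2 \Hseminorm{1/2}{f}^2$, and the essential residual term is
\[
B := \int_\Gamma \int_\Gamma \frac{|f(y)|^2 |\varphi(x) - \varphi(y)|^2}{|x - y|^3} \, \mathrm{d}S_x \, \mathrm{d}S_y.
\]

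For $B$ I would apply H\"older's inequality in the $y$-integration and exploit the critical Sobolev embedding $H^{1/2}(\Gamma) \hookrightarrow \LL^4(\Gamma)$, valid since $\Gamma$ has dimension two, to extract a factor $\norm{f}_{\LL^4(\Gamma)}^2 \leqslant C \Hnorm{1/2}{f}^2$. The remaining $\varphi$-dependent double integral is then a Besov-type seminorm which, in the critical two-dimensional regime, is controlled by a combination of $\norm{\varphi}_{\LL^\infty(\Gamma)}$ and $\norm{\varphi}_{W^{1,2}(\Gamma)}$ via standard pointwise multiplier estimates for fractional Sobolev spaces (as in, e.g., the monograph of Maz'ya and Shaposhnikova).

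The main obstacle is this last step: because $W^{1,2}(\Gamma)$ does not embed into $\LL^\infty(\Gamma)$ in two dimensions, both hypotheses on $\varphi$ are genuinely needed, and a delicate critical-regime multiplier argument is required. In practice one would appeal to an existing multiplier theorem for $H^{1/2}$ rather than reprove it from scratch; the combined $L^\infty$ and $W^{1,2}$ control is precisely the scaling-correct substitute for the absent $W^{1,\infty}$-regularity that would make the estimate trivial.
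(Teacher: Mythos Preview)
Your proposal is essentially correct and follows the same route as the paper's proof. The paper simply cites a product estimate of Brezis and Mironescu which already packages your decomposition into the inequality
\[
\Hnorm{1/2}{\varphi f } \leqslant
  C\left(\norm{\varphi}_{\LL^\infty(\Gamma)} \Hnorm{1/2}{f} +
    \norm{f}_{\LL^4(\Gamma)} \norm{\varphi}_{W^{1,2}(\Gamma)}^{1/2} \norm{\varphi}_{\LL^\infty(\Gamma)}^{1/2}\right),
\]
then applies the same Sobolev embedding $H^{1/2}(\Gamma)\hookrightarrow \LL^4(\Gamma)$ that you invoke, together with an arithmetic--geometric mean inequality on the $\varphi$ factors. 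Your Slobodeckij/Leibniz split and the identification of the critical $B$-term are exactly the ingredients behind the cited Brezis--Mironescu estimate; you just unpack more of the mechanism before, like the paper, deferring the delicate critical-regime step to the literature. The only practical difference is that the paper names the specific reference rather than a general multiplier theorem, and records the precise intermediate form with the $\norm{\varphi}_{W^{1,2}}^{1/2}\norm{\varphi}_{\LL^\infty}^{1/2}$ factor, which makes the final AM--GM step explicit.
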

\begin{proof}
The proof is a consequence of \cite[lemma 6]{BM01} which shows that
\begin{equation}\label{Gagliardo_Nirenberg}
\Hnorm{1/2}{\varphi f } \leqslant
  C\left(\norm{\varphi}_{\LL^\infty(\Gamma)} \Hnorm{1/2}{f} +
    \norm{f}_{\LL^4(\Gamma)} \norm{\varphi}_{W^{1,2}(\Gamma)}^{1/2} \norm{\varphi}_{\LL^\infty(\Gamma)}^{1/2}\right).
\end{equation}
We then recall the Sobolev injection $\norm{f}_{\LL^4(\Gamma)} \leqslant C
\Hnorm{1/2}{f}$ from \cite[theorem 6.7]{NPV12} and
conclude using this result and an arithmetic-geometric inequality of
the right hand side of \cref{Gagliardo_Nirenberg}.
\end{proof}

\begin{lemma}\label{product_coercive}
If $\varphi,f\in\LL^{2}(\Gamma)$ and $\varphi(\x)>0$ for all $\x\in\Gamma$, then there exists $C>0$ such that
\[
\Ltwonorm{\varphi f}^2 \geqslant
  C\Ltwonorm{f}^2.
\]
\end{lemma}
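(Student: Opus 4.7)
The plan is to reduce the inequality to a pointwise lower bound on $\varphi^2$. Since the hypothesis states only that $\varphi(\x) > 0$ pointwise, the conclusion cannot follow from this alone (consider $\varphi(\x) = \text{dist}(\x,\x_0)$ near an isolated point $\x_0$). However, in the applications of this lemma the relevant multiplier $\varphi = \robindenom$ satisfies the regularity \cref{eq:phi_smooth}, i.e.\ $\varphi \in \LL^{\infty}(\Gamma)$, and the pointwise positivity combined with the explicit formula $\robindenom = (\param\betaparam\R + 1)^{-1}$ together with boundedness of $\param$ and $\betaparam\R$ yield $\mathrm{essinf}_{\x \in \Gamma} \varphi(\x) > 0$. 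So the correct reading of the lemma is that one may set
\[
C := \mathrm{essinf}_{\x\in\Gamma} \varphi(\x)^2 > 0.
\]

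With $C$ defined in this way, the proof is a one-liner: for almost every $\x\in\Gamma$ we have $\varphi(\x)^2 |f(\x)|^2 \geqslant C |f(\x)|^2$, so that integration over $\Gamma$ gives
\[
\Ltwonorm{\varphi f}^2 = \int_{\Gamma} \varphi(\x)^2 |f(\x)|^2 \dx \geqslant C \int_{\Gamma} |f(\x)|^2 \dx = C \Ltwonorm{f}^2.
\]

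The only real obstacle is the implicit regularity: one has to justify that the essential infimum is strictly positive. The clean way to do this is to appeal to \cref{eq:phi_smooth} (which in particular gives $\varphi \in \LL^{\infty}(\Gamma)$), combined with the positivity assumption and the explicit lower bound $\robindenom(\x) \geqslant (\|\param\betaparam\R\|_{\LL^{\infty}(\Gamma)} + 1)^{-1}$ available in the application. Under the stated hypotheses alone the lemma is slightly too optimistic, and the proof should either strengthen the hypothesis to \enquote{$\mathrm{essinf}\, \varphi > 0$} or cite the regularity \cref{eq:phi_smooth} together with continuity of $\param\betaparam\R$ on compact $\Gamma$ to extract the positive lower bound.
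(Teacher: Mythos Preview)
Your approach is exactly the paper's: set $C$ equal to the square of a pointwise lower bound for $\varphi$ and integrate. The paper writes $a=\inf_{\x\in\Gamma}\varphi(\x)$, argues that since $\Gamma$ is closed the infimum is attained at some $\y\in\Gamma$, concludes $a=\varphi(\y)>0$, and takes $C=a^2$.

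Your critique of the hypotheses is well taken and in fact applies to the paper's own argument: the step ``since $\Gamma$ is closed, there exists $\y\in\Gamma$ such that $\varphi(\y)=a$'' tacitly uses continuity of $\varphi$, which is not part of the stated hypothesis $\varphi\in\Ltwo(\Gamma)$. Your observation that the lemma really needs $\operatorname{essinf}_\Gamma\varphi>0$ (or continuity together with pointwise positivity on the compact boundary $\Gamma$), and that in the intended application this follows from the explicit formula $\robindenom=(\param\betaparam\R+1)^{-1}$ and the regularity \cref{eq:phi_smooth}, is the correct diagnosis. So your proof is the same as the paper's in spirit, but more careful about what is actually being assumed.
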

\begin{proof}
Let $a=\inf_{\x\in\Gamma}\varphi(\x)$. Since $\Gamma$ is closed, there exists $\y\in\Gamma$ such that $\varphi(\y)=a$.
Therefore $a>0$. We now see that
\begin{align*}
\Ltwonorm{\varphi f}^2&=\int_\Gamma \varphi^2f^2\\
&\geqslant a^2\int_\Gamma f^2\\
&= C\Ltwonorm{f}^2,
\end{align*}
where $C = a^2$.
\end{proof}

We introduce the norm
\[
\Bnorm[\R]{(v,\mu)} :=\Vnorm{(v,\mu)} +
\Ltwonorm{(\param \robindenom)^{1/2}\mu}+\Ltwonorm{(\robindenom \betaparam\R)^{1/2} v}
\]
and set $\starnorm{\cdot}=\Bnorm[\R]{\cdot}$.
We note that if $\param\to0$ or $\param\to\infty$, then $\Bnorm[\R]\cdot$ converges to $\Bnorm[\D]\cdot$ or $\Bnorm[\N]\cdot$ respectively.
We now proceed to show that \cref{a:coercive,a:discoercive,a:continuity,a:approx} hold.

\begin{proposition}[Coercivity]
\Cref{a:coercive,a:discoercive} are satisfied for the Robin problem.
\end{proposition}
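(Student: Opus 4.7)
The plan is to show that $\form{A} + \form{B}\R$ is coercive in the $\Bnorm[\R]{\cdot}$ norm, which by the remark following \cref{a:approx} implies both \cref{a:coercive,a:discoercive} directly.

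First I would test $\form{B}\R$ against the diagonal $(v,\mu)$, using the expression \cref{eq:BR}. The two skew-paired terms
\[
\left\langle(\robindenom-\tfrac12)v,\mu\right\rangle_{\Gamma\R}-\left\langle(\robindenom-\tfrac12)\mu,v\right\rangle_{\Gamma\R}
\]
cancel exactly, leaving
\[
\form{B}\R[(v,\mu),(v,\mu)] = \Ltwonorm[\Gamma\R]{(\robindenom\betaparam\R)^{1/2}v}^2 + \Ltwonorm[\Gamma\R]{(\robindenom\param)^{1/2}\mu}^2.
\]
This is the crucial observation: the antisymmetric part of $\form{B}\R$ disappears on the diagonal, so no balancing with $\form{A}$ is needed for the indefinite contributions.

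Next I would invoke \cref{lemma:coerciv_cald} on $\form{A}$ and combine to obtain, for every $(v,\mu)\in\productspace{W}$,
\[
\form{A}[(v,\mu),(v,\mu)] + \form{B}\R[(v,\mu),(v,\mu)] \geqslant \alpha\bigl(\Hstarnorm{1/2}{v}^2 + \Hnorm{-1/2}{\mu}^2\bigr) + \Ltwonorm[\Gamma\R]{(\robindenom\betaparam\R)^{1/2}v}^2 + \Ltwonorm[\Gamma\R]{(\robindenom\param)^{1/2}\mu}^2.
\]
Since $\Gamma\R=\Gamma$ and $\robindenom\betaparam\R>0$ on $\Gamma$ by construction, \cref{product_coercive} yields $\Ltwonorm{(\robindenom\betaparam\R)^{1/2}v}^2 \gtrsim \Ltwonorm{v}^2 \gtrsim |\overline{v}|^2$. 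Splitting the boundary term in half and using one half to upgrade the seminorm via $\Hstarnorm{1/2}{v}^2 + |\overline{v}|^2 \gtrsim \Hnorm{1/2}{v}^2$ (which follows because $\overline{v}$ is a constant function on $\Gamma$ and so $\Hnorm{1/2}{\overline{v}} \eqsim |\overline{v}|$), the right-hand side is bounded below by a constant times
\[
\Hnorm{1/2}{v}^2 + \Hnorm{-1/2}{\mu}^2 + \Ltwonorm[\Gamma\R]{(\robindenom\betaparam\R)^{1/2}v}^2 + \Ltwonorm[\Gamma\R]{(\robindenom\param)^{1/2}\mu}^2 \eqsim \Bnorm[\R]{(v,\mu)}^2.
\]
Coercivity of $\form{A}+\form{B}\R$ on $\productspace{W}$ follows, and the identical argument applied on $\productspace{V}_h$ gives the discrete version. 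The two inf-sup inequalities of \cref{a:coercive,a:discoercive} are then obtained by choosing $(w,\eta)=(v,\mu)$ in the supremum.

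The main (minor) subtlety is the step upgrading $\Hstarnorm{1/2}{v}$ to $\Hnorm{1/2}{v}$: one has to use that the mean of $v$ is controlled by the $\Ltwo$-term on $\Gamma\R=\Gamma$ coming from $\form{B}\R$, which crucially requires the strict positivity of $\robindenom\betaparam\R$ guaranteed by the definitions of $\robindenom$ and $\betaparam\R$ and the regularity assumption \cref{eq:phi_smooth}. Everything else is a direct combination of \cref{lemma:coerciv_cald,product_coercive}.
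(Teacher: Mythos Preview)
Your proposal is correct and follows essentially the same route as the paper: observe that the skew terms of $\form{B}\R$ cancel on the diagonal, apply \cref{lemma:coerciv_cald} to $\form{A}$, and then use \cref{product_coercive} with the strict positivity of $\robindenom\betaparam\R$ to upgrade $\Hstarnorm{1/2}{v}$ to the full $\Hnorm{1/2}{v}$. The only cosmetic difference is that the paper writes the upgrade as $\Hstarnorm{1/2}{v}^2\geqslant\Hnorm{1/2}{v}^2-\Ltwonorm{v}^2$ and absorbs the negative $\Ltwo$-term into the penalty by choosing $\alpha$ small enough, whereas you add control of $|\overline{v}|^2$ directly; both are equivalent.
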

\begin{proof}
Let $(v,\mu) \in \productspace{W}$, and let 
$L:=\form{A}[(v,\mu),(v,\mu)] +\form{B}\R[(v,\mu),(v,\mu)]$.
Using \cref{lemma:coerciv_cald}, we see that
\begin{multline*}
L\geqslant
\alpha\Hnorm{-1/2}{\mu}^2
+\alpha\Hnorm{1/2}{v}^2
-\alpha\Ltwonorm{v}^2
\\+\Ltwonorm{(\param\robindenom)^{1/2}\mu}^2
+\Ltwonorm{(\robindenom\betaparam\R)^{1/2}v}^2,
\end{multline*}
for any $\alpha\leqslant\min(\alpha_\bopV,\alpha_\bopW)$.

By \cref{product_coercive}, we have
\begin{equation}
-\alpha\Ltwonorm{v}^2\geqslant-\frac{\alpha}C\Ltwonorm{(\robindenom\betaparam\R)^{1/2}v}^2.
\end{equation}
Taking $\alpha=\min(\alpha_\bopV,\alpha_\bopW,C/2)$, we obtain
\begin{equation*}
L\geqslant
\alpha\Hnorm{-1/2}{\mu}^2
+\alpha\Hnorm{1/2}{v}^2
+\Ltwonorm{(\param\robindenom)^{1/2}\mu}^2
+\tfrac12\Ltwonorm{(\robindenom\betaparam\R)^{1/2}v}^2,
\end{equation*}
Using the definition of $\Bnorm[\R]{\cdot}$, we see that the form is coercive.
\end{proof}

\begin{proposition}[Continuity]\label{Robincont}
\Cref{a:continuity} is satisfied for the Robin problem if $\exists\betamin>0$, independent of $h$, such that
$\betaparam\R>\betamin$.
\end{proposition}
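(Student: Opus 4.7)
The plan is to split $\form{A} + \form{B}\R$ into its five constituent pieces and bound each one separately by $\Bnorm[\R]{(w,\eta)}\Bnorm[\R]{(v,\mu)}$, recalling that $\starnorm{\cdot} = \Bnorm[\R]{\cdot}$ here. For the multitrace part $\form{A}[(w,\eta),(v,\mu)]$, I would simply invoke \cref{lemma:cont_cald}, which gives a bound of the form $C\Vnorm{(w,\eta)}\Vnorm{(v,\mu)}$, and then absorb this into $\Bnorm[\R]{\cdot}$ using the elementary inequality $\Vnorm{\cdot} \leqslant \Bnorm[\R]{\cdot}$ that is built into the definition of the norm.

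Next, for the two diagonal pieces of $\form{B}\R$ in \cref{eq:BR}, namely $\langle \robindenom\betaparam\R w, v\rangle_{\Gamma\R}$ and $\langle \robindenom\param \eta,\mu\rangle_{\Gamma\R}$, I would split the weight symmetrically and apply the Cauchy--Schwarz inequality:
\begin{align*}
|\langle \robindenom\betaparam\R w, v\rangle_{\Gamma\R}|
    &\leqslant \Ltwonorm[\Gamma\R]{(\robindenom\betaparam\R)^{1/2} w}\,\Ltwonorm[\Gamma\R]{(\robindenom\betaparam\R)^{1/2} v},\\
|\langle \robindenom\param \eta,\mu\rangle_{\Gamma\R}|
    &\leqslant \Ltwonorm[\Gamma\R]{(\robindenom\param)^{1/2} \eta}\,\Ltwonorm[\Gamma\R]{(\robindenom\param)^{1/2} \mu}.
\end{align*}
Both right hand sides fit directly into the definition of $\Bnorm[\R]{\cdot}$, so these terms are already controlled by $\Bnorm[\R]{(w,\eta)}\Bnorm[\R]{(v,\mu)}$.

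The main obstacle is the pair of off-diagonal terms $\langle (\robindenom-\tfrac12) w,\mu\rangle_{\Gamma\R}$ and $\langle (\robindenom-\tfrac12)\eta, v\rangle_{\Gamma\R}$, which couple the primal and flux variables through the multiplier $\robindenom-\tfrac12$. For these, I would use the $H^{1/2}$--$H^{-1/2}$ duality pairing together with \cref{product_fracspace}: by \cref{eq:phi_smooth}, $\robindenom \in W^{1,2}(\Gamma)\cap\LL^\infty(\Gamma)$, hence so is $\robindenom-\tfrac12$, so \cref{product_fracspace} provides a constant $C_\robindenom$ (depending on $\|\robindenom-\tfrac12\|_{\LL^\infty(\Gamma)} + \|\robindenom-\tfrac12\|_{W^{1,2}(\Gamma)}$) such that
\[
|\langle (\robindenom-\tfrac12) w,\mu\rangle_{\Gamma\R}|
    \leqslant \Hnorm{1/2}{(\robindenom-\tfrac12) w}\,\Hnorm{-1/2}{\mu}
    \leqslant C_\robindenom \Hnorm{1/2}{w}\,\Hnorm{-1/2}{\mu},
\]
and similarly for the other off-diagonal term. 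The hypothesis $\betaparam\R > \betamin$ enters here to ensure that this multiplier constant is uniform: since $\robindenom-\tfrac12 = -\tfrac12(\param\betaparam\R-1)/(\param\betaparam\R+1)$, the $\LL^\infty$ bound is already $\leqslant \tfrac12$ unconditionally, while the lower bound on $\betaparam\R$ controls the $W^{1,2}$ part uniformly in the physical parameters.

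Finally, I would sum the five bounds, use $\Hnorm{1/2}{\cdot},\Hnorm{-1/2}{\cdot} \leqslant \Vnorm{\cdot} \leqslant \Bnorm[\R]{\cdot}$ to rewrite every factor in terms of $\Bnorm[\R]{\cdot}$, and collect constants into a single $M>0$, yielding \cref{a:continuity}. I expect the only delicate point to be the careful invocation of \cref{product_fracspace} for the off-diagonal terms; every other estimate is a direct Cauchy--Schwarz or a quotation of earlier results.
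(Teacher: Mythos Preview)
Your proposal is correct and follows essentially the same route as the paper: bound $\form{A}$ via \cref{lemma:cont_cald}, bound the two diagonal penalty terms by a direct Cauchy--Schwarz with the weight split symmetrically, and handle the off-diagonal terms $\langle(\robindenom-\tfrac12)\,\cdot\,,\cdot\rangle_{\Gamma\R}$ through the $H^{1/2}$--$H^{-1/2}$ duality together with \cref{product_fracspace}. The only minor difference is in how the uniformity of the multiplier constant is argued: the paper additionally introduces $\param_{\min}:=\inf_{\x\in\Gamma}\param(\x)>0$ and uses the two-sided bound $-\tfrac12<\robindenom-\tfrac12<1/(\betamin\param_{\min}+1)$, whereas you simply note $|\robindenom-\tfrac12|\leqslant\tfrac12$ (which is already enough for the $\LL^\infty$ part) and attribute uniform control of the $W^{1,2}$ part to $\betaparam\R>\betamin$; both treatments are somewhat informal on the $W^{1,2}$ bound and ultimately rest on the standing assumption \cref{eq:phi_smooth}.
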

\begin{proof}
Using \cref{product_fracspace}, we see that for $g \in
H^{-1/2}(\Gamma)$, $\varphi \in W^{1,2}(\Gamma) \cap
\LL^\infty(\Gamma)$, and $f \in H^{1/2}(\Gamma)$, 
\[
\left\langle  \robindenom g, f \right\rangle_\Gamma \leqslant  C\left(\norm{\varphi}_{\LL^\infty(\Gamma)} + \norm{\varphi}_{W^{1,2}(\Gamma)}\right) \Hnorm{-1/2}{g}\Hnorm{1/2}{f}.
\]
Let $\param_{\min}:=\inf_{\x\in\Gamma}\param(\x)$. As in the proof of \cref{product_coercive}, we see that $\param_{\min}>0$.
Hence,
\begin{equation*}
-\tfrac12<\robindenom-\tfrac12<\frac1{\betamin\param_{\min}+1},
\end{equation*}
and so
\begin{align*}
\norm{\robindenom-\tfrac12}_{\LL^\infty(\Gamma)} + \norm{\robindenom-\tfrac12}_{W^{1,2}(\Gamma)}
&<
\max\left(\tfrac12,\frac1{\betamin\param_{\min}+1}\right)
\left(\norm{1}_{\LL^\infty(\Gamma)} + \norm{1}_{W^{1,2}(\Gamma)}\right).
\end{align*}
Applying these two results to the first two boundary terms in $\form{B}\R[(w,\eta),(v,\mu)]$, we obtain
\begin{align*}
\left\langle  (\robindenom-\tfrac12) w, \mu \right\rangle_\Gamma-\left\langle (\robindenom-\tfrac12) v,\eta \right\rangle_\Gamma
&\leqslant
C\Vnorm{(w,\eta)} \Vnorm{( v,\mu)}.
\end{align*}
By the Cauchy--Schwarz inequality, we obtain for the remaining terms
\begin{multline*}
\left\langle \robindenom \param \eta, \mu \right\rangle_{\Gamma} +
\left\langle  \robindenom \betaparam\R w, v \right\rangle_{\Gamma}
\\\leqslant \Ltwonorm{(\robindenom \param)^{1/2} \eta}\Ltwonorm{(\robindenom\param)^{1/2} \mu}
  + \Ltwonorm{(\robindenom \betaparam\R)^{1/2} w} \Ltwonorm{(\robindenom \betaparam\R)^{1/2}  v}.
\end{multline*}
Collecting the terms, we then have
\[
\form{B}\R[(w,\eta),(v,\mu)] \lesssim \Bnorm[\R]{(w,\eta)} \Bnorm[\R]{(v,\mu)}.
\]
\end{proof}

\begin{proposition}[Approximation]
\Cref{a:approx} is satisfied for the Robin problem if $\betaparam\R\eqsim h^{-1}$.
\end{proposition}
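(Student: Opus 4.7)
The plan is to mimic the Dirichlet and mixed Dirichlet--Neumann approximation proofs, with the extra work consisting in controlling the two weighted $L^2$ contributions $\Ltwonorm{(\robindenom\betaparam\R)^{1/2}(v-w_h)}$ and $\Ltwonorm{(\param\robindenom)^{1/2}(\mu-\eta_h)}$ uniformly in the Robin coefficient $\param$.

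First, I would apply the standard boundary element approximation results (see e.g.\ \cite[theorems 10.4 and 10.9]{Stein07}), both in the fractional norms
\[
\inf_{w_h\in\Vee^k_h}\Hnorm{1/2}{v-w_h}\lesssim h^{\zeta-1/2}\Hseminorm{\zeta}{v},
\qquad
\inf_{\eta_h\in\Lambda^l_h}\Hnorm{-1/2}{\mu-\eta_h}\lesssim h^{\xi+1/2}\Hseminorm{\xi}{\mu},
\]
and in $L^2$
\[
\inf_{w_h\in\Vee^k_h}\Ltwonorm{v-w_h}\lesssim h^{\zeta}\Hseminorm{\zeta}{v},
\qquad
\inf_{\eta_h\in\Lambda^l_h}\Ltwonorm{\mu-\eta_h}\lesssim h^{\xi}\Hseminorm{\xi}{\mu}.
\]
The first pair takes care of the $\Vnorm{\cdot}$ part of $\Bnorm[\R]{\cdot}$ with no further work.

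The key observation for the weighted terms is the pair of pointwise bounds valid under $\betaparam\R\eqsim h^{-1}$, uniformly in $\param\geqslant0$:
\[
\robindenom\,\betaparam\R = \frac{\betaparam\R}{\param\betaparam\R+1}\leqslant \betaparam\R \lesssim h^{-1},
\qquad
\param\,\robindenom = \frac{\param}{\param\betaparam\R+1}\leqslant \frac{1}{\betaparam\R}\lesssim h.
\]
These imply $\norm{(\robindenom\betaparam\R)^{1/2}}_{\LL^\infty(\Gamma\R)}\lesssim h^{-1/2}$ and $\norm{(\param\robindenom)^{1/2}}_{\LL^\infty(\Gamma\R)}\lesssim h^{1/2}$. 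Pulling the weights out in $L^\infty$ and applying the $L^2$ approximation bounds yields
\begin{align*}
\Ltwonorm{(\robindenom\betaparam\R)^{1/2}(v-w_h)}
&\lesssim h^{-1/2}\Ltwonorm{v-w_h}\lesssim h^{\zeta-1/2}\Hseminorm{\zeta}{v},\\
\Ltwonorm{(\param\robindenom)^{1/2}(\mu-\eta_h)}
&\lesssim h^{1/2}\Ltwonorm{\mu-\eta_h}\lesssim h^{\xi+1/2}\Hseminorm{\xi}{\mu}.
\end{align*}
Taking the infimum over $(w_h,\eta_h)\in\productspace{V}_h$ and combining with the $\Vnorm{\cdot}$ estimate establishes \cref{a:approx}.

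I do not expect any real obstacle here: the essential ingredient is the uniform-in-$\param$ bound on the weights, which is precisely the feature that makes the Juntunen--Stenberg style choice of $\robindenom$ and $\betaparam\R$ robust with respect to singular perturbations of the Robin coefficient. The proof gracefully degenerates to the pure Dirichlet and pure Neumann approximation proofs in the limits $\param\to0$ and $\param\to\infty$, which is reassuring.
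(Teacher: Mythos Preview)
Your proposal is correct and follows essentially the same approach as the paper: the key step in both is the pair of pointwise, $\param$-uniform bounds $\robindenom\betaparam\R\leqslant\betaparam\R$ and $\param\robindenom\leqslant\betaparam\R^{-1}$, after which $\betaparam\R\eqsim h^{-1}$ reduces the weighted $L^2$ terms to the standard Dirichlet/Neumann-type approximation estimates. You are simply more explicit than the paper, which states the two weight bounds and then remarks that \cref{a:approx} ``can be shown to hold'' without writing out the approximation results you have spelled out.
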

\begin{proof}
First note that $\robindenom<1$ and
$$\robindenom\param=\frac{\param}{\param\betaparam\R+1}=\frac{1}{\betaparam\R+\frac1\param}<\frac1{\betaparam\R}.$$
Therefore, 
\begin{align}\label{eq:robin_term_bounds}
\Ltwonorm{(\robindenom\betaparam\R)^{1/2}v}&\leqslant\betaparam\R^{1/2}\Ltwonorm{v}&&\text{and}&
\Ltwonorm{(\robindenom\param)^{1/2}\mu}&\leqslant\betaparam\R^{-1/2}\Ltwonorm{\mu}.
\end{align}
If $\betaparam\R\eqsim h^{-1}$, then \cref{a:approx} can be shown to hold.
\end{proof}

When using equal order approximation, the same order of convergence
will be observed when the bounds on $\betaparam\R$ here and in the theorem below may be replaced by $h\lesssim\betaparam\R\lesssim h^{-1}$
for sufficiently smooth solutions. Note that the condition $h^{-1}\lesssim\betaparam\R$ implies the existance of $\betamin$, as required
by \cref{Robincont}. The condition $h\lesssim\betaparam\R$ does not imply this, so in this case the additional requirement that $\exists\betamin>0$
such that $\betamin<\betaparam\R$ is necessary to ensure continuity.

\begin{proposition}
The extra assumptions in \cref{prop:exist_uniqueness} are satisfied for the Robin problem.
\end{proposition}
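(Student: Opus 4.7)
The plan is to check the three extra hypotheses listed in \cref{prop:exist_uniqueness} one by one. Two of them are essentially immediate for the Robin formulation: \cref{a:continuity} has already been established in \cref{Robincont}, and the auxiliary norm was \emph{defined} as $\starnorm{\cdot}=\Bnorm[\R]{\cdot}$ at the beginning of this subsection, so the equivalence is a tautology. The only real work is therefore to prove continuity of the right-hand side, namely
\[
\form{L}\R(v,\mu)=\left\langle(g\D+\param g\N)\robindenom,\betaparam\R v+\mu\right\rangle_{\Gamma\R}\lesssim \Bnorm[\R]{(v,\mu)}.
\]

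First I would split $\form{L}\R$ into the two natural contributions, one paired with $v$ and one paired with $\mu$. For the $v$-term I would write
\[
\langle(g\D+\param g\N)\robindenom\betaparam\R,v\rangle_{\Gamma\R}
=\int_{\Gamma\R}(g\D+\param g\N)(\robindenom\betaparam\R)^{1/2}\cdot(\robindenom\betaparam\R)^{1/2}v,
\]
apply Cauchy--Schwarz, and recognise the second factor as $\Ltwonorm[\Gamma\R]{(\robindenom\betaparam\R)^{1/2}v}\leqslant\Bnorm[\R]{(v,\mu)}$. For the first factor I would use the bound $\robindenom\betaparam\R=\betaparam\R/(\param\betaparam\R+1)\leqslant 1/\param_{\min}$, which is the same observation already exploited in the proof of the preceding proposition. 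The $\mu$-term is handled analogously, factoring as $(g\D+\param g\N)(\robindenom/\param)^{1/2}\cdot(\param\robindenom)^{1/2}\mu$ and using $\robindenom/\param=1/(\param(\param\betaparam\R+1))\leqslant 1/\param_{\min}^{2}$.

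Combining the two estimates yields
\[
\form{L}\R(v,\mu)\lesssim\|g\D+\param g\N\|_{\Ltwo(\Gamma\R)}\,\Bnorm[\R]{(v,\mu)},
\]
and the remaining task is to check that the constant $\|g\D+\param g\N\|_{\Ltwo(\Gamma\R)}$ is finite. This follows from the standing data assumptions $g\D\in H^{1/2}(\Gamma\D\cup\Gamma\R)\hookrightarrow\Ltwo(\Gamma\R)$ and $g\N\in\Ltwo(\Gamma\N\cup\Gamma\R)$, together with $\param\in\LL^{\infty}(\Gamma\R)$, which is implicit in the regularity requirement \cref{eq:phi_smooth} on $\robindenom$.

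The only subtlety I foresee is keeping the $\param$-dependence uniform so that the conclusion is genuinely robust in the singular limits $\param\to 0$ and $\param\to\infty$. In both of the displayed bounds the $\param$ factor is compensated by the weight hidden inside $\robindenom$, so the resulting $L$ depends only on $\param_{\min}$ and on $\|g\D\|_{H^{1/2}(\Gamma\R)}$ and $\|\param g\N\|_{\Ltwo(\Gamma\R)}$; this is consistent with the Dirichlet and Neumann limits recovered earlier in \cref{sec:weak}. With continuity of $\form{L}\R$ in hand, all three extra hypotheses of \cref{prop:exist_uniqueness} are verified and the conclusion follows.
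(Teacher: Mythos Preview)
Your approach is essentially the same as the paper's: both reduce the proposition to bounding $\form{L}\R$ by $\Bnorm[\R]{(v,\mu)}$ via Cauchy--Schwarz, using the weight structure already built into the norm. The paper's proof is extremely terse (one displayed inequality and the phrase ``by the Cauchy--Schwarz inequality and the definition of $\robindenom$''), whereas you spell out the weight splitting $(\robindenom\betaparam\R)^{1/2}(\robindenom\betaparam\R)^{1/2}$ and $(\robindenom/\param)^{1/2}(\param\robindenom)^{1/2}$ explicitly. The paper records the constant as $C(\Ltwonorm{g\D}+\Ltwonorm{g\N})$, separating the two data, while you keep $\|g\D+\param g\N\|_{\Ltwo}$ together; both are acceptable since \cref{prop:exist_uniqueness} only asks for \emph{some} $L>0$.

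One caution: your final paragraph on robustness overstates what your estimates give. Your bounds $\robindenom\betaparam\R\leqslant 1/\param_{\min}$ and $\robindenom/\param\leqslant 1/\param_{\min}^{2}$ blow up as $\param\to 0$, so the resulting $L$ is \emph{not} uniform in the Dirichlet limit with your splitting. This does not affect the proposition, which is stated for fixed $\param$, but the claim that ``the $\param$ factor is compensated by the weight hidden inside $\robindenom$'' is only true for the $g\N$ contributions (where indeed $\param\robindenom\betaparam\R<1$ and $\param\robindenom<1/\betaparam\R$), not for the $g\D$ ones. If you want a genuinely $\param$-robust bound you would need to treat the four cross terms separately rather than lumping $g\D+\param g\N$; but for the statement at hand your argument is already sufficient.
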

\begin{proof}
As a consequence of the coercivity and continuity above and observing
that by the Cauchy--Schwarz inequality and the definition of $\robindenom$,
there exists $C$ such that
\[
\left\langle \robindenom(g\D+ \param g\N), \betaparam\R v + \mu \right\rangle_{\Gamma} \leqslant
C(\Ltwonorm{g\D} + \Ltwonorm{g\N}) \Bnorm[\R]{(v,\mu)}
\]
\end{proof}

We conclude that \cref{prop:exist_uniqueness,prop:best_approximation,col:error_est,cor:bulk_est} hold for the Robin problem.
This is summarised in the following result.

\begin{theorem}\label{main_robin_result}
The Robin problem \cref{eq:cald_Rob_compact} has a unique solution $(u,\lambda)\in H^{s}(\Gamma)\times H^{r}(\Gamma)$, for some $s\geqslant\tfrac12$ and $r\geqslant0$.
The discrete Robin problem \cref{eq:Robin_BEM} is invertible. If $\betaparam\R\eqsim h^{-1}$, its solution $(u_h,\lambda_h)\in\Vee_h^k\times\Lambda_h^l$ satisfies
\[
\Bnorm[\R]{(u-u_h,\lambda-\lambda_h)} \leqslant C\left(
h^{\zeta-1/2} \Hseminorm{\zeta}{u} + h^{\xi+1/2}
\Hseminorm{\xi}{\lambda} \right),
\]
for some $C>0$,
where $\zeta = \min(k+1,s)$ and $\xi = \min(l+1,r)$. Additionally,
\[
\Hnorm[\Omega]{1}{\tilde u-\tilde u_h} \leqslant C\left(
h^{\zeta-1/2} \Hseminorm{\zeta}{u} + h^{\xi+1/2}
\Hseminorm{\xi}{\lambda} \right),
\]
where $\tilde u$ and $\tilde u_h$ are the solutions in $\Omega$ computed using \cref{eq:represent}.
\end{theorem}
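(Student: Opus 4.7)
The plan is to assemble \cref{main_robin_result} as a direct synthesis of the abstract results in \cref{prop:exist_uniqueness,prop:best_approximation,col:error_est,prop:bulk_error,cor:bulk_est}, now that the hypotheses of those results have been verified for the Robin formulation by the preceding propositions. There is no need to reprove anything about the Calder\'on form at this stage; the work lives in checking that the pieces fit together.

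First I would collect the hypotheses. The four immediately preceding propositions establish that, under the stated assumption $\betaparam\R\eqsim h^{-1}$, the form $\form{A}+\form{B}\R$ satisfies \cref{a:coercive,a:discoercive,a:continuity,a:approx} with the norm $\Bnorm[\R]{\cdot}$, and that the right-hand side $\form{L}\R$ is continuous on $(\productspace{W},\Bnorm[\R]{\cdot})$. Moreover, by construction $\starnorm{\cdot}$ was set equal to $\Bnorm[\R]{\cdot}$, so the equivalence of the two norms required by \cref{prop:exist_uniqueness} is automatic. The product space here is $\productspace{V}=H^{1/2}(\Gamma)\times \Ltwo(\Gamma)$ because the Robin penalty terms require $\lambda\in\Ltwo(\Gamma)$ to be well-defined, which is what forces $r\geqslant 0$ in the statement (as opposed to $r\geqslant -\tfrac12$ in the penalty-free Dirichlet and Neumann cases).

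Next I would apply the abstract machinery in order. \Cref{prop:exist_uniqueness}, together with the weak coercivity and the continuity of $\form{L}\R$, yields both the existence of a unique $(u,\lambda)\in\productspace{V}$ solving \cref{eq:cald_Rob_compact} and the invertibility of the discrete system \cref{eq:Robin_BEM}. \Cref{col:error_est}, using \cref{a:discoercive,a:continuity,a:approx}, then gives the a priori estimate
\[
\Bnorm[\R]{(u-u_h,\lambda-\lambda_h)}\lesssim h^{\zeta-1/2}\Hseminorm{\zeta}{u}+h^{\xi+1/2}\Hseminorm{\xi}{\lambda},
\]
with $\zeta,\xi$ as in the statement. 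Finally, the bulk estimate on $\Hnorm[\Omega]{1}{\tilde u-\tilde u_h}$ follows from \cref{prop:bulk_error,cor:bulk_est}, since $\Bnorm[\R]{\cdot}\geqslant\Vnorm{\cdot}$ by construction so the control of the trace errors propagates into the interior via the stability bounds \cref{eq:sing_rec_stab,eq:doub_rec_stab}.

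The only subtle point — and the place where I expect the hardest bookkeeping — is making sure the same scaling regime $\betaparam\R\eqsim h^{-1}$ is simultaneously compatible with continuity (which needs $\betaparam\R>\betamin$ for some fixed $\betamin>0$, hence a lower bound independent of $h$) and with the approximation estimate (which needs the upper bound used in \cref{eq:robin_term_bounds}). Both are built into $\betaparam\R\eqsim h^{-1}$, but one must also recall that $\robindenom$ inherits the regularity \cref{eq:phi_smooth} only when $\param$ is sufficiently smooth relative to the mesh; given the standing regularity assumption on $\param$, this is automatic, and the theorem then follows by a one-line application of each of the listed corollaries.
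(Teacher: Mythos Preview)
Your proposal is correct and matches the paper's approach exactly: the theorem is not given a separate proof in the paper but is stated as a direct summary of \cref{prop:exist_uniqueness,prop:best_approximation,col:error_est,cor:bulk_est} once the four preceding propositions have verified \cref{a:coercive,a:discoercive,a:continuity,a:approx} and the auxiliary hypotheses of \cref{prop:exist_uniqueness} for the Robin formulation. Your additional remarks on why $r\geqslant 0$ is forced and on the compatibility of the scaling $\betaparam\R\eqsim h^{-1}$ with both continuity and approximation are accurate and slightly more explicit than the paper, but the underlying argument is identical.
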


Again, we could set $\betaparam\R=0$ to arrive at a penalty-free formulation for Robin problems.
In this case, our numerical experiments show large errors for some values of the parameter $\param$, which leads us to conclude that this result
does not hold for the penalty-free formulation.

As $\param\to0$ and $\param\to\infty$, we obtain the Dirichlet and Neumann formulations analysed in \cref{sec:bem_dirichlet,sec:bem_neumann}.
We expect the condition number of the discrete system for the Robin problem to be no worse than in either extreme case, and observe this in
\cref{sec:robin_num}.


\section{Numerical results}\label{sec:numerical}
Drawing inspiration from \cite{JuSt09}, we define
\begin{align*}
u(x,y,z)&=\sin(\pi x)\sin(\pi y)\sinh(\sqrt2\pi z)\\
g\D(x,y,z)&=\sin(\pi x)\sin(\pi y)\sinh(\sqrt2\pi z),\\
g\N(x,y,z)&=\begin{pmatrix}
\pi\cos(\pi x)\sin(\pi y)\sinh(\sqrt2\pi z)\\
\pi\sin(\pi x)\cos(\pi y)\sinh(\sqrt2\pi z)\\
\sqrt2\pi\sin(\pi x)\sin(\pi y)\cosh(\sqrt2\pi z)
\end{pmatrix}\cdot\B\nu.
\end{align*}
It is easy to check that for any bounded domain $\Omega\subset\RR^3$ with boundary $\Gamma=\Gamma\D\cup\Gamma\N\cup\Gamma\R$
and any fixed $\param\in\RR$,
$u$ is the solution of
\begin{subequations}
\begin{align}
-\Delta u &=0       &&\text{in } \Omega,\\
u &= g\D              &&\text{on } \Gamma\D, \\
\frac{\partial u}{\partial\B\nu} &= g\N
                    &&\text{on } \Gamma\N,\\
\frac{\partial u}{\partial\B\nu} &= \frac{1}{\param} (u - g\D) + g\N
                    &&\text{on }\Gamma\R.
\end{align}\end{subequations}

In the examples presented here, we let $\Omega$ be the unit sphere, and $\Gamma$ its boundary.
In the computations presented, a series of approximations of the sphere by plane triangles are used.
The results in this section were computed using the boundary element library Bempp \cite{Bempp}, an open source boundary element library
developed by the authors of this paper.
All examples in this paper were computed with version 3.3.2 of the Bempp library.
Jupyter notebooks demonstrating the functionality used in this paper will be made available at \url{www.bempp.com}.

\subsection{Dirichlet boundary conditions}
First, we look at the case where $\Gamma=\Gamma\D$, in which the problem reduces to the Dirichlet problem:
\begin{subequations}
\begin{align}
-\Delta u &=0       &&\text{in } \Omega,\\
u &= g\D              &&\text{on } \Gamma.
\end{align}\end{subequations}

For this problem, we compare the penalty method proposed in this paper \cref{BEMform_Dir} to the standard single layer formulation:
Find $\lambda\in \Lambda_h$ such that
\begin{align}\label{usual_Dir}
\left\langle\bopV \lambda,\mu\right\rangle&=
\left\langle(\tfrac12\bopI+\bopK) g\D,\mu\right\rangle
&&\forall\mu\in \Lambda_h.
\end{align}

\newcommand{\errorlabel}[1]{$\Bnorm[#1]{(u-u_h,\lambda-\lambda_h)}$}

\begin{figure}
\centering
\begin{tikzpicture}
\begin{axis}[small,axis equal,axis on top,
             xmin=0.011,xmax=1.9,xlabel={$h$},x dir=reverse,xmode=log,
             ylabel near ticks,ylabel={\errorlabel{\D}},ymode=log
]
\referenceplot
table {%
0.5 4.5
0.0221 0.0087890625
};
\usualoneplot table {img/data/linlin/diri1usual_convergence.dat};
\Dlinlinplot table {img/data/linlin/diri1_convergence.dat};
\end{axis}
\end{tikzpicture}
\hfill
\begin{tikzpicture}
\begin{axis}[small,axis on top,
             xlabel={$h$},x dir=reverse,xmode=log,
             ylabel near ticks,ylabel={Number of GMRES iterations},ymin=0,ylabel near ticks,ymax=100
]
\usualoneplot table {img/data/linlin/diri1usual_iterations.dat};
\Dlinlinplot table {img/data/linlin/diri1_iterations.dat};
\Dtwoplot table {img/data/linlin/diri1nop_iterations.dat};
\end{axis}
\end{tikzpicture}
\caption{The convergence (left) and GMRES iteration counts (right) of the penalty method with $\betaparam\D=0.1$ (\Dlinlindesc)
compared to the standard single layer method \cref{usual_Dir} (\usualonedesc),
for the Dirichlet problem on the unit sphere, with $k=l=1$.
The iteration count plot shows the number of iterations taken to solve the mass matrix preconditioned system (\Dlinlindesc)
and the non-preconditioned system (\Dtwodesc).
The dashed line shows order 2 convergence.}
\label{fig:lin-diri1}
\end{figure}

\Cref{fig:lin-diri1} shows the convergence and iteration counts when $\betaparam\D=0.1$ and $k=l=1$, and so we look for
$(u_h,\lambda_h)\in\Vee_h^1\times\tilde\Lambda_h^1$.
We note that as $h$ decreases, $h^{-1}$ increases, so $0.1\lesssim h^{-1}$.
In this case, $\Gamma$ is smooth, and so $\Vee_h^1=\tilde\Lambda_h^1$.
The iteration count plot (right) shows the number of iterations taken to solve the non-preconditioned system (\Dtwodesc), compared with the system
with mass matrix preconditioned applied blockwise from the left (\Dlinlindesc), as described in \cite{Betcke17}.
Mass matrix preconditioning greatly reduces the number of iterations required, 
so for the remainder of this paper, we precondition all linear systems using mass matrix preconditioning.

For larger and more complex geometries, however, more specialised preconditioners are required.
With systems of boundary element equations, it is common to use operator preconditioning or Calder\'on preconditioning \cite{BC}, where
properties of the boundary operators at the continuous level are used to derive a preconditioned equation of a form known to be well conditioned.
In our case, it is not clear how to apply this approach, although further investigation of this warrants future work.

An alternative avenue of investigation leads to hierarchical LU based preconditioners, or even direct solvers of this type \cite{Bebend2005}.
The penalty terms in this paper are all sparse matrices that have non-zero entries only for neighbouring triangles, and so adding these
terms only affects the entries in the matrix arising from near interactions; the far interactions---which are exactly those that are approximated
in a hierarchical matrix compression---are not affected by these terms. Therefore H-matrix methodst can be applied to this method
with few algorithmic changes required.

\Cref{fig:lin-diriA} shows the dependence of the error and iteration count on the chosen value of $\betaparam\D$, for a range of values of $h$.
It can be seen that the number of iterations increases when $\betaparam\D$ is above around 0.1, and the error increases when $\betaparam\D$ is
above 100. This motivates our earlier choice of $0.1$ as the value of $\betaparam\D$, although anything smaller than this
appears to be a good choice of $\betaparam\D$.

In \cref{fig:lin-diri1}, it can be seen that the penalty method proposed here gives comparable convergence to the standard method in a similar
number of iterations. However, the system in the penalty method contains around twice the number of unknowns, and so each iteration will be more
expensive.

Additionally, the discrete systems for the penalty method are non-symmetric, so are solved using GMRES \cite{gmres}. The discrete systems for the
standard method \cref{usual_Dir} are symmetric, so CG \cite{cg} or MINRES \cite{minres} could be used: these methods are typically less expensive
than GMRES, so this is a further disadvantage of the penalty method for pure Dirichlet and Neumann problems and justifies our focus on more complex
boundary conditions.


\begin{figure}
\centering
\begin{tikzpicture}
\begin{axis}[small,axis on top,
    xlabel={$\betaparam\D$},xmin=1e-6*1.0001, xmax=1e6/1.0001,xmode=log,
    ylabel near ticks,ylabel={\errorlabel{\D}},ymax=100,ymode=log
]
\Doneplot table {img/data/linlin/diriA_convergence_4.dat};
\Dtwoplot table {img/data/linlin/diriA_convergence_7.dat};
\Dthreeplot table {img/data/linlin/diriA_convergence_10.dat};
\end{axis}
\end{tikzpicture}
\hfill
\begin{tikzpicture}

\begin{axis}[small,axis on top,
             xlabel={$\betaparam\D$},xmin=1e-6*1.0001, xmax=1e6/1.0001,xmode=log,
             ylabel near ticks,ylabel={Number of GMRES iterations},ymin=0,ymax=40
]
\Doneplot table {img/data/linlin/diriA_iterations_4.dat};
\Dtwoplot table {img/data/linlin/diriA_iterations_7.dat};
\Dthreeplot table {img/data/linlin/diriA_iterations_10.dat};
\end{axis}
\end{tikzpicture}
\caption{The dependence of the error (left) and iteration count (right) on the value of $\betaparam\D$ for
$h=2^{-2}$ (\Donedesc),
$h=2^{-3.5}$ (\Dtwodesc), and
$h=2^{-5}$ (\Dthreedesc),
for the Dirichlet problem on the unit sphere, with $k=l=1$.}
\label{fig:lin-diriA}
\end{figure}
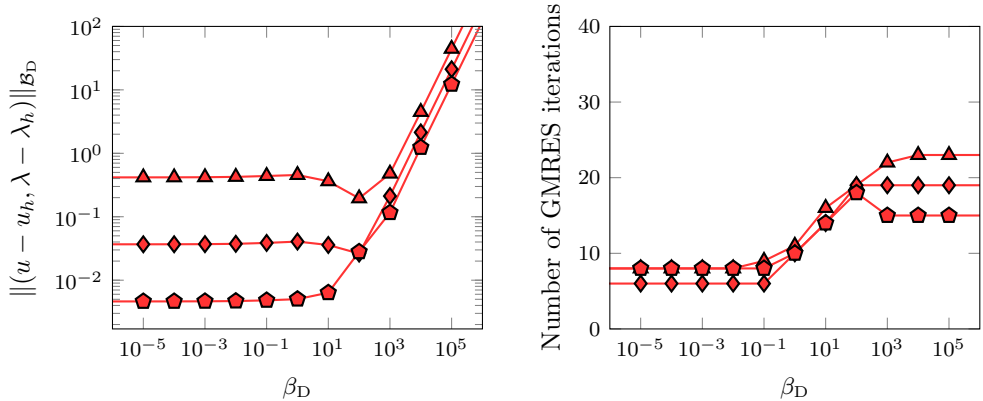

\subsection{Mixed Dirichlet--Neumann boundary conditions}
\label{sec:dn_numerical}
We now consider the case where $\Gamma=\Gamma\D\cup\Gamma\N$ and the problem reduces to a mixed Dirichlet--Neumann problem:
\begin{subequations}
\begin{align}
-\Delta u &=0       &&\text{in } \Omega,\\
u &= g\D              &&\text{on } \Gamma\D, \\
\frac{\partial u}{\partial\B\nu} &= g\N
                    &&\text{on } \Gamma\N.
\end{align}\end{subequations}
Let $\Gamma\N:=\{(x,y,z)\in\Gamma:x>0\}$ and $\Gamma\D:=\Gamma\setminus\Gamma\N$.
We use the same $g\D$ and $g\N$ as above.

We compare the method proposed in this paper with the standard method for mixed Dirichlet--Neumann problems \cite[equation (3.2)]{Stephan1987}:
Find $(u,\lambda)\in \tilde{H}^{1/2}(\Gamma\N)\times \tilde{H}^{-1/2}(\Gamma\D)$ such that
\begin{multline}\label{classicalformulation2}
\left\langle\bopW_{\text{N}\text{N}}u,v\right\rangle
+\left\langle\bopKadj_{\text{D}\text{N}},v\right\rangle
-\left\langle\bopK_{\text{N}\text{D}}u,\mu\right\rangle
+\left\langle\bopV_{\text{D}\text{D}}\lambda,\mu\right\rangle
\\=
-\left\langle\bopW_{\text{D}\text{N}}g\D,v\right\rangle
+\left\langle\left(\tfrac12\bopI-\bopKadj_{\text{N}\text{N}}\right)g\N,v\right\rangle
+\left\langle\left(\tfrac12\bopI+\bopK_{\text{D}\text{D}}\right)g\D,\mu\right\rangle
-\left\langle\bopV_{\text{N}\text{D}},\mu\right\rangle
\\
\forall(v,\mu)\in \tilde{H}^{1/2}(\Gamma\N)\times \tilde{H}^{-1/2}(\Gamma\D),
\end{multline}
where for a given boundary operator $\bop{B}$, $\bop{B}_{ij}$ is the corresponding boundary operator with the integral taken over $\Gamma_i$
and the point $\x\in\Gamma_j$. For example, $\bop{V}_{\text{N}\text{D}}$ is defined by
\begin{align}
[\bopV_{\text{N}\text{D}}f](\vec{x})&:=\int_{\Gamma\N}f(\vec{y})G(\x,\y)\dx[\vec{y}]
&&\text{for }\vec{x}\in\Gamma\D.
\end{align}

We first let $k=l+1=1$, and so look for 
$(u_h,\lambda_h)\in\Vee_h^1\times\Lambda_h^0$. As motivated above by \cref{dn:approx}, we set 
$\betaparam\D=\betaparam h^{-1}$
and
$\betaparam\N=\betaparam h$,
where $\betaparam$ is a constant.
The dependence of the error and iteration count on $\betaparam$ is shown in \cref{fig:con-neudiriA}.
We observe that $\betaparam=0.01$ is a good choice, as this gives a small error and iteration count.

The convergence of the error as we reduce $h$ is shown in \cref{fig:con-neudiri1}. Here we observe order 1.5 convergence, and
the same rate of convergence as the standard method \cref{classicalformulation2}, with a marginally lower error in the standard method.
The iteration count for the penalty method increases more gradually than the standard method, although this issue could be removed through better preconditioning of the standard
method.

\begin{figure}
\centering
\begin{tikzpicture}
\begin{axis}[small,axis on top,
             xlabel={$\betaparam$},xmin=1e-6*1.0001, xmax=1e6/1.0001,xmode=log,
             ylabel near ticks,ylabel={\errorlabel{\ND}},ymax=100,ymode=log
]
\NDoneplot table {img/data/conlin/neudiriA_convergence_4.dat};
\NDtwoplot table {img/data/conlin/neudiriA_convergence_7.dat};
\NDthreeplot table {img/data/conlin/neudiriA_convergence_10.dat};
\end{axis}
\end{tikzpicture}
\hfill
\begin{tikzpicture}

\begin{axis}[small,axis on top,
             xlabel={$\betaparam$},xmin=1e-6*1.0001, xmax=1e6/1.0001,xmode=log,
             ylabel near ticks,ylabel={Number of GMRES iterations},ymin=0,ymax=350
]
\NDoneplot table {img/data/conlin/neudiriA_iterations_4.dat};
\NDtwoplot table {img/data/conlin/neudiriA_iterations_7.dat};
\NDthreeplot table {img/data/conlin/neudiriA_iterations_10.dat};
\end{axis}
\end{tikzpicture}
\caption{The dependence of the error (left) and iteration count (right) on the value of $\betaparam$ for
$h=2^{-2}$ (\NDonedesc),
$h=2^{-3.5}$ (\NDtwodesc), and
$h=2^{-5}$ (\NDthreedesc),
for the mixed Dirichlet--Neumann problem on the unit sphere, with $k=l+1=1$.
Here we use $\betaparam\D=\betaparam h^{-1}$ and $\betaparam\N=\betaparam h$.
}
\label{fig:con-neudiriA}
\end{figure}
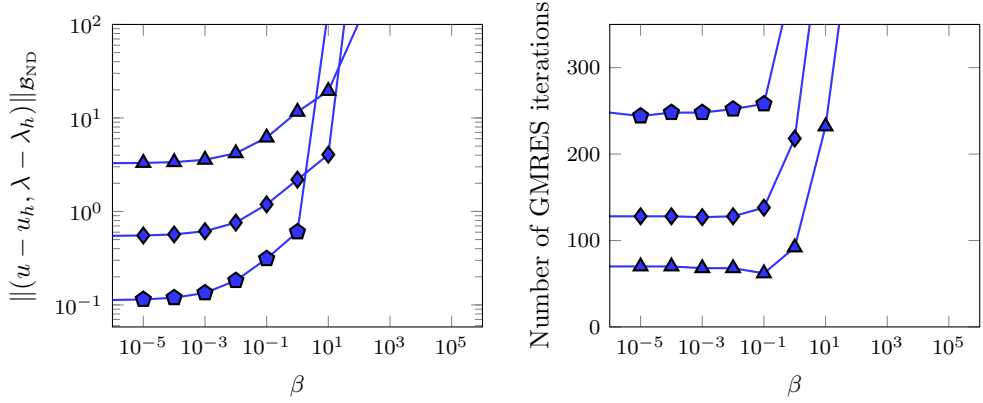

\begin{figure}
\centering
\begin{tikzpicture}
\begin{axis}[small,axis equal,axis on top,
             xlabel={$h$},xmode=log,x dir=reverse,
             ylabel near ticks,ylabel={\errorlabel{\ND}},ymode=log
]
\referenceplot table {%
0.4 32.925
0.02 0.3681126907959028
};
\usualoneplot table {img/mixed/conlin/hyp_convergence.dat};
\NDconlinplot table {img/data/conlin/neudiri1_convergence.dat};
\end{axis}
\end{tikzpicture}
\hfill
\begin{tikzpicture}
\begin{axis}[small,axis on top,
    xlabel={$h$},xmode=log,x dir=reverse,
    ylabel near ticks,ylabel={Number of GMRES iterations},ymin=0,ymax=350
]
\usualoneplot table {img/mixed/conlin/hyp_iterations.dat};
\NDconlinplot table {img/data/conlin/neudiri1_iterations.dat};
\end{axis}
\end{tikzpicture}
\caption{The convergence (left) and iterations counts (right) of the penalty method with $\betaparam=0.01$ (\NDconlindesc)
compared to the standard method \cref{classicalformulation2} (\usualonedesc),
for the mixed Dirichlet--Neumann problem on the unit sphere, with $k=l+1=1$.
The dashed line shows order 1.5 convergence.
Here we use $\betaparam\D=\betaparam h^{-1}$ and $\betaparam\N=\betaparam h$.
}
\label{fig:con-neudiri1}
\end{figure}
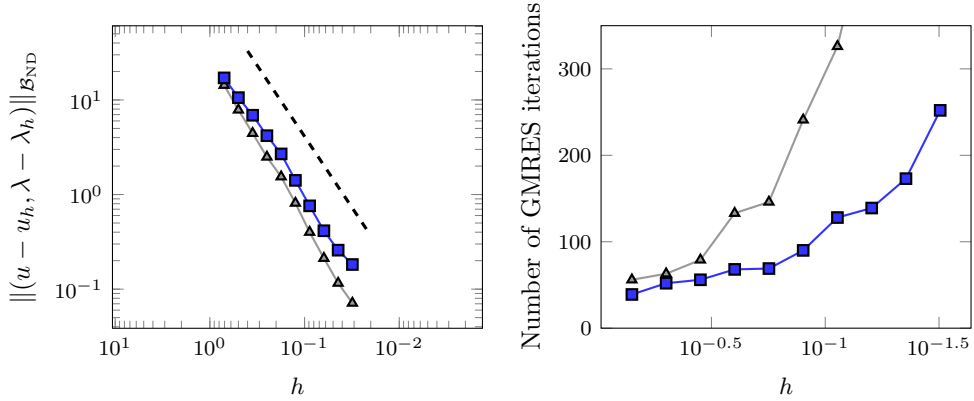

We next consider the case where $k=l=1$. In this case, as remarked in \cref{sec:bem_mixed_dn}, we may replace
the bound on $\betaparam\N$ by $\betaparam\N\lesssim h^{-1}$, and so we may take both $\betaparam\D$ and $\betaparam\N$ to be constant:
we set $\betaparam\D=\betaparam\N=\betaparam$. The dependence of the error and iteration count on $\betaparam$ for this choice of
parameters is shown in \cref{fig:lin-neudiriA}.


The convergence to the solution when $\betaparam=0.01$ is shown in \cref{fig:lin-neudiri1}.
It can be seen here that order 2 convergence is observed, higher than the expected order 1.5 convergence.
In this case, the standard method \cref{classicalformulation2} only achieves order 1 convergence, with a much higher iteration
count that the penalty method. For this choice of discrete spaces, we also compared the our method with the formulation
given in \cite[equation (1.19)]{Costabel1985}: this formulation is better conditioned than \cref{classicalformulation2} but still achieves
only order 1 convergence.

\begin{figure}
\centering
\begin{tikzpicture}
\begin{axis}[small,axis on top,
             xlabel={$\betaparam$},xmin=1e-6*1.0001, xmax=1e6/1.0001,xmode=log,
             ylabel near ticks,ylabel={\errorlabel{\ND}},ymax=10,ymode=log
]
\NDoneplot table {img/data/linlin/neudiriA_convergence_4.dat};
\NDtwoplot table {img/data/linlin/neudiriA_convergence_7.dat};
\NDthreeplot table {img/data/linlin/neudiriA_convergence_10.dat};
\end{axis}
\end{tikzpicture}
\hfill
\begin{tikzpicture}

\begin{axis}[small,axis on top,
             xlabel={$\betaparam$},xmin=1e-6*1.0001, xmax=1e6/1.0001,xmode=log,
             ylabel near ticks,ylabel={Number of GMRES iterations},ymin=0,ymax=100
]
\NDoneplot table {img/data/linlin/neudiriA_iterations_4.dat};
\NDtwoplot table {img/data/linlin/neudiriA_iterations_7.dat};
\NDthreeplot table {img/data/linlin/neudiriA_iterations_10.dat};
\end{axis}
\end{tikzpicture}
\caption{The dependence of the error (left) and iteration count (right) on the value of $\betaparam$ for
$h=2^{-2}$ (\NDonedesc),
$h=2^{-3.5}$ (\NDtwodesc), and
$h=2^{-5}$ (\NDthreedesc),
for the mixed Dirichlet--Neumann problem on the unit sphere, with $k=l=1$.
Here we use $\betaparam\D=\betaparam\N=\betaparam$.
}
\label{fig:lin-neudiriA}
\end{figure}
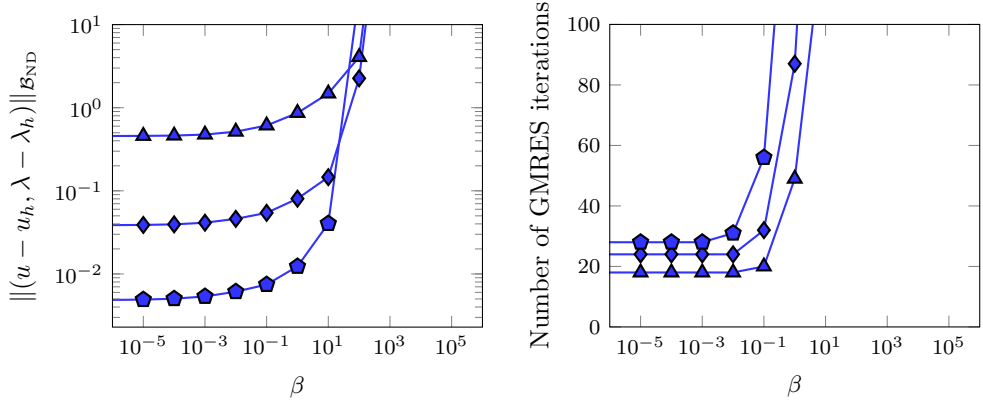

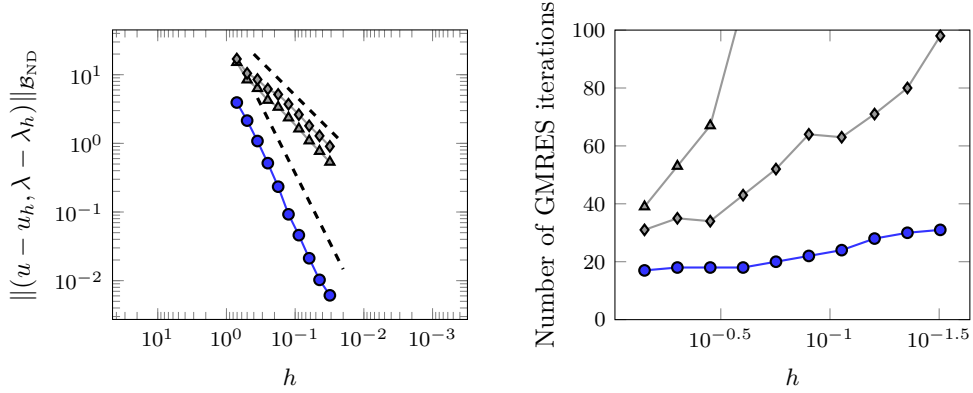
\begin{figure}
\centering
\begin{tikzpicture}
\begin{axis}[small,axis equal,axis on top,width=63mm,
             xlabel={$h$},xmode=log,x dir=reverse,
             ylabel near ticks,ylabel={\errorlabel{\ND}},ymode=log
]
\referenceplot table {%
0.4 20
0.02 1
};
\referenceplot table {%
0.35355 4.585
0.02 0.014672281414357528
};
\usualoneplot table {img/mixed/linlin/hyp_convergence.dat};
\usualtwoplot table {img/mixed/linlin/noh_convergence.dat};
\NDlinlinplot table {img/data/linlin/neudiri1_convergence.dat};
\end{axis}
\end{tikzpicture}
\hfill
\begin{tikzpicture}
\begin{axis}[small,axis on top,width=63mm,
    xlabel={$h$},xmode=log,x dir=reverse,
    ylabel near ticks,ylabel={Number of GMRES iterations},ymin=0,ymax=100
]
\usualoneplot table {img/mixed/linlin/hyp_iterations.dat};
\usualtwoplot table {img/mixed/linlin/noh_iterations.dat};
\NDlinlinplot table {img/data/linlin/neudiri1_iterations.dat};
\end{axis}
\end{tikzpicture}
\caption{The convergence (left) and iterations counts (right) of the penalty method with $\betaparam=0.01$ (\NDlinlindesc)
compared to the standard method \cref{classicalformulation2} (\usualonedesc) and the method given in \cite[equation (1.19)]{Costabel1985} (\usualtwodesc),
for the mixed Dirichlet--Neumann problem on the unit sphere, with $k=l=1$.
The dashed lines show order 2 and order 1 convergence.
Here we use $\betaparam\D=\betaparam\N=\betaparam$.
}
\label{fig:lin-neudiri1}
\end{figure}

In \cref{fig:con-neudiriA,fig:lin-neudiri1}, the error and iteration count remain steady as $\betaparam\to0$.
In numerical experiments on a sphere and cube with $\betaparam=0$, we see similar convergence to that observed in this section.
This leads us to conjecture that \cref{main_dn_result} will hold for the penalty-free formulation, when $\betaparam=0$.

\subsection{Robin problem}\label{sec:robin_num}
We now consider the case where $\Gamma=\Gamma\R$ and the problem reduces to a Robin problem:
\begin{subequations}
\begin{align}
-\Delta u &=0       &&\text{in } \Omega,\\
\frac{\partial u}{\partial\B\nu} &= \frac{1}{\param} (u - g\D) + g\N
                    &&\text{on }\Gamma,
\end{align}\end{subequations}
for some $\param\in\RR$.

In this section, we compare the method proposed in this paper with the standard method: Find $\lambda\in H^{-1/2}(\Gamma)$ such that
\begin{align}\label{classicalRobin}
\left\langle\bopW u,v\right\rangle+\left\langle\frac1\param\left(\tfrac12\bopI-\bopKadj\right)u,v\right\rangle
&=\left\langle\left(\tfrac12\bopI-\bopKadj\right)\left(\frac1\param g\D+g\N\right),v\right\rangle&&\forall\mu\in H^{-1/2}(\Gamma).
\end{align}

\begin{figure}
\centering
\begin{tikzpicture}
\begin{axis}[height=58mm,width=58mm,
             xlabel={$\betaparam$},xmode=log,
             ylabel={$\param$},ymode=log,
             zlabel={\errorlabel{\R}},zmode=log,
             zmin=0.1,zmax=100000
]
\RthreeD table {img/data/conlin/robin3d_error.dat};
\end{axis}
\end{tikzpicture}
\hfill
\begin{tikzpicture}
\begin{axis}[height=58mm,width=58mm,
             xlabel={$\betaparam$},xmode=log,
             ylabel={$\param$},ymode=log,
             zlabel={Number of GMRES iterations},zmin=0,zmax=200
]
\RthreeD table {img/data/conlin/robin3d_iterations_capped.dat};
\end{axis}
\end{tikzpicture}
\caption{The dependence of the error on $\param$ and $\betaparam$
for the Robin problem on the unit sphere with $h=0.1$, with $k=l+1=1$.
Here we use $\betaparam\D=\betaparam h^{-1}$ and $\betaparam\N=\betaparam h$.
}
\label{fig:con-robin3d}
\end{figure}

Again, we begin letting $k=l+1=1$.
Here we use 
\begin{equation*}
\betaparam\R:=\frac{\param\betaparam\N+\betaparam\D}{\param+1},
\end{equation*}
where $\betaparam\D=\betaparam h^{-1}$
and $\betaparam\N=\betaparam h$, for some constant $\betaparam$, as in the mixed Dirichlet--Neumann case.

The dependence of the error and iteration count on both $\param$ and $\betaparam$, on a grid with $h=0.1$, is shown in \cref{fig:con-robin3d}.
The convergence as $h$ is reduced
for $\param=\frac1{300}$, $\param=1$, and $\param=300$, and using $\betaparam=0.01$,
is shown in \cref{fig:con-robin1}. In this case, order 1.5 convergence is observed.

\begin{figure}
\centering
\begin{tikzpicture}

\begin{axis}[small,axis equal,axis on top,
             xlabel={$h$},xmode=log,x dir=reverse,
             ylabel near ticks,ylabel={\errorlabel{\R}},ymode=log,
]
\referenceplot table {%
0.4 32.925
0.02 0.3681126907959028
};
\Roneplot table {img/data/conlin/robin1_convergence_DD.dat};
\Rtwoplot table {img/data/conlin/robin1_convergence_DN.dat};
\Rthreeplot table {img/data/conlin/robin1_convergence_NN.dat};
\end{axis}
\end{tikzpicture}
\hfill
\begin{tikzpicture}
\begin{axis}[small,axis on top,
             xlabel={$h$},x dir=reverse,xmode=log,
             ylabel near ticks,ylabel={Number of GMRES iterations},ymin=0,ymax=100
]
\Roneplot table {img/data/conlin/robin1_iterations_DD.dat};
\Rtwoplot table {img/data/conlin/robin1_iterations_DN.dat};
\Rthreeplot table {img/data/conlin/robin1_iterations_NN.dat};
\end{axis}
\end{tikzpicture}

\caption{The convergence (left) and iteration counts (right) of the penalty method for the Robin problem with
$\param=300$ (\Ronedesc),
$\param=1$ (\Rtwodesc) and
$\param=1/300$ (\Rthreedesc)
on the unit sphere, using $k=l+1=1$ and $\betaparam=0.01$.
The dashed line shows order 1.5 convergence.
Here we use $\betaparam\D=\betaparam h^{-1}$ and $\betaparam\N=\betaparam h$.}
\label{fig:con-robin1}
\end{figure}
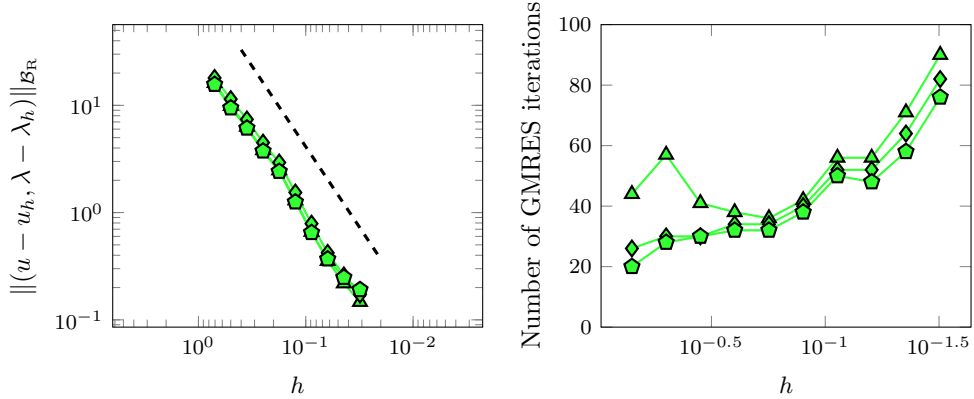

As in the mixed Dirichlet--Neumann case, when $k=l=1$, we may replace the bound on $\betaparam\N$ with $\betaparam\N\lesssim h^{-1}$.
Again, we take $\betaparam\D=\betaparam\N=\betaparam$ for some constant $\betaparam$. The dependence of the error
and iteration count on both $\betaparam$ and $\param$ is shown in \cref{fig:lin-robin3d}. As in the previous case, $\betaparam=0.01$ looks to
be a suitable choice for the parameter.

The convergence as we reduce $h$
for $\param=\frac1{300}$, $\param=1$, and $\param=300$, and using $\betaparam=0.01$,
is shown in \cref{fig:lin-robin1}. In this case, order 2 convergence is observed. For the standard method \cref{classicalRobin}, the
same order of convergence and errors of almost identical size are oberved. For the standard method, the number of iterations required to
solve the system is higher for smaller values of $\param$; for the penalty method, the number of iterations is less affected by the value of
$\param$, leading to lower iteration counts than the standard method for small values of $\param$.

\begin{figure}
\centering
\begin{tikzpicture}
\begin{axis}[height=58mm,width=58mm,
             xlabel={$\betaparam$},xmode=log,
             ylabel={$\param$},ymode=log,
             zlabel={\errorlabel{\R}},zmode=log,
             zmin=0.01,zmax=100000
]
\RthreeD table {img/data/linlin/robin3d_error.dat};
\end{axis}
\end{tikzpicture}
\hfill
\begin{tikzpicture}
\begin{axis}[height=58mm,width=58mm,
             xlabel={$\betaparam$},xmode=log,
             ylabel={$\param$},ymode=log,
             zlabel={Number of GMRES iterations},zmin=0,zmax=200
]
\RthreeD table {img/data/linlin/robin3d_iterations_capped.dat};
\end{axis}
\end{tikzpicture}
\caption{The dependence of the error on $\param$ and $\betaparam$
for the Robin problem on the unit sphere with $h=0.1$, with $k=l=1$.
Here we use $\betaparam\D=\betaparam\N=\betaparam$.
}
\label{fig:lin-robin3d}
\end{figure}

\begin{figure}
\centering
\begin{tikzpicture}

\begin{axis}[small,axis equal,axis on top,
             xlabel={$h$},xmode=log,x dir=reverse,
             ylabel near ticks,ylabel={\errorlabel{\R}},ymode=log,
]
\referenceplot
table {%
0.35355 4
0.02 0.01280024550870886
};
\input{img/robin/robin_c}
\Roneplot table {img/data/linlin/robin1_convergence_DD.dat};
\Rtwoplot table {img/data/linlin/robin1_convergence_DN.dat};
\Rthreeplot table {img/data/linlin/robin1_convergence_NN.dat};
\end{axis}
\end{tikzpicture}
\hfill
\begin{tikzpicture}
\begin{axis}[small,axis on top,
             xlabel={$h$},x dir=reverse,xmode=log,
             ylabel near ticks,ylabel={Number of GMRES iterations},ymin=0,ymax=40
]
\input{img/robin/robin_i}
\Roneplot table {img/data/linlin/robin1_iterations_DD.dat};
\Rtwoplot table {img/data/linlin/robin1_iterations_DN.dat};
\Rthreeplot table {img/data/linlin/robin1_iterations_NN.dat};
\end{axis}
\end{tikzpicture}
\caption{The convergence (left) and iteration counts (right) of the penalty method (\Rcolordesc)
compared to the standard method \cref{classicalRobin} (\usualvariantcolordesc),
for the Robin problem with
$\param=300$ (\onedesc),
$\param=1$ (\twodesc) and
$\param=1/300$ (\threedesc)
on the unit sphere, using $k=l=1$ and $\betaparam=0.01$.
The dashed line shows order 2 convergence.
Here we use $\betaparam\D=\betaparam\N=\betaparam$.
}
\label{fig:lin-robin1}
\end{figure}

Again, we could consider the penalty-free formulation for the Robin problem. However, \cref{fig:con-robin3d,fig:lin-robin3d} suggest that as
$\betaparam\to0$, the error increases for some values of $\param$. This increased error can also be observed in the numerical
experiments we have run with $\betaparam=0$. Hence in the Robin case, the penalty term is necessary and \cref{main_robin_result} does not hold
for $\betaparam\R=0$.


\section{Conclusions} We have analysed and demonstrated the effectiveness of Nitsche type coupling methods for 
boundary element formulations. In particular, for Robin and mixed Neumann/Dirichlet boundary conditions these 
are simpler than the strong imposition of boundary conditions since the boundary condition only enters the 
equations through a sparse operator.

An open problem is preconditioning. While the iteration counts in the presented examples were already 
practically useful, for large and complex structures preconditioning is still essential. The hope is to use 
the properties of the Calder\'on projector to build effective operator preconditioning techniques for the 
presented Nitsche type frameworks.

An extension of the presented method to FEM/BEM formulations is currently in preparation. Other directions 
are the Helmholtz and Maxwell problems. Although the analysis for these cases is more involved, we expect that 
their implementation will be structurally similar to the presented Laplace case.

\textbf{Acknowledgements.} Erik Burman and Timo Betcke were supported by EPSRC
grants EP/P01576X/1 and EP/P012434/1.
\bibliographystyle{siamplain}
\bibliography{references}
\end{document}